\newtheorem{theorem}{Theorem}[section]
\newtheorem{lemma}[theorem]{Lemma}
\newtheorem{corollary}[theorem]{Corollary}
\newtheorem{proposition}[theorem]{Proposition}
\theoremstyle{definition}
\newtheorem{definition}[theorem]{Definition}
\theoremstyle{remark}
\newtheorem{remark}[theorem]{Remark}
\theoremstyle{keyobservation}
\newtheorem{keyreductions}[theorem]{Key Reductions and the Basic strategy}
\newtheorem{shypothesis}[theorem]{Standing Hypothesis}
\numberwithin{equation}{section}
\numberwithin{equation}{subsection}
\newcommand{\be}%
  {\protect\setcounter{equation}{\value{subsubsection}}}  
  \newcommand{\ee}%
   {\protect\setcounter{subsubsection}{\value{equation}}}
\numberwithin{equation}{subsection}
\newcommand{\GL}{\operatorname{GL}}
\newcommand{\Spec}{\operatorname{Spec}}
\newcommand{\Perf}{\operatorname{Perf}}
\newcommand{\Pscoh}{\operatorname{Pscoh}}
\newcommand{\bG}{\mathbf G}
\newcommand{\cE}{\mathcal E}
\newcommand{\C}{\rm C}
\newcommand{\cX}{\mathcal X}
\newcommand{\codim}{{\rm codim}}
\newcommand{\bK}{\mathbf K}
\renewcommand{\L}{\mathcal L}
\newcommand{\Z}{\mathcal Z}
\renewcommand{\O}{\mathcal{O}}
\def\displaytimes_#1{\mathrel{\mathop{\times}\limits_{#1}}}
\def\displayotimes_#1{\mathrel{\mathop{\bigotimes}\limits_{#1}}}
 \def\ari[#1]{\ar@{^(->}[#1]}
 \def\are[#1]{\ar[#1]^{\txt{\'et}}}
 \def\areh[#1]{\ar[#1]|{\txt{$H$-eq}}^{\txt{\'et}}}
 \def\ars[#1]{\ar@{->>}[#1]}
 \newcommand{\dplus}{\ar@{}[d]|{\mbox{$\oplus$}}}
 \newcommand{\dtimes}{\ar@{}[d]|{\mbox{$\times$}}}
\def \rmA{\rm A}
\def \rmB{\rm B}
\def \C{\mathcal C}
\def \Cl{\mathbb C}
\def \colimm{\underset {m \rightarrow \infty}  {\hbox {lim}}}
\def \colimK.{\underset {\underset K^.  \rightarrow}  {\hbox {lim}}}
\def \colimU.{\underset {\underset U_.  \rightarrow}  {\hbox {lim}}}
\def \cosimp1{\stackrel{\rightrightarrows}{ \leftarrow}}
\def \compl{\, \, {\widehat {}}}
\def \rmE{\rm E}
\def \EG1{E{(G \times {\mathbb C}^*)}{\underset {G\times {\mathbb C}^*} \to \times}}
\def \EZ(s)1{E{(Z(s) \times {\mathbb C}^*)}{\underset {(Z(s)\times {\mathbb C}^*)} \to \times}}
\newcommand{\eps}{\boldsymbol\varepsilon}
\def \eps{\ \epsilon \ }
\def \EM(u){EM(u){\underset {M(u)} \to \times}}
\def \EM(us){EM(u,s){\underset {M(u, s)} \to \times}}
\def \bG{{\mathbf G}}
\def \rmG{\rm G}
\def\holimD{\mathop{\textrm{holim}}\limits_{\Delta }}
\def\holimDm{\mathop{\textrm{holim}}\limits_{\Delta_{\le m} }}
\def\hlimD2{\mathop{\textrm{holim}}\limits_{\Delta_{\le m_2} }}
\def \hocolimD{\underset \Delta  {\hbox {hocolim}}}
\def \holimn {\underset {\infty \leftarrow {\it n}}  {\hbox {holim}}}
\def\holim{\mathop{\textrm{holim}}}
\def \H{\mathbb H}
\def \rmH{\rm H}
\def \holimm {\underset {\infty \leftarrow m}  {\hbox {holim}}}
\def \invlim1{\underset {\infty \leftarrow q} \to {\hbox {lim}}^1}
\def \rmI{\rm I}
\def \rmK{\rm K}
\def \k{\it k}
\def \bKH{\bf KH}
\def \L3{\Lambda \times \Lambda \times \Lambda}
\def \L2{\Lambda \times \Lambda}
\def \longright2arrow{{\overset \longrightarrow \to {\overset {} \to \longrightarrow}}}
\def \L{L\times \Cl ^*}
\def \rmMap{\rm {Map}}
\def \N{\mathbb N}
\def \O{{\mathcal O}}
\def \rmp{\rm p}
\def \rmq{\rm q}
\def \ra{\rightarrow}
\def \Ra{\Rightarrow}
\def \RG^{R(G)^{\hat {}}\ }
\def \res{respectively}
\def \rmR{\rm R}
\def \rmS{\rm S}
\def \S{\mathcal S}
\def \Speck{{\rm Spec}\, {\it k}}
\def\Spt{\rm {\bf Spt}}
\def \mbS{\mathbb S}
\def \topGcoh*{^{top, *} _{G}}
\def \topGho*{ _{top,*} ^{G}}
\def \rmT{\rm T}
\def \rmU{\rm U}
\def \rmV{\rm V}
\def \wedgeKH{\overset L {\underset {\bK (\rmS, G)} \wedge}}
\def \wedgeKG{\overset L {\underset {\bK (\rmS, G)} \wedge}}
\def \rmW{\rm W}
\def \rmX{\rm X}
\def \cX{\mathcal X}
\def \rmY{\rm Y}
\def \cY{\mathcal Y}
\def \Z(s){Z(s) \times {\mathbb C}^*}
\def \Z{\mathbb Z}
\def \rmZ{\rm Z}
\begin{document}

\title{Equivariant Algebraic K-Theory and Derived completions II: the case of Equivariant Homotopy K-Theory and Equivariant K-Theory}
\author{Gunnar Carlsson}
\address{Department of Mathematics, Stanford University, Building 380, Stanford,
California 94305}
\email{gunnar@math.stanford.edu}
\thanks{  }  
\author{Roy Joshua}
\address{Department of Mathematics, Ohio State University, Columbus, Ohio,
43210, USA.}
\email{joshua.1@math.osu.edu}
\author{Pablo Pelaez}
\address{Instituto de Matem\'aticas, Ciudad Universitaria, UNAM, DF 04510, M\'exico.}
\email{pablo.pelaez@im.unam.mx}


\thanks{AMS Subject classification: 19E08, 14C35, 14L30. The second and third authors would like to thank the Isaac Newton Institute for 
Mathematical Sciences, Cambridge, for support and hospitality during the programme {\it K-Theory, Algebraic Cycles and Motivic Homotopy Theory} 
where work on this paper began. This work was supported by EPSRC grant no EP/R014604/1. The second author was also supported by a grant from the Simons Foundation.}

\maketitle
 
\begin{abstract}
\vskip .1cm
In the mid 1980s, while working on establishing completion theorems for equivariant Algebraic K-Theory similar to the
well-known 
completion theorems for equivariant topological K-theory, 
the late Robert Thomason found the
strong finiteness conditions that are required in such theorems to be too restrictive. Then he made a  conjecture on the existence of a
completion theorem 
for
equivariant Algebraic G-theory, for actions of linear algebraic groups on schemes that holds without any of the
strong finiteness conditions that are required in such theorems proven by him. 
In an earlier work by the first two authors, we solved this conjecture by providing
a derived completion theorem for equivariant G-theory. {\it In the present paper, we provide a similar derived completion
theorem for the homotopy Algebraic K-theory of equivariant perfect complexes, on schemes that need
not be regular.}
\vskip .2cm
Our solution is
broad enough to allow actions by all linear algebraic groups, irrespective of whether they are connected or not, and
acting on any {\it normal} quasi-projective scheme of finite type over a field, irrespective of whether they are regular or projective.
This allows us therefore to consider the Equivariant Homotopy Algebraic K-Theory of large classes of varieties like all toric varieties 
(for the action of a torus) and all spherical varieties (for the action of a reductive group). With finite coefficients invertible in the base fields, we are also able to obtain such 
derived completion theorems for equivariant algebraic K-theory but with respect to actions of diagonalizable group schemes. These enable us to obtain a wide range of applications,
several of which are also explored. 
\end{abstract} 
\setcounter{tocdepth}{1}
\tableofcontents\setcounter{tocdepth}{1}
\markboth{Gunnar Carlsson, Roy Joshua and Pablo Pelaez}{Equivariant Algebraic K-Theory and Derived completions II}
\input xypic
\vfill \eject
\section{Introduction}
\vskip .2cm
Recall that the paper \cite{CJ23} by the first two authors applied the technique of derived completion to 
obtain a derived completion theorem for equivariant Algebraic G-theory, which is the algebraic K-theory of the
category of equivariant coherent sheaves on any suitably nice scheme provided with the action of a linear algebraic group or a smooth affine group scheme. When the scheme is regular, the resulting
equivariant G-theory identifies with equivariant K-theory and therefore, in this case, the above derived completion theorem applies 
to provide a derived completion theorem for equivariant Algebraic K-theory. That left open the problem of obtaining a similar derived
completion theorem for equivariant Algebraic K-theory {\it in general}, that is,  the algebraic K-theory of the category of equivariant vector
bundles and/or equivariant perfect complexes on schemes that need not be regular.
\vskip .2cm
The goal of this paper is to address this problem: we extend the derived completion theorem of \cite{CJ23} to 
{\it  equivariant homotopy algebraic K-theory}. Accordingly, the problem we are considering in this paper can be summarized as follows. Let $\rmX$ denote a scheme provided with the action of
a linear algebraic group $\rmG$. Then let $\bK(\rmX, \rmG)$ denote the spectrum associated to the symmetric monoidal
category of $\rmG$-equivariant vector bundles on $\rmX$. Next let ${\rm EG} \ra {\rm BG}$ denote  a principal $\rmG$-bundle with 
${\rm BG}$ the classifying space for $\rmG$ in a certain sense
as made clear later on. Then the pull-back along the projection $p_2: {\rm EG} \times \rmX \ra \rmX$ induces a map of 
spectra $p_2^*: \bK(\rmX, \rmG) \ra \bK({\rm EG} \times \rmX, \rmG) \simeq \bK({\rmE}\rmG\times_{\rmG}\rmX)$, where 
${\rmE}\rmG\times_{\rmG}\rmX$ is the Borel construction. The {\it main goal} of the present paper
is to prove that the map $p_2^*$ becomes a weak-equivalence after a certain derived completion has been performed at the spectrum level
on $\bK(\rmX, \rmG)$, provided one replaces the spectrum of algebraic K-theory by the spectrum of homotopy algebraic K-theory. The basic underlying principle behind our present work is similar to the one for \cite{CJ23}.\footnote{In \cite{CJ23}, we had already provided a detailed comparison with existing completion theorems in the literature: we pointed out there that none of
 them make use of the technique of derived completions, and as a result are all quite restrictive in terms of the range of applications.
We will assume the basic context and the basic terminology of \cite{CJ23}.} \footnote{It may be worthwhile pointing out that as Equivariant algebraic K-theory is not an ${\mathbb A}^1$-invariant in general, 
our methods do not apply to it, but only to Equivariant Homotopy Algebraic K-theory, which is an ${\mathbb A}^1$-invariant. 
This will become apparent, as at several points along the course of this paper, we will need to strongly make use of the
 fact that the form of Equivariant Algebraic K-theory we consider is ${\mathbb A}^1$-invariant.}

 \vskip .2cm
In addition we discuss several applications, which are left to the accompanying sequel to this paper: \cite{CJP24}. 
{\it Our main results will be stated only under the following basic assumptions.} 
 \begin{shypothesis}
 \label{stand.hyp.1} 
\begin{enumerate}[\rm(i)] {\rm
\item We will assume the base scheme $\rmS$ is the spectrum of a perfect infinite field $k$ of arbitrary characteristic $p$. The schemes we consider will always be assumed to be {\it separated and of finite type over} $\rmS$. 
The group schemes we consider will be smooth
affine group schemes which are finitely presented and separated over $\rmS$: we refer to them as linear algebraic groups over $k$. When we say a group scheme $\rmG$ is affine, we mean that
it admits a closed immersion into some $\GL_{n, \rmS}$, for some integer $n >0$. 
 Observe that any finite group $\rmG$ may be viewed as an affine group-scheme over $\rmS$ in the obvious manner, for example, 
by imbedding it as a subgroup of the monomial matrices in some $\GL _n$. 
\vskip .1cm
\item
We will fix an {\it ambient bigger group}, denoted  $\tilde \rmG$ throughout, and which contains the given linear algebraic group $\rmG$ as a closed sub-group-scheme and 
satisfies the following  strong conditions: $\tilde \rmG$ will denote a connected split reductive group over the field $k$ so that it is {\it special} (in the
 sense of Grothendieck: see \cite{Ch}), and 
 if $\tilde \rmT$ denotes a
maximal torus in $\tilde \rmG$, then $\rmR(\tilde \rmT)$ is free of finite rank over $\rmR(\tilde \rmG)$ and $\rmR(\tilde \rmG)$ is Noetherian. (Here $\rmR(\tilde \rmT)$ and $\rmR(\tilde \rmG)$ denote the
 corresponding representation rings.)
 \vskip .1cm
\item
The above hypothesis is satisfied by 
$\tilde \rmG= {\rm GL}_n$ or ${\rm {SL}}_n$, for any $n$, or any finite product of these groups. It is also trivially satisfied by all split tori.(A basic hypothesis that guarantees this condition is that the algebraic fundamental group 
$\pi_1(\tilde \rmG)$ is 
torsion-free: see section ~\ref{rep.ring}.) 
\vskip .1cm
\item 
In general, we will restrict to {\it normal} quasi-projective schemes of finite type
over $\rmS$,  and provided with an action by a 
linear algebraic group in Theorem ~\ref{main.thm.1}.\footnote{The need to restrict to such schemes comes from
Proposition ~\ref{KH.vanishing}.}
\vskip .1cm
Moreover, in order to consider Riemann-Roch transformations with values in suitable equivariant Borel-Moore homology theories,
we need to  restrict to the following class of schemes:
$\rmX$ is a {\it normal} $\rmG$-scheme over $\rmS$ so that it is {\it either} $\rmG$-quasi-projective (that is, admits a $\rmG$-equivariant locally closed immersion into a projective space over $\rmS$ on which 
$\rmG$-acts linearly), {\it or} $\rmG$ is connected and $\rmX$ is a normal quasi-projective scheme over $\rmS$ (in which case it is
also $\rmG$-quasi-projective by \cite[Theorem 2.5]{SumII}).}
\end{enumerate}
\end{shypothesis}
\vskip .2cm
{\it Next, it seems important
to clarify our basic strategy as discussed in the following key reductions
 ~\ref{remark.validity} as well as Proposition ~\ref{key.obs.2} below: these are essentially the same as those adopted in \cite{CJ23}.}
 \vskip .1cm
\begin{keyreductions}
 \label{remark.validity}
 \begin{enumerate}[\rm(i)] {\rm 
 \vskip .1cm
\item Observe that any linear algebraic group $\rmG$ can be imbedded into $\tilde \rmG$ (as a closed sub-group-scheme), where $\tilde \rmG$ is a general linear group (that is, a $\GL_n$)  or a finite product of such groups.
 Our basic strategy is to show by the following arguments that we may reduce to considering the action of the ambient group $\tilde \rmG$, which will be a finite product of
 $\GL_n$s. Then we reduce to considering actions by a maximal split torus, and eventually to the case of a $1$-dimensional torus.
 \vskip .1cm
Let $\rmX$ denote a scheme as in ~\ref{stand.hyp.1}(iv) and provided with an action by the not-necessarily connected linear algebraic group $\rmG$. We will let $\bK({\rm X}, \rmG)$ ($\bKH({\rm X}, \rmG)$) denote the
K-theory spectrum obtained from the category of $\rmG$-equivariant perfect complexes on $\rmX$: see ~\eqref{KG.def} (the corresponding homotopy K-theory spectrum 
  on $\rmX$: see ~\ref{KH.def}). 
\vskip .1cm
 \item Assume the above situation. Then we let $\tilde \rmG \times \rmG$ act on $\tilde \rmG \times \rmX$ by 
 $(\tilde g_1, g_1)\circ (\tilde g, x)= (\tilde g_1\tilde gg_1^{-1}, g_1x)$, $\tilde g_1, \tilde g \eps \tilde \rmG$, $g_1 \eps \rmG$ and $x \eps \rmX$. Now one may observe that $\tilde \rmG \times \rmG$
 has an induced action on $\tilde \rmG{\underset {\rmG} \times} \rmX$ (defined the same way), and that $\tilde \rmG \times \rmG$ acts on $\rmX$ through the given action of $\rmG $  
 on $\rmX$. (Here $\tilde \rmG{\underset {\rmG} \times }{\rmX}$ denotes the quotient of $\tilde \rmG \times {\rmX}$ by the action of $\rmG$ given by 
$g(\tilde g, x) = (\tilde gg^{-1}, gx)$.) The maps $s: \tilde \rmG \times \rmX \ra \tilde \rmG{\underset {\rmG } \times}\rmX$ and
 $r=pr_2: \tilde \rmG \times \rmX \ra  \rmX$ are $\tilde \rmG\times \rmG$ equivariant maps and the pull-backs
 \be \begin{align}
     \label{key.obs.1}
 s^*:\bK(\tilde \rmG{\underset {\rmG} \times}\rmX, \tilde \rmG) \ra \bK(\tilde \rmG \times \rmX, \tilde \rmG \times \rmG) &\mbox{ and } r^*: \bK(\rmX, \rmG) \ra \bK(\tilde \rmG \times \rmX, \tilde \rmG \times \rmG)\\
 s^*:\bKH(\tilde \rmG{\underset {\rmG} \times}\rmX, \tilde \rmG) \ra \bKH(\tilde \rmG \times \rmX, \tilde \rmG \times \rmG) &\mbox{ and } r^*: \bKH(\rmX, \rmG) \ra \bKH(\tilde \rmG \times \rmX, \tilde \rmG \times \rmG)
\end{align} \ee
 are weak-equivalences of module-spectra over $\bK(\rmS, \tilde \rmG \times \rmG)$. (See Lemma ~\ref{key.pairings} below for more details.)
Moreover, if $\rmX$ is a quasi-projective scheme, then so is $\tilde \rmG{\underset {\rmG} \times}\rmX$. 
These observations, along with Proposition ~\ref{key.obs.2} below,  enable us to just consider the equivariant $\bKH$-theory
with respect to the action of the ambient group $\tilde \rmG$.
\vskip .1cm
\item One may also observe that the induced action by the closed subgroup $\tilde \rmG \times \{1\}$ of $\tilde \rmG \times \rmG$ on $\tilde \rmG{\underset {\rmG} \times} \rmX$
identifies with the left-action by $\tilde \rmG$ on $\tilde \rmG{\underset {\rmG} \times} \rmX$. (Similarly the induced action by the closed subgroup $\rmG \cong \{1\} \times \rmG$ of $\tilde \rmG \times \rmG$
on $\rmX$ identifies with the given action of $\rmG$ on $\rmX$.)
\vskip .1cm
\item If $\rmG$ is a connected split reductive group,
 then $\rmR(\rmG) \cong \rmR(\rmT)^W$, where $\rmW$ denotes the Weyl group of $\rmG$ and $\rmT$ denotes a split maximal torus of $\rmG$. Therefore $\rmR(\rmG)$ is Noetherian, and one may find a closed imbedding $\rmG \ra \GL_{n_1} \times \cdots \GL_{n_m}$ for some $n_1, \cdots, n_m$ such that the
restriction $\rmR(\GL_{n_1} \times \cdots \GL_{n_m}) \ra \rmR(\rmG)$ is surjective, so that \cite[Theorem 1.6]{CJ23} applies. Moreover the same conclusions apply to any linear 
algebraic group $\rmG$ for which $\rmR(\rmG)$ is Noetherian.
\vskip .1cm
\item
In view of these observations, assuming the ambient group $\rmG$ is a finite product of $\GL_n$s is not a serious restriction at all. }
\end{enumerate}
\end{keyreductions}

\vskip .3cm 

We will let  $\rmE\tilde {\rmG}^{gm}$  denote the geometric classifying space
for $\tilde \rmG$ which is constructed in ~\ref{geom.class.space} as an ind-object of schemes. We let 
$\rho_{\rmG}: \bK (\rmS, \rmG) \ra \bK( \rmS)$ denote the map of commutative
ring spectra defined by restriction to the trivial subgroup-scheme. 
 For a prime $\ell \ne p$, let 
$\rho_{\ell}: \mbS \ra \H(\Z/\ell)$ denote the mod$-\ell$ reduction map, where $\mbS$ denotes the sphere spectrum and $\H(\Z/\ell)$ denotes the 
(usual) $\Z/\ell$-Eilenberg-Maclane spectrum. Let
$\rho_{\ell} \circ \rho_{\rmG}: \bK(\rmS, \rmG) \ra \bK(\rmS) {\underset {\mbS} \wedge} \H(\Z/\ell)$ 
denote the composition of $\rho_{\rmG}$ and the mod$-\ell$ 
reduction map $id_{\bK (\rmS)} \wedge \rho_{\ell}:\bK(\rmS) \ra \bK(\rmS){\underset {\mbS} \wedge} \H(\Z/\ell)$.  The derived
completions with respect to the above maps are defined in ~\eqref{der.compl.1}.
\vskip .2cm
We then state the following Proposition that completes the reduction to considering actions by the ambient group $\tilde \rmG$.
\begin{proposition}
  \label{key.obs.2} (i) Making use of the weak-equivalences in ~\eqref{key.obs.1} as module-spectra over $\bK(\rmS, \tilde \rmG \times \rmG)$, one obtains the
   weak-equivalences:
   \[s^*{\compl_{\rho_{\tilde \rmG \times \rmG}}}: \bKH(\tilde \rmG{\underset {\rmG} \times}\rmX, \tilde \rmG){\compl_{\rho_{\tilde \rmG \times \rmG}}} {\overset {\simeq} \ra} \bKH(\tilde \rmG \times \rmX, \tilde \rmG \times \rmG){\compl_{\rho_{\tilde \rmG \times \rmG}}} \mbox{ and } {\it r}^*{\compl_{\rho_{\tilde \rmG \times \rmG}}}: \bKH(\rmX, \rmG){\compl_{\rho_{\tilde  \rmG \times \rmG}}} {\overset {\simeq} \ra} \bKH(\tilde \rmG \times \rmX, \tilde \rmG \times \rmG){\compl_{\rho_{\tilde \rmG \times \rmG}}}.\]
   \vskip .1cm
   (ii) Since the restriction maps $\rmR(\tilde \rmG \times \rmG) \ra \rmR(\tilde \rmG) = \rmR(\tilde \rmG \times \{1\})$ and $\rmR(\tilde \rmG \times \rmG)  \ra \rmR(\rmG) = \rmR(\{1\}\times \rmG)$
   are split-surjective, \cite[Theorem 1.6]{CJ23}, the observation ~\ref{remark.validity}(iii) above and the connectivity statement in Theorem ~\ref{loc.seq}(ii), provide the weak-equivalences:
   \[\bKH(\rmX,  \rmG){\compl_{\rho_{\tilde \rmG }}} \simeq \bKH(\tilde \rmG{\underset {\rmG} \times}\rmX, \tilde \rmG){\compl_{\rho_{\tilde \rmG }}} \simeq \bKH(\tilde \rmG{\underset {\rmG} \times}\rmX, \tilde \rmG){\compl_{\rho_{\tilde \rmG \times \rmG}}} \mbox{ and } \bKH(\rmX, \rmG){\compl_{\rho_{\rmG}}} \simeq \bKH(\rmX, \rmG){\compl_{\rho_{\tilde \rmG \times \rmG}}}.\]
  \vskip .1cm
  (iii) Combining (i) and (ii) we obtain: 
  \[\bKH(\rmX,  \rmG){\compl_{\rho_{\tilde \rmG }}} \simeq \bKH(\tilde \rmG{\underset {\rmG} \times}\rmX, \tilde \rmG){\compl_{\rho_{\tilde \rmG }}} \simeq \bKH(\rmX,  \rmG){\compl_{\rho_{\rmG }}}.\]
  (iv) Corresponding results also hold for the completions with respect to the composition with $\rho_{\ell}$. 
    \end{proposition}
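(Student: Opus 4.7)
The plan is to obtain each of the four parts essentially formally from three ingredients: (a) functoriality and weak-equivalence preservation of derived completion on module-spectra, (b) the Morita-style equivariance identification from observation~\ref{remark.validity}(iii), and (c) the base-change theorem \cite[Theorem 1.6]{CJ23} for derived completions, whose hypotheses are the split-surjectivity of the relevant restriction map of representation rings together with a connectivity statement as supplied by Theorem~\ref{loc.seq}(ii).

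For part (i), I would first recall that the derived completion along $\rho_{\tilde \rmG \times \rmG}: \bK(\rmS, \tilde \rmG \times \rmG) \to \bK(\rmS)$ is given by a cosimplicial totalization built from iterated smash products over $\bK(\rmS, \tilde \rmG \times \rmG)$, so in particular it is a functor on $\bK(\rmS, \tilde \rmG \times \rmG)$-module-spectra that preserves weak-equivalences. Applying this functor to the $\bK(\rmS, \tilde \rmG \times \rmG)$-equivariant weak-equivalences $s^*$ and $r^*$ displayed in~\eqref{key.obs.1} at once produces the two claimed weak-equivalences after completion.

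For part (ii), I would establish each chain in two steps. For the first chain, the identification $\bKH(\rmX, \rmG) \simeq \bKH(\tilde \rmG{\underset {\rmG} \times}\rmX, \tilde \rmG)$ as $\bK(\rmS, \tilde \rmG)$-module spectra is a Morita-type equivalence induced by the equivalence of categories of $\rmG$-equivariant perfect complexes on $\rmX$ with that of $\tilde \rmG$-equivariant perfect complexes on $\tilde \rmG{\underset {\rmG} \times}\rmX$; this transfers from $\bK$ to $\bKH$ through the $\AA^1$-invariance construction and yields the first equivalence after derived completion along $\rho_{\tilde \rmG}$. For the second equivalence in the first chain, I would invoke \cite[Theorem 1.6]{CJ23} to change the ring controlling the completion from $\bK(\rmS, \tilde \rmG)$ to $\bK(\rmS, \tilde \rmG \times \rmG)$, using observation~\ref{remark.validity}(iii) to identify the induced $\tilde \rmG \times \{1\}$-action on $\tilde \rmG{\underset {\rmG} \times}\rmX$ with the $\tilde \rmG$-action, the split-surjectivity of $\rmR(\tilde \rmG \times \rmG) \to \rmR(\tilde \rmG)$, and the connectivity input from Theorem~\ref{loc.seq}(ii). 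The second chain $\bKH(\rmX, \rmG){\compl_{\rho_\rmG}} \simeq \bKH(\rmX, \rmG){\compl_{\rho_{\tilde \rmG \times \rmG}}}$ is completely parallel, now using the subgroup $\{1\} \times \rmG$ (again by observation~\ref{remark.validity}(iii)) together with the split-surjectivity of $\rmR(\tilde \rmG \times \rmG) \to \rmR(\rmG)$. Part (iii) then follows by concatenating the equivalences of (i) and (ii), with the $\tilde \rmG \times \rmG$-completion serving as the bridge linking the $\tilde \rmG$- and $\rmG$-completions of $\bKH(\rmX, \rmG)$. For part (iv), I would note that smashing over $\mbS$ with $\H(\Z/\ell)$ distributes over the relevant smash products and cosimplicial totalizations, so it commutes up to weak equivalence with all the constructions used above, and hence the arguments of (i)--(iii) transport verbatim to completions along the composition $\rho_\ell \circ \rho_{(-)}$.

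The main obstacle will be in part (ii): verifying that the hypotheses of \cite[Theorem 1.6]{CJ23} are genuinely met in this setting, in particular that the connectivity statement from Theorem~\ref{loc.seq}(ii) supplies exactly the input required given that the module structure on $\bKH(\tilde \rmG{\underset {\rmG} \times}\rmX, \tilde \rmG)$ over $\bK(\rmS, \tilde \rmG \times \rmG)$ arises through the Morita identification rather than from a straightforward change of group. A secondary point of care is upgrading the Morita-style identification from $\bK$-theory to $\bKH$-theory, which requires showing the categorical equivalence is compatible with the $\AA^1$-invariance procedure used to define $\bKH$ on both sides.
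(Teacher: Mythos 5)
Your proposal is correct and follows essentially the same argument the paper intends: (i) from functoriality of derived completion on module spectra applied to the weak-equivalences in~\eqref{key.obs.1}, (ii) from the Morita-style identification together with the base-change result \cite[Theorem 1.6]{CJ23} (whose hypotheses are met by split-surjectivity of the relevant restriction maps on representation rings, observation~\ref{remark.validity}(iii), and the connectivity supplied by Theorem~\ref{loc.seq}(ii)), (iii) by concatenation, and (iv) by commuting the $\H(\Z/\ell)$-smash with the completion construction. The paper states this proposition in the introduction without a separate written proof, citing precisely these ingredients, and your reconstruction matches that reasoning.
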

\vskip .2cm
With the above reductions in place, we obtain the following main result.
\begin{theorem} 
\label{main.thm.1}
Assume that the base scheme $\rmS= Spec \,{\it k}$ for a perfect infinite field $k$ and that
$\rmX$ denotes any normal quasi-projective scheme of finite type over $S$ 
 and provided with an action by the not-necessarily-connected linear algebraic group 
$\rmG$. Let $\tilde \rmG$ denote a fixed ambient linear algebraic group satisfying the hypotheses in   ~\ref{stand.hyp.1}(ii)
 containing $\rmG$ as a closed  sub-group-scheme and let $\rmE{\tilde \rmG}^{gm}{\underset {\rmG}  \times}X$ denote the ind-scheme defined by the Borel construction as in
section ~\ref{geom.class.space}. (The K-theory and the homotopy invariant K-theory of these objects are defined in Definition ~\ref{Borel.equiv.th}.)
\begin{enumerate}[\rm(i)]
\item Then the map $\bKH({\rm X}, \rmG) \simeq \bK(\rmS, \rmG) \wedgeKH \bKH({\rm X}, \rmG) \ra \bKH({\rm E}{\tilde \rmG}^{\rm gm}, \rmG) \wedgeKH \bKH({\rm X}, \rmG) \ra 
\bKH({\rm E}{\tilde \rmG}^{\rm gm}\times {\rm X}, \rmG) \simeq \bKH(\rmE{\tilde \rmG}^{\rm gm}{\underset {\rmG}  \times}X)$ 
factors through the derived completion of $\bKH({\rm X}, \rmG)$ at $\rho_{\rmG}$  and induces a weak-equivalence 
\vskip .2cm
$\bKH({\rm X}, \rmG) \compl_{\rho_{\rmG}} {\overset {\simeq} \ra} \bKH(\rmE{\tilde \rmG}^{gm}{\underset {\rmG}  \times}X).$
\vskip .2cm \noindent
The spectrum on the left-hand-side is the derived completion of $\bKH({\rm X}, \rmG)$ along the map $\rho_{\rmG}$. (See section 3 for further details.) 
The above map is contravariantly functorial for $\rmG$-equivariant maps. It is also covariantly functorial for 
proper $ \rmG$-equivariant maps (between schemes) that are perfect: a map of schemes is perfect
if the derived push-forward sends
perfect complexes to perfect complexes, see Definition ~\ref{pseudo.coh}.
\item Let $\ell$ denote a prime different from the characteristic of $k$.
 Then one also obtains a weak-equivalence
\vskip .2cm
$\bKH({\rm X}, \rmG)_{\ell} \compl_{\rho_{\rmG}} {\overset {\simeq} \ra} \bKH(\rmE{\tilde \rmG}^{\rm gm}{\underset {\rmG}  \times}X)_{\ell}$
\vskip .2cm \noindent
where the subscript $\ell$ denotes mod$-\ell$-variants of the appropriate spectra. (See ~\eqref{KGl} for their precise definitions.). Therefore, one also obtains the weak-equivalence:
\vskip .2cm \noindent
$\bKH({\rm X}, \rmG) \compl_{\rho_{\ell} \circ \rho_{\rmG}} {\overset {\simeq} \ra} \bKH({\rm E}{\tilde \rmG}^{\rm gm}{\underset {\rmG}  \times}X) \compl_{\rho_{\ell}}$.
\vskip .2cm \noindent
The spectrum on the left-side (right-side) denotes the derived completion of $\bKH({\rm X}, \rmG)$ ($\bKH(\rmE{\tilde \rmG}^{\rm gm}{\underset {\rmG}  \times}\rmX)$) with respect to the composite map $\rho_{\ell} \circ \rho_{\tilde \rmG}$ 
(the map $\rho_{\ell}$, \res). The functoriality statements as in (i) also hold for this theory. 
\item When $\rmG$ is {\it special}, one obtains a weak-equivalence: $\bKH(\rmE{\tilde \rmG}^{gm}{\underset {\rmG}  \times}X) \simeq \bKH(\rmE{\rmG}^{gm}{\underset {\rmG}  \times}X)$, where
 $\rmE{\rmG}^{gm}{\underset {\rmG}  \times}X$ denotes the ind-scheme associated to $\rmG$ and defined by the Borel construction as in
section ~\ref{geom.class.space}.
\end{enumerate} 
\end{theorem}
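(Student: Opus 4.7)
The plan is to follow the overall strategy already outlined in the key reductions ~\ref{remark.validity} and Proposition ~\ref{key.obs.2}: reduce the statement successively to the case of the ambient group $\tilde\rmG$, then to a split maximal torus $\tilde\rmT\subseteq\tilde\rmG$, then to a one-dimensional torus $\mathbb{G}_m$, and finally prove that case by hand using the explicit geometric classifying space. Concretely, I would first apply Proposition ~\ref{key.obs.2} to replace $\bKH(\rmX,\rmG)\compl_{\rho_\rmG}$ by $\bKH(\tilde\rmG\underset{\rmG}\times\rmX,\tilde\rmG)\compl_{\rho_{\tilde\rmG}}$, thereby reducing to the case of the connected split reductive group $\tilde\rmG$ acting on a normal quasi-projective scheme (the $\rmG$-quasi-projectivity of $\rmX$ guarantees the quotient $\tilde\rmG\underset{\rmG}\times\rmX$ is again quasi-projective and normal). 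On the target side, the Borel construction is compatible with this operation: $\rmE\tilde\rmG^{gm}\underset{\tilde\rmG}\times(\tilde\rmG\underset{\rmG}\times\rmX)\cong\rmE\tilde\rmG^{gm}\underset{\rmG}\times\rmX$.

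Next I would reduce from $\tilde\rmG$ to $\tilde\rmT$. Since $\tilde\rmG$ is special, any principal $\tilde\rmG$-bundle is Zariski-locally trivial, so the quotient map $\rmE\tilde\rmG^{gm}\underset{\tilde\rmT}\times\rmX\to\rmE\tilde\rmG^{gm}\underset{\tilde\rmG}\times\rmX$ is a Zariski-locally trivial $\tilde\rmG/\tilde\rmT$-bundle. Combined with the hypothesis that $\rmR(\tilde\rmT)$ is free of finite rank over $\rmR(\tilde\rmG)$ and the fact that $\bKH(\rmX,\tilde\rmG)$ is recovered from $\bKH(\rmX,\tilde\rmT)$ as Weyl-group invariants tensored appropriately, one checks that the derived completion statement for $\tilde\rmT$-actions implies the one for $\tilde\rmG$-actions. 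Then, writing $\tilde\rmT\cong\mathbb{G}_m^{\,n}$, a Künneth argument together with induction on $n$ reduces further to the case $\tilde\rmT=\mathbb{G}_m$.

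The heart of the argument is the $\mathbb{G}_m$-case. Here I would take the explicit model $\rmE\mathbb{G}_m^{gm}=\colimn(\mathbb{A}^n\smallsetminus\{0\})$ with the scalar action, so that $\rmE\mathbb{G}_m^{gm}\underset{\mathbb{G}_m}\times\rmX=\colimn\bigl((\mathbb{A}^n\smallsetminus\{0\})\underset{\mathbb{G}_m}\times\rmX\bigr)$. Using $\mathbb{A}^1$-invariance of $\bKH$ (indispensable here, since $\rmX$ need not be regular), together with the localization sequence of Theorem ~\ref{loc.seq} applied to the closed immersion $\rmX\hookrightarrow\mathbb{A}^n\underset{\mathbb{G}_m}\times\rmX$ with open complement $(\mathbb{A}^n\smallsetminus\{0\})\underset{\mathbb{G}_m}\times\rmX$, one obtains a tower whose homotopy limit computes $\bKH\bigl(\rmE\mathbb{G}_m^{gm}\underset{\mathbb{G}_m}\times\rmX\bigr)$. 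The normality hypothesis and Proposition ~\ref{KH.vanishing} are used to control the negative $\bKH$-groups and to guarantee that this homotopy limit is well-behaved. The key computation is then to identify the resulting tower, through its $\rmR(\mathbb{G}_m)=\mathbb{Z}[t,t^{\pm 1}]$-module structure, with the tower defining the derived completion of $\bKH(\rmX,\mathbb{G}_m)$ along the augmentation ideal $(t-1)$; this proceeds by showing that the Koszul-type resolution built into the derived completion matches the filtration coming from the stratification by the closed strata $\mathbb{A}^{n-k}\underset{\mathbb{G}_m}\times\rmX$.

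Part (ii) follows by smashing everything with the mod-$\ell$ Moore spectrum and noting that derived completion commutes with $-\wedge\mathbb{S}/\ell$; the finiteness needed to make the limit argument go through is improved at $\ell\ne p$. Part (iii) is a direct comparison: when $\rmG$ is special, both $\rmE\rmG^{gm}\underset{\rmG}\times\rmX$ and $\rmE\tilde\rmG^{gm}\underset{\rmG}\times\rmX$ receive maps from $\rmE(\rmG\times\tilde\rmG)^{gm}\underset{\rmG}\times\rmX$, and the fibers $\tilde\rmG/\rmG$, respectively $\rmE\tilde\rmG^{gm}$, are $\bKH$-acyclic after normalization because of the Zariski local triviality of $\rmG$-torsors guaranteed by specialness; a standard descent argument then gives the claimed weak-equivalence. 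The main obstacle is the $\mathbb{G}_m$-case of the previous paragraph: identifying the explicit geometric tower with the purely algebraic derived completion tower at the spectrum level, while simultaneously accommodating the lack of regularity of $\rmX$ through cdh/$\mathbb{A}^1$-descent properties of $\bKH$.
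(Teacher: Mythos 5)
Your overall scaffolding (reduce to the ambient group via Proposition~\ref{key.obs.2}, then to a maximal torus, then prove the torus case by identifying the finite-level Borel construction with a partial derived completion via a Koszul/Thom-class argument and taking homotopy inverse limits) is the same as the paper's, and your $\mathbb{G}_m$-step is essentially Theorem~\ref{key.part.thm}. But there is a genuine gap in your reduction from $\tilde\rmG$ to $\tilde\rmT$. You invoke ``$\bKH(\rmX,\tilde\rmG)$ is recovered from $\bKH(\rmX,\tilde\rmT)$ as Weyl-group invariants tensored appropriately,'' and no such statement is available at the spectrum level (nor even integrally on homotopy groups in this generality); the paper never uses it. What the paper actually does (Theorem~\ref{main.thm.3}) is exhibit the $\tilde\rmG$-statement as a retract of the $\rmT$-statement: the flag bundle $\pi:\tilde\rmG\times_{\rmB}\rmX\to\rmX$ gives $\pi^*$ and a proper push-forward $\pi_*$ with $\rmR\pi_*\mathcal O_{\tilde\rmG/\rmB}\cong\mathcal O$ (projection formula, ~\eqref{derived.direct}), so $\pi_*\pi^*\simeq\mathrm{id}$; then $\rmB$ is replaced by $\rmT$ via homotopy invariance (Lemma~\ref{B.vs.T}). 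Two further ingredients you omit are indispensable here: Theorem~\ref{T.comp.vs.G.comp}, which compares the derived completion of a $\bK(\rmS,\rmT)$-module along $\rho_{\tilde\rmG}$ with that along $\rho_{\rmT}$ (this is where freeness of $\rmR(\tilde\rmT)$ over $\rmR(\tilde\rmG)$ and the connectivity statement of Proposition~\ref{KH.vanishing}/Theorem~\ref{loc.seq}(ii) enter), and Proposition~\ref{comp.towers.2}, which upgrades the level-wise factorizations and the $\pi^*,\pi_*$ retraction to (coherently homotopy commutative) maps of towers, without which one cannot pass to the homotopy inverse limit. Your ``Zariski-local triviality of the $\tilde\rmG/\tilde\rmT$-bundle plus Weyl invariants'' route does not supply these steps, so as written the reduction fails.

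Two smaller points. In the $\mathbb{G}_m$-case you apply the localization sequence to ``the closed immersion $\rmX\hookrightarrow\mathbb A^{n}\times_{\mathbb G_m}\rmX$,'' but that balanced product is not a scheme (the action is not free along the zero section); the paper works equivariantly with $\rmX\times\mathbb A^{m+1}\supset\rmX\times(\mathbb A^{m+1}\setminus 0)$, uses the Thom class $(\lambda-1)^{m+1}$ (Proposition~\ref{Thm.isom}) and Theorem~\ref{loc.seq}, and only then passes to the free quotient; also the multi-factor torus is handled by induction replacing $\rmX$ by $\rmX\times_{\rmT_i}\rmE\rmT_i^{gm,m}$, not by a K\"unneth argument. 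For (iii), the correct comparison is Proposition~\ref{indep.class.sp}: the span through $(\rmE\tilde\rmG^{gm,m}\times\rmE\rmG^{gm,m})\times_{\rmG}\rmX$ has fibers $\rmE\rmG^{gm}$ and $\rmE\tilde\rmG^{gm}$ (not $\tilde\rmG/\rmG$), and the weak-equivalences are obtained from cdh-descent, $\mathbb A^1$-acyclicity of these ind-schemes (Proposition~\ref{EG.acyclic}), and Zariski-local triviality of $\rmG$-torsors coming from $\rmG$ being special; for (ii) note the paper's mod-$\ell$ spectra are defined by smashing with $\H(\Z/\ell)$, not the Moore spectrum, and the commutation of the completion with that smash still requires the argument indicated around Corollary~\ref{der.comp.negK}.
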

In fact, if one restricts to actions of split tori, one may also consider more general base schemes than a field: but we choose not to discuss
this extension in detail, mainly for keeping the discussion simpler. See also Corollary ~\ref{der.comp.negK}, which discusses 
completion theorems for K-theory with finite coefficients again only for actions of split tori.
The {\it strategy we adopt} to proving the above theorem is an extension of the strategies we employed for proving 
a corresponding derived completion theorem for equivariant G-theory in \cite{CJ23}. {\it Homotopy invariance is essential} to our proof, so that
there is {\it no analogue of the above theorem for equivariant algebraic K-theory in general} and our proof invokes certain subtle properties
of equivariant homotopy K-theory as in Theorem ~\ref{loc.seq}, which also seem to be not known before.
\vskip .2cm
At this point it may be important to point out the {\it main differences} with the results and techniques in \cite{CJ23}, where the first two authors prove similar results
for $\rmG$-equivariant $\bG$-theory. 
\begin{enumerate}[\rm(i)]
 \item The very first difference shows up in the proof of Proposition ~\ref{indep.class.sp}, where we prove that the Borel style equivariant homotopy Algebraic K-theory 
 is independent of the choice of a geometric classifying space: since we also allow singular schemes, we are able to prove this only for groups that are special. Moreover, there are more stringent hypotheses on  
 what are called geometric classifying spaces: see ~\ref{geom.class.space}.
 \item As a consequence, the third statement in Theorem ~\ref{main.thm.1} holds only for linear algebraic groups that are special: the corresponding
 statement for equivariant $G$-theory holds for all linear algebraic groups as shown in \cite[Theorem 1.2]{CJ23}.
\item We also need to know that the spectrum of equivariant homotopy K-theory with respect to the action of any linear algebraic group is $-n$-connected for 
some sufficiently large  integer $n$: this is proven in \cite{HK} in the setting of algebraic stacks where the stabilizer groups are all linearly reductive, and using it for the action of split tori in \cite[Theorem 1.2]{KR}. It is proved in general in Theorem ~\ref{loc.seq}(ii). 
(In contrast, the spectrum of equivariant G-theory is always $-1$-connected.)
 \item In addition, we also follow a somewhat different route to proving Theorem ~\ref{main.thm.3}, than the one adopted in \cite{CJ23}. This is because
in order to establish Riemann-Roch theorems, it is essential to prove Proposition  ~\ref{comp.towers.2}: therefore we make use of Proposition ~\ref{comp.towers.2}
to obtain a somewhat different and simpler proof of Theorem ~\ref{main.thm.3}.
 \end{enumerate}
\vskip .2cm
As in \cite{CJ23}, our results in this paper also have {\it several similarities} as well as {\it some key differences}  with  the 
proof by Atiyah and Segal (see \cite{AS69}) of their theorem. All proofs 
proceed by reducing to actions by groups that are easier to understand. In our case, we first reduce
to the case where the given  linear algebraic group $\rmG$ is replaced by the ambient connected split reductive group $\tilde \rmG$, then where
 this group $\tilde \rmG$ is replaced by  a finite product of $\GL_n$s and then finally where this product of $\GL_n$s is replaced by its split maximal torus. 
The key difference between our proof and the proof of the classical Atiyah-Segal theorem is in the use of the derived completion, which
is essential for our proof. Moreover, unlike in the classical case, for a closed sub-group-scheme $\rmG$ in $\tilde \rmG$, the derived completion of a $\bK(\rmS, \rmG)$-module 
spectrum with
respect to the maps $\rho_{\rmG}:\bK(\rmS, \rmG) \ra \bK(\rmS)$ and $\rho_{\tilde \rmG}: \bK(\rmS, \tilde \rmG) \ra \bK(\rmS)$ will be different in general.  The main exception to this was
discussed \cite[Theorem 1.6]{CJ23}.
\vskip .2cm
\cite[Proposition 4.1]{Th88} shows the analysis in \cite[Corollary 3.3]{Seg} carries over to show that the representation ring of
any linear algebraic group over any algebraically closed field of characteristic $0$ is Noetherian. \cite{Serre} shows that for any connected split
reductive group $\tilde \rmG$ defined over a field, the representation ring $\rmR(\tilde \rmG)$ is Noetherian. Therefore, in the present framework,  the need for derived completion is only so as not to
put any strong restrictions on the schemes whose equivariant homotopy K-theory and G-theory we consider.
Recall that we also showed in \cite[Theorem 1.7]{CJ23} that the derived completions reduce to the usual completions at the augmentation ideal for all projective smooth schemes
over a field.
\vskip .1cm
\subsection{\bf Basic conventions and terminology}
\label{basic.terminology}
\begin{enumerate}[\rm(i)]
 \item{First we clarify that all spectra used in
this paper are  symmetric $\rmS^1$-spectra which, when restricted to the category of smooth schemes over ${\rm S}$ 
 are presheaves on the big Nisnevich site of smooth schemes (or a suitable subcategory) over the given base $\rmS$ and are ${\mathbb A}^1$-homotopy invariant
when restricted to smooth schemes over $\rmS$. This category will be denoted
$\Spt_{S^1}(\rmS)$, with the sphere spectrum denoted $\mbS$. An example to keep in mind is the equivariant K-theory spectrum (with respect to the action of a fixed linear algebraic group $\rmG$) 
which is defined only on schemes or algebraic spaces provided with actions by $\rmG$).  It is important to observe  that
some such spectra admit extensions to schemes that are not necessarily smooth over $\rmS$, as well as to algebraic spaces, the main examples of which are the equivariant $\rmG$-theory and $\rmK$-theory spectra. Ring and module spectra 
will have the usual familiar meaning, but viewed as objects in $\Spt_{S^1}(\rmS )$. An appropriate context for much our work would be that of model categories for ring and module spectra as worked
out in \cite[Theorem 4.1]{SS} as well as \cite{Ship04}.}
\vskip .1cm
\item{We will make extensive use of the model structures defined in \cite{Ship04} to produce cofibrant replacements
for commutative algebra spectra over a given commutative ring spectrum. The commutative ring spectra that
show up in the paper are largely the K-theory spectra  and the 
equivariant K-theory spectra associated to  the actions of a 
linear algebraic group.}
\vskip .1cm
\item We also make extensive use of the work of Schlichting on negative K-theory as in \cite{Sch} and \cite{Sch11}.
\vskip .1cm
\item{Let $\Spt_{S^1}(\rmS)$ denote the category of spectra and let $\rmI$ denote a small category. 
Then we provide the category of diagrams of spectra of type $\rmI$, $\Spt_{S^1}(\rmS) ^{\rmI}$, with the projective model structure as 
in \cite[Chapter XI]{BK}. Here the fibrations (weak-equivalences) are maps $\{f^i:K^i \ra L^i|i \eps \rmI\}$
so that each $f^i$ is a fibration (weak-equivalence, \res) with the cofibrations defined by the lifting property with respect to
 trivial fibrations. Since the homotopy inverse limit functor
is not well-behaved unless one restricts to fibrant objects in this model structure, we will always
implicitly replace a given diagram of spectra functorially by a fibrant one before applying the homotopy inverse limit. If $\{K^i|i \eps \rmI \}$ is a diagram of spectra, $\holim \{K^i|i \eps \rmI\}$ actually denotes $\holim \{R(K^i)|i \eps \rmI\}$
where $\{R(K^i)| i \eps \rmI\}$ is a fibrant replacement of $\{K^i| i \eps \rmI\}$ in $\Spt_{S^1}(\rmS) ^{\rmI}$.} 
\end{enumerate}

\vskip .2cm
\subsection{Properties of the representation ring}
\label{rep.ring}
Assume (again) that the base scheme $\rmS$  is the spectrum of a field $k$.
First recall that the algebraic fundamental group associated to a split reductive group $\rmG$ over $k$ may be defined as $\Lambda/\rmX( \rmT)$, where $\Lambda$
($\rmX( \rmT)$) denotes the weight lattice (the lattice of characters of the  maximal torus in $\rmG$, \res): see for example, \cite[1.1]{Merk}. Then it is
observed in \cite[Proposition 1.22]{Merk}, making use of \cite[Theorem 1.3]{St}, that if this fundamental group  is torsion-free, then
$\rmR(\rmT)$ is a  free module over $\rmR(\rmG)$. Here $\rmT$ denotes a maximal torus in $\rmG$ and $\rmR(\rmG)$ ($\rmR(\rmT)$) 
denotes the representation ring of $\rmG$ ($\rmT$, \res).
Making use of the observation that ${\rm SL}_n$ is simply-connected (that is,  the above fundamental group is trivial), for any $n$, one may conclude that 
$\pi_1(\GL_n) \cong \pi_1({\mathbb G}_m) \cong {\mathbb Z}$ where ${\mathbb G}_m$ denotes the central torus in $\GL_n$. Therefore $\rmR(\rmT)$ is free
over $\rmR(\GL_n)$, where $\rmT$ denotes a maximal torus in $\GL_n$.
\vskip .2cm
\subsection{Outline of the paper} We review the basic properties of Equivariant Homotopy K-theory, Equivariant K-theory and Equivariant G-theory in section 2.
This is followed by a discussion of geometric classifying spaces of linear algebraic groups and the basic properties of 
equivariant homotopy K-theory on the Borel construction in section 3. Section 4 is devoted to a quick review of
several basic results on derived completion that we use in later sections of the paper, with most of the key results already worked out
 in detail in ~\cite[section 3]{CJ23}.  Sections 5 and 6 are devoted to a detailed proof of Theorems
~\ref{main.thm.1}, with section 5 discussing the reduction to the case where the group is a split torus.
In this section, we also re-interpret Theorem ~\ref{main.thm.1} in terms of
pro-spectra. Section 6 discusses
 the proof of Theorem ~\ref{main.thm.1} for the action of a split torus.  A short appendix discusses a couple of technical results. Various applications of the derived completion theorems proved in 
 this paper are discussed in the accompanying paper, \cite{CJP24}, the most notable of them being {\it a variety of Riemann-Roch theorems}. 
\section{Equivariant K-theory and Equivariant Homotopy K-theory: basic terminology and properties}
Throughout the paper we will let $\rmS = \Speck$, where $\k$ is a perfect infinite field of arbitrary characteristic. $\tilde \rmG$ will denote  a split reductive group satisfying the standing hypothesis in ~\ref{stand.hyp.1}, quite often 
this being a $\GL_n$ or a finite product of $\GL_n$s. Let $\rmG$ denote a closed linear algebraic subgroup of $\tilde \rmG$. (In particular,  it is a
smooth affine group-scheme over the base field.)
Let $\rmX$ denote a scheme of finite type over $\rmS$ provided with an action by $\rmG$ as above and let $\rmZ$ denote a (possibly empty) closed $\rmG$-stable subscheme. Let 
$\Pscoh_{\rmZ} ({\rm X}, \rmG)$ ($\Perf_{\rmZ} ({\rm X}, \rmG)$)
denote the category of pseudo-coherent complexes of $\rmG$-equivariant $\O_X$-modules with bounded coherent cohomology sheaves with supports contained in $\rmZ$ (the category
of perfect complexes of $\rmG$-equivariant $\O_X$-modules with supports contained in $\rmZ$, \res). 
Recall that a $\rmG$-equivariant complex of $\O_X$-modules is pseudo-coherent (perfect) if it is quasi-isomorphic locally on
the Zariski topology on $\rmX$  to a bounded above complex (a bounded complex, \res) of locally free $\O_{\rmX}$-modules with bounded coherent
cohomology sheaves.
We provide these categories with the structure of bi-Waldhausen categories
with cofibrations, fibrations and weak-equivalences (see \cite[1.2.4 Definition]{ThTr} or \cite[1.2]{Wald}) by letting the cofibrations be the maps of complexes that are degree-wise split monomorphisms
(fibrations be the maps of complexes that are degree-wise split epimorphisms, weak-equivalences be the maps that are quasi-isomorphisms, \res).
\be \begin{equation}
  \label{KG.def}
\bG({\rm X}, \rmG) \quad (\bK({\rm X}, \rmG), \, \bK(\rmX \, on \, \rmZ, \rmG)) 
\end{equation} \ee
will denote the K-theory spectrum obtained from $\Pscoh ({\rm X}, \rmG)$ ($\Perf({\rm X}, \rmG)$, $\Perf_{\rmZ} ({\rm X}, \rmG)$, \res). 
\vskip .2cm
One may also consider the category ${\rm Vect}({\rm X}, \rmG)$ of $\rmG$-equivariant vector bundles on $\rmX$. This is an exact category, and one may apply Quillen's construction (see \cite{Qu}) to it to
produce another variant of the equivariant K-theory spectrum of $\rmX$. If we assume that every $\rmG$-equivariant coherent sheaf on $\rmX$ is the $\rmG$-equivariant quotient of a 
$\rmG$-equivariant vector bundle on $\rmX$, then one may observe that this produces a spectrum weakly-equivalent to $\bK({\rm X}, \rmG)$: see 
\cite[2.3.1 Proposition]{ThTr} or 
\cite[Proposition 2.8]{J10}. It follows from \cite[Theorem 5.7 and Corollary 5.8]{Th83} that this holds in many well-known examples. It is shown in 
\cite[section 2]{J02} that, in general, the map
from the K-theory spectrum to the G-theory spectrum (sending a perfect complex to itself, but viewed as a pseudo-coherent complex) is a weak-equivalence
\be \begin{equation}
     \label{PD}
\bK({\rm X}, \rmG) \simeq \bG({\rm X}, \rmG),
    \end{equation} \ee
\vskip .2cm \noindent
provided $\rmX$ is regular. In general, such a result fails to be true for the Quillen K-theory of $\rmG$-equivariant vector bundles, which is the reason for
our preference to the  Waldhausen style K-theory and G-theory considered above. 
\vskip .2cm
In view of our assumption that the base scheme $\rmS$ is the spectrum of a field $k$, clearly the spectra $\bG(\rmS, \rmG)$ and
$\bK(\rmS, \rmG) $ identify, and these also identify with the (Quillen) K-theory spectrum of the exact category of $\rmG$-equivariant vector bundles on $\rmS$.
\vskip .2cm
\subsection{Pseudo-coherence and Perfection}(See \cite[2.5.2, 2,5,3 and 2.5.4]{ThTr}.)
\begin{definition}
 \label{pseudo.coh}
Let $f: \rmX \ra \rmY$ denote a map between schemes of finite type over the base scheme. Then $f$ is {\it $n$-pseudo-coherent} if for each $x \eps \rmX$, there is a
Zariski open neighborhood $\rmU_{\it x}$ of $x$ and a Zariski open $\rmV \subseteq \rmY$ so that $f:\rmU_{\it x} \ra \rmY$ factoring as $g \circ i$, where 
$i:\rmU_{\it x} \ra \rmZ$ is a closed immersion with $i_*(\O_{\rmU_{\it x}})$ $n$-pseudo-coherent as a complex on $\rmZ$ (see \cite[I]{SGA6}), and where $g: \rmZ \ra \rmV$ is smooth. (It is observed there, that $f$ being 
$n$-pseudo-coherent depends only on the map $f$, and is independent on the choice of $\rmZ$.) Then $f$ is said to be {\it pseudo-coherent} if it is
$n$-pseudo-coherent for all integers $n$.
\vskip .1cm
Such a morphism $f: \rmX \ra \rmY$ is {\it perfect} if it is pseudo-coherent and locally of finite tor-dimension.
\end{definition}
Then the following are discussed as examples of morphisms that are either pseudo-coherent and/or perfect:
\begin{enumerate} [\rm(i)]
 \item If $\rmY$ is Noetherian, any map $f: \rmX \ra \rmY$ that is locally of finite type is pseudo-coherent. (For $\rmY$ not Noetherian, the above
  conclusion is false even for $f$ a closed immersion.)
 \item Any smooth map is perfect: so are regular closed immersions and morphisms that are local complete intersection morphisms.
\end{enumerate}
\begin{theorem}(See \cite[2.5.4 Theorem]{ThTr}.) Let $f: \rmX \ra \rmY$ denote a {\it proper} map of schemes of finite type over the base scheme $\rmS$. Assume that $f$ is pseudo-coherent (perfect, \res). Then if $\cE^{\bullet}$ is a pseudo-coherent (perfect) complex on $\rmX$, $Rf_*(\cE^{\bullet})$
 is a pseudo-coherent complex (perfect complex, \res) on $\rmY$.
\end{theorem}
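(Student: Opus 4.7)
Since pseudo-coherence and perfection are local properties on the target, we may assume $\rmY = \Spec(A)$ is affine. The plan is to establish the two assertions in turn: first pseudo-coherence of $Rf_*\cE^\bullet$, then the additional Tor-amplitude bound required for perfection.

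For pseudo-coherence, by Definition~\ref{pseudo.coh} we may work locally on $\rmX$ and factor $f$ as $g \circ i$, with $i:\rmX \hookrightarrow \rmZ$ a closed immersion (whose $i_*\O_\rmX$ is pseudo-coherent on $\rmZ$) and $g:\rmZ \to \rmY$ smooth. Since $i_*$ is exact, pseudo-coherence of $i_*\cE^\bullet$ on $\rmZ$ follows formally from pseudo-coherence of $i_*\O_\rmX$ together with that of $\cE^\bullet$; hence the task is reduced to showing that $Rg_*$ carries a pseudo-coherent complex on $\rmZ$ whose support is proper over $\rmY$ to a pseudo-coherent complex on $\rmY$. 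Using Chow's lemma, this proper support can be dominated by a scheme projective over $\rmY$, reducing further to the projective case. In that setting, twisting by powers of a relatively ample line bundle $\O(1)$ and applying Serre vanishing together with the finite generation of higher direct images yields, for each integer $n$, a bounded-above complex of finitely generated free $A$-modules that agrees with $Rg_*\cE^\bullet$ through degree $-n$; this is exactly the statement that $Rg_*\cE^\bullet$ is $n$-pseudo-coherent for every $n$, hence pseudo-coherent.

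For perfection, once pseudo-coherence is in hand, it remains to bound the Tor-amplitude of $Rf_*\cE^\bullet$ over $A$. The perfection hypothesis on $f$ provides a uniform bound on the Tor-amplitude of $Lf^*$, and the perfection of $\cE^\bullet$ gives a bound on its local Tor-amplitude over $\O_\rmX$. The projection formula
\[
Rf_*\bigl(\cE^\bullet \otimes^{\mathrm L}_{\O_\rmX} Lf^*M\bigr) \simeq Rf_*\cE^\bullet \otimes^{\mathrm L}_A M,
\]
valid for any $A$-module $M$, then shows that $Rf_*\cE^\bullet \otimes^{\mathrm L}_A M$ is concentrated in a bounded range of cohomological degrees independent of $M$, which is exactly finite Tor-dimension. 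Combined with the pseudo-coherence proved above, this gives perfection on $\rmY$.

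The main obstacle is the reduction from proper to projective via Chow's lemma together with the bookkeeping in the projective case: the pseudo-coherent version (as opposed to the textbook coherent-sheaf version of Grothendieck's coherence theorem) requires good-truncating $\cE^\bullet$ at successively lower degrees and splicing the resulting finite free resolutions, which must be done with care so that the approximations assemble into a genuine pseudo-coherent representative. This is precisely the content of \cite[2.5.4]{ThTr}, which follows SGA~6 Expos\'e~III and to which the statement of the theorem defers.
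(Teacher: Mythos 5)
The paper does not prove this statement at all—it is quoted directly from \cite[2.5.4 Theorem]{ThTr} (which in turn rests on SGA~6, Expos\'e III)—so there is no internal proof to compare against, and your outline is precisely the standard argument of the cited sources: local factorization through a smooth morphism plus Chow's lemma d\'evissage for the pseudo-coherence statement, and the projection formula combined with the finite Tor-amplitude of the perfect complex relative to $f$ (together with the bounded cohomological dimension of $Rf_*$, which you leave implicit) for the perfection statement. As a sketch it is correct, with the genuinely hard finiteness d\'evissage for pseudo-coherent complexes deferred, as you yourself note, to the citation; in the Noetherian setting of this paper it also simplifies, since pseudo-coherence then amounts to coherence of the cohomology sheaves and the first half reduces to Grothendieck's finiteness theorem via the hypercohomology spectral sequence.
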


\vskip .2cm
\subsection{Negative K-Theory}
\label{KN.KH.def}
Let $\rmX$ denote a scheme of finite type over $\rmS$ provided with an action by $\rmG$ as above and let $\rmZ$ denote a (possibly empty) closed $\rmG$-stable subscheme.
 Now observe that $\C=\Perf _{\rmZ}({\rm X}, \rmG)$ has the structure of
a complicial bi-Waldhausen category. Therefore, we will adopt the framework of \cite[5.10]{Sch} to define Negative K-theory.
Observe that the cofibrations (fibrations) in the category $\Perf_{\rmZ}({\rm X}, \rmG)$  are the degree-wise split injections (degree-wise split surjections, \res).
Observe also that it is closed under canonical homotopy pushouts and canonical homotopy pull-backs which are defined
as in \cite[1.1.2.1, 1.1.2.5]{ThTr}. The weak-equivalences are the maps that are quasi-isomorphisms. One may also verify that the axioms discussed in
\cite[1.2.11 and 1.9.6]{ThTr} are true for $\Perf _{\rmZ}({\rm X}, \rmG)$, so that $\Perf _{\rmZ}({\rm X}, \rmG)$ has the structure of a {\it Frobenius category}
(in the sense of \cite[Definitions 3.3, 3.4]{Sch}, where the projective-injective objects are the complexes in $\Perf _{\rmZ}({\rm X}, \rmG)$ that are 
acyclic. In fact let $\C_0$ denote the full subcategory of $\C$ consisting of complexes $\rmK$ for which the map $0 \ra \rmK$ is a weak-equivalence.
Then $(\C, \C_0)$ is a {\it Frobenius pair} in the sense of \cite[Definition 3.4]{Sch}. Therefore, we define the negative K-groups of $\C$ as
the homotopy groups in negative degrees of the  K-theory spectrum associated to the Frobenius pair $(\C, \C_0)$: see \cite[3.2.26]{Sch11}.
\vskip .2cm
We will presently recall this construction briefly. 
First we associate to any complicial bi-Waldhausen category $\C$, its K-theory space (defined as in \cite[1.3]{Wald} or \cite[1.5.2]{ThTr}).
This will be denoted ${\rm K}(\C)$.
Next one defines a suspension functor for complicial bi-Waldhausen categories as in \cite[2.4.6, 3.2.33]{Sch11}. For this one first takes the 
countable envelope of $\C$, whose objects are direct systems of cofibrations indexed by the natural numbers with values in $\C$. The morphisms 
are just the morphisms of such ind-objects. This category has the structure of a complicial bi-Waldhausen category. Finally the suspension $S\C$
is the quotient of the countable envelope of $\C$ by the subcategory $\C$. This has the structure of a Waldhausen category with cofibrations and
 weak-equivalences, where the weak-equivalences are the maps in countable envelope of $\C$ which are isomorphisms in the quotient of 
 the  triangulated category associated to the countable envelope by the triangulated category associated to $\C$.
\vskip .2cm
Then it is shown in \cite[3.2.26]{Sch11} that there are natural maps 
\be \begin{equation}
     \label{str.Kth.sp}
{\rm K}(\C) \ra \Omega {\rm K}(S\C)
    \end{equation} \ee
Therefore, one defines the spectrum ${\mathbb K}(\C)$ by the sequence of spaces whose $n$-th space is given by ${\rm K}(S^n\C)$ and where the 
structure maps are defined by the maps in ~\eqref{str.Kth.sp}. Taking $\C = \Perf _{\rmZ}({\rm X}, \rmG)$, this defines the spectrum 
${\mathbb K}({\rm X}\, on \, {\rmZ}, \rmG)$.
\vskip .2cm
\begin{proposition}
 \label{Key.prop.1}
 Let $\rmG$ denote a linear algebraic group acting on the scheme $\rmX$ and let $\tilde \rmG$ denote a linear algebraic group containing $\rmG$ as a 
 closed sub-group scheme. Let $\rmZ$ denote a $\rmG$-stable closed subscheme of $\rmX$. Let $\rmp: \tilde \rmG \times \rmG \ra \tilde \rmG \times _{\rmG} \rmX$ and
 $\rmq:\tilde \rmG \times \rmG \ra \rmX$ denote the obvious maps. Then the pull-backs:
 \[\rmp^*: \Perf_{\tilde \rmG\times_{\rmG}\rmZ}(\tilde \rmG \times_{\rmG} \rmX, \tilde \rmG) \ra \Perf_{\tilde \rmG \times \rmX}(\tilde \rmG \times \rmX, \tilde \rmG \times \rmG) \mbox { and } q^*: Perf_{\rmZ}(\rmX, \rmG) \ra \Perf_{\tilde \rmG \times \rmX}(\tilde \rmG \times \rmX, \tilde \rmG \times \rmG)
 \]
induce equivalences of the associated triangulated categories, showing that induced maps:
\[\rmp^*: {\mathbb K}(\tilde \rmG\times_{\rmG} \rmX\, on \,\tilde \rmG \times_{\rmG} \rmZ, \tilde \rmG) \ra {\mathbb K}(\tilde \rmG \times \rmX \, on \, \tilde \rmG \times \rmZ, \tilde \rmG \times \rmG) \mbox{ and }
 \rmq^*: {\mathbb K}(\rmX \, on \, \rmZ, \rmG) \ra {\mathbb K}(\tilde \rmG \times \rmX \, on \, \tilde \rmG \times \rmZ, \tilde \rmG \times \rmG)
\]
are weak-equivalences.
\end{proposition}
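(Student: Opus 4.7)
The plan is to establish each of $p^*$ and $q^*$ as an exact equivalence of the underlying complicial bi-Waldhausen (in fact Frobenius) categories of equivariant perfect complexes. Once that is in hand, the claimed weak-equivalences of the $\mathbb{K}$-theory spectra will follow automatically from the invariance of Waldhausen K-theory under exact equivalences that are triangulated equivalences on homotopy categories, together with Schlichting's corresponding invariance result in negative degrees that was already recalled in Section~\ref{KN.KH.def}.

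The geometric input is the observation that $\{1\} \times \rmG$ and $\tilde \rmG \times \{1\}$ are closed normal subgroups of $\tilde \rmG \times \rmG$ with quotients $\tilde \rmG$ and $\rmG$ respectively, and that $p$ and $q$ exhibit $\tilde \rmG \times \rmX$ as a principal bundle for each of these subgroups. For $p$: the $\{1\} \times \rmG$-action $g \cdot (\tilde g, x) = (\tilde g g^{-1}, gx)$ is free because right-translation on $\tilde \rmG$ is free, and the geometric quotient is precisely $\tilde \rmG \times_{\rmG} \rmX$; moreover, the residual $\tilde \rmG$-action on the quotient matches the one used in the statement. For $q$: the $\tilde \rmG \times \{1\}$-action $(\tilde g_1,1)(\tilde g,x) = (\tilde g_1 \tilde g, x)$ is left translation on the first factor only, so $q$ is a \emph{trivial} principal $\tilde \rmG$-bundle over $\rmX$.

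Granting this, faithfully flat descent for perfect complexes, applied equivariantly, yields the desired equivalences of categories. In the case of $p$, a $\tilde \rmG \times \rmG$-equivariant perfect complex on $\tilde \rmG \times \rmX$ is precisely a descent datum for a $\tilde \rmG$-equivariant perfect complex on the quotient $\tilde \rmG \times_{\rmG} \rmX$, so $p^*$ is fully faithful and essentially surjective. In the case of $q$, since the bundle is trivial, restriction to the fibre $\{e\} \times \rmX$ provides an explicit inverse up to equivalence, producing a $\rmG$-equivariant perfect complex on $\rmX$ from a $\tilde \rmG \times \rmG$-equivariant one on $\tilde \rmG \times \rmX$. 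Both pullbacks are exact and preserve cofibrations, fibrations, and weak-equivalences, and hence the Frobenius structure; the support conditions also transfer correctly because $p^{\,-1}(\tilde \rmG \times_{\rmG} \rmZ) = \tilde \rmG \times \rmZ = q^{\,-1}(\rmZ)$.

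The main obstacle I anticipate is carefully handling the descent step for $p$ at the level of Frobenius pairs, that is, verifying that the equivalence of derived categories provided by faithfully flat descent does in fact lift to an exact equivalence of the complicial bi-Waldhausen categories with the correct treatment of the acyclic (projective-injective) objects that define the Frobenius structure. Once this bookkeeping is in place, so that Schlichting's suspension $S\C$ and the comparison map ~\eqref{str.Kth.sp} match on both sides, the conclusion about weak-equivalences of the full (non-connective) K-theory spectra is immediate from the invariance results already cited.
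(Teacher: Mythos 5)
Your proposal is correct and follows essentially the same route as the paper: both $\rmp$ and $\rmq$ are faithfully flat torsor projections, equivariant flat descent identifies the categories of perfect complexes with the indicated supports (equivalently, induces equivalences of the associated triangulated categories), and the weak-equivalences of the non-connective K-theory spectra then follow from Schlichting's invariance result \cite[3.2.29]{Sch11}. The ``main obstacle'' you anticipate is not actually one: that invariance theorem only requires an exact functor of Frobenius pairs inducing an equivalence of the associated triangulated categories, which is exactly what descent provides, so no lift to an equivalence of the bi-Waldhausen/Frobenius categories themselves is needed.
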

\begin{proof} Observe that both $\rmp$ and $\rmq$ are flat maps. Therefore, the fact that the functors $\rmp^*$ and $\rmq^*$ induce equivalences of the associated triangulated categories follows readily from descent theory.
 Now the last statement follows from \cite[3.2.29]{Sch11}.
\end{proof}
\begin{proposition}
 \label{loc.tori}
 Let $\rmT$ denote a linearly reductive group acting on on the scheme $\rmX$ and let $\rmZ$ denote a $\rmT$-stable closed subscheme of $\rmX$. Let $\rmU$ denote the 
 complement of $\rmZ$ in $\rmX$. 
 The one obtains the fiber sequence:
 \[{\mathbb K}(\rmX \, on  \, \rmZ, \rmT) \ra {\mathbb K}(\rmX, \rmT) \ra {\mathbb K}(\rmU, \rmT)\]
\end{proposition}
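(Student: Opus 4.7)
The plan is to obtain the fiber sequence from the Thomason--Trobaugh / Schlichting localization theorem for the derived category of perfect complexes, after upgrading its extension/approximation step to the $\rmT$-equivariant setting using linear reductivity. Thus I will realize all three spectra as K-theories of Frobenius pairs, verify that the underlying sequence of triangulated categories is exact up to direct factors, and then invoke \cite[3.2.29]{Sch11} (the very tool already used in Proposition~\ref{Key.prop.1}) to conclude.

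\textbf{Step 1 (Frobenius pairs).} Give $\C_\rmX = \Perf(\rmX,\rmT)$, $\C_\rmZ = \Perf_\rmZ(\rmX,\rmT)$ and $\C_\rmU = \Perf(\rmU,\rmT)$ the complicial bi-Waldhausen structure of \S\ref{KN.KH.def}, and pair each with its subcategory of acyclic complexes. The associated spectra are ${\mathbb K}(\rmX\, on\, \rmZ,\rmT)$, ${\mathbb K}(\rmX,\rmT)$ and ${\mathbb K}(\rmU,\rmT)$. Restriction $j^{\ast}:\rmX\supseteq \rmU$ and the inclusion $i_{\ast}:\C_{\rmZ}\hookrightarrow\C_{\rmX}$ are exact functors of Frobenius pairs; the composite $j^{\ast}\circ i_{\ast}$ factors through the acyclics because a perfect complex supported on $\rmZ$ is acyclic on $\rmU$.

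\textbf{Step 2 (exactness of triangulated categories up to factors).} I will show that the induced sequence of triangulated homotopy categories
\[
\rmD^{\perf}_{\rmZ}(\rmX,\rmT)\longrightarrow \rmD^{\perf}(\rmX,\rmT)\longrightarrow \rmD^{\perf}(\rmU,\rmT)
\]
is exact up to direct summands in the Verdier sense. Kernel: a perfect $\rmT$-equivariant complex on $\rmX$ whose restriction to $\rmU$ is acyclic is quasi-isomorphic to one supported on $\rmZ$, by the local-to-global triangle $R\underline{\Gamma}_{\rmZ}\to\id\to Rj_{\ast}j^{\ast}$ applied in the equivariant derived category. Essential surjectivity up to summands: every $\rmT$-equivariant perfect complex $\cE^{\bullet}$ on $\rmU$ is a summand of the restriction of a $\rmT$-equivariant perfect complex on $\rmX$. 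This is the heart of the argument.

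\textbf{Step 3 (equivariant extension).} To obtain the extension in Step~2 I will adapt the Thomason--Trobaugh extension construction (\cite[Theorem~5.2.2]{ThTr}) to the linearly reductive equivariant setting. Choose a $\rmT$-stable affine open cover of $\rmX$; such covers exist because $\rmT$ is linearly reductive acting on a noetherian scheme, and $\rmT$-stable coherent ideals cutting out $\rmZ$ are generated by their $\rmT$-isotypic components. On each $\rmT$-stable affine piece the non-equivariant Thomason--Trobaugh extension is realized by bounded complexes of finitely generated projective modules; the corresponding equivariant extension is obtained by replacing these modules with their $\rmT$-equivariant hulls, which exists because the category of $\rmT$-equivariant quasi-coherent sheaves on an affine $\rmT$-scheme has enough equivariant projectives (this uses exactness of $\rmT$-invariants, i.e.\ linear reductivity). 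Gluing the local extensions via a $\rmT$-equivariant Mayer--Vietoris patching (again exact because $\rmT$ is linearly reductive) produces, up to a direct summand controlled by a $\rmT$-equivariant shifted Koszul-style complex, a $\rmT$-equivariant perfect complex on $\rmX$ restricting to $\cE^{\bullet}$. The need to pass to direct summands is the usual Thomason--Trobaugh phenomenon.

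\textbf{Step 4 (invoke Schlichting).} With exactness of triangulated categories up to factors in hand, \cite[3.2.29]{Sch11} (applied as in Proposition~\ref{Key.prop.1}) yields a homotopy fiber sequence
\[
{\mathbb K}(\rmX\, on\, \rmZ,\rmT)\longrightarrow {\mathbb K}(\rmX,\rmT)\longrightarrow {\mathbb K}(\rmU,\rmT),
\]
covering all degrees (including the negative K-theory constructed from the suspension functor $S\C$). The main obstacle is Step~3: producing the equivariant extension with only the hypothesis of linear reductivity, and verifying that the direct-summand ambiguity introduced by the Thomason--Trobaugh trick is absorbed correctly by Schlichting's ``up to factors'' exactness framework. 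Everything else is formal bookkeeping once this equivariant extension is in place.
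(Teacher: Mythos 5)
Your overall frame (Frobenius pairs, exactness up to direct factors of the sequence of triangulated categories, then Schlichting's localization machinery to get the non-connective fiber sequence) is the same one the paper uses, but Step 3 --- which you yourself identify as the heart --- has a genuine gap. You propose to run the Thomason--Trobaugh extension argument equivariantly by choosing a $\rmT$-stable affine open cover of $\rmX$, asserting that such covers exist ``because $\rmT$ is linearly reductive acting on a noetherian scheme.'' This is false in the generality of the proposition: $\rmX$ is only assumed to be a separated finite-type scheme (no normality or quasi-projectivity), and linear reductivity gives no Zariski-local control of the action. Already for ${\mathbb G}_m$ acting on the nodal curve obtained from ${\mathbb P}^1$ by gluing $0$ and $\infty$, the only invariant open neighborhood of the node is the whole (proper, non-affine) curve, so no invariant affine cover exists; Sumihiro's theorem needs normality, and the proposition is later applied (in the proof of Theorem ~\ref{loc.seq}) to schemes with no such hypotheses. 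Relatedly, your patching step tacitly presumes an equivariant resolution property (presenting equivariant perfect complexes by complexes of equivariant projectives/bundles and gluing them), which is also unavailable for general $[\rmX/\rmT]$. A smaller omission: exactness up to factors requires not only that every equivariant perfect complex on $\rmU$ be a summand of a restriction, but also that the induced functor $\Perf(\rmX,\rmT)/\Perf_{\rmZ}(\rmX,\rmT)\ra\Perf(\rmU,\rmT)$ be fully faithful; your Step 2 only identifies the kernel, which is in any case true by the definition of $\Perf_{\rmZ}$.

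The paper avoids any cover-based equivariant extension. Its input is the theorem of Hoyois--Krishna (using Hall--Rydh) that, for $\rmT$ linearly reductive, ${\rm D}_{qc}([\rmX/\rmT])$ is compactly generated with compact objects exactly the $\rmT$-equivariant perfect complexes (and similarly with supports in $\rmZ$); with compact generation in hand, the Neeman--Thomason style arguments of \cite[5.5.1 Lemma, 6.6 Theorem]{ThTr} yield both full faithfulness and cofinality of the functor from the Verdier quotient to $\Perf(\rmU,\rmT)$, and the resulting K-theory fiber sequence is matched with the Frobenius-pair (Schlichting) definition of the negative K-groups as in \cite[section 7]{Sch}. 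To repair your argument you should replace Step 3 by this compact-generation input rather than an equivariant Thomason--Trobaugh extension; restricting to cases where invariant affine covers and the resolution property do hold (e.g.\ normal quasi-projective $\rmX$ with torus action) would not suffice for the way the proposition is used in the paper.
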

\begin{proof} The main observation needed is that since $\rmT$ is linearly reductive, the $\rmT$-equivariant perfect complexes on $\rmX$ (or 
 equivalently the perfect complexes on the quotient stack $[\rmX/\rmT]$) are compact and the derived category ${\rm D}_{\rm qc}([\rmX/\rmT]) \simeq 
 D_{qc}(\rmX, \rmT)$ is compactly generated by the perfect complexes. This is proven in \cite[Theorem 4.5]{HK} using \cite[Theorem D]{HR}. Therefore, the same proof as in \cite[5.5.1 Lemma and 6.6 Theorem]{ThTr} completes the proof,
 modulo an agreement of the negative K-groups defined using the approach in \cite[sections 5 and 6]{ThTr} with the approach using Frobenius pairs as in \cite{Sch}. 
 This follows along the same lines as in \cite[section 7]{Sch}.
 
\end{proof}

\subsection{Homotopy K-Theory and Equivariant Homotopy K-Theory: Definitions}
\label{KH.def}
We let $\Delta_{\rmS}[n] = \rmS[x_0, \cdots, x_n]/(\Sigma_i x_i-1)$. As $n$ varies, we obtain the cosimplicial scheme $\Delta_{\rmS}[\bullet]$.
Given a  scheme of finite type  $\rmX$ over $\rmS$ with $\rmZ$ a closed subscheme, we let $\bKH(\rmX \, on \, \rmZ) = \hocolimD \{{\mathbb K}(\rmX \times_{\rmS} \Delta _{\rmS}[n] \, on \,\rmZ \times_{\rmS} \Delta _{\rmS}[n]| n\}$, which is the homotopy colimit of the
 simplicial spectrum given by ${\rm n} \mapsto {\mathbb K}(\rmX \times_{\rmS} \Delta_{\rmS} [n]\, on \,\rmZ \times_{\rmS} \Delta _{\rmS}[n] )$. 
\vskip .1cm
Let $\rmG$ denote a smooth affine group-scheme defined over $\rmS$ and acting on the scheme 
 $\rmX$ defined over $\rmS$. Let $\rmZ$ denote a closed $\rmG$-stable subscheme of $\rmX$. 
 Then one defines 
 \[\bKH(\rmX \, on \, \rmZ, \rmG) = \hocolimD \{{\mathbb K}(\rmX \times_{\rmS} \Delta _{\rmS}[n] \, on \, \rmZ \times_{\rmS} \Delta _{\rmS}[n], \rmG)|n\}.\]

\subsection{Homotopy K-Theory and Equivariant Homotopy K-theory: Basic properties}
\label{KH.props}
\begin{enumerate}
 \item By first replacing perfect complexes by a functorial flat replacement, one may see that $\bKH$  is a
 contravariant functor from the category of separated schemes of finite type over $\rmS$ to spectra, while $\bKH(\quad, \rmG)$ is a contravariant functor from separated schemes of finite type  over $\rmS$ 
 and provided with an action by $\rmG$, to spectra. By first replacing perfect complexes by a functorial flabby replacement (for example, given by the functorial Godement resolution), one may also see that $\bKH$ ($\bKH(\quad, \rmG)$) is covariantly functorial for proper morphisms of such schemes that are also perfect
 (proper morphisms of such schemes with $\rmG$-action that are also $\rmG$-equivariant and perfect).  
 \item 
 The pairings 
 $\bK(\rmX \times_{\rmS} \Delta_{\rmS}[n]) \wedge \bK(\rmX \times_{\rmS} \Delta_{\rmS}[n]) \ra \bK(\rmX \times_{\rmS} \Delta_{\rmS}[n])$ which are 
 compatible as $n$ varies, provide a multiplicative pairing 
 \be \begin{equation}
 \label{KH.pairing.1}
 \bKH(\rmX) \wedge \bKH(\rmX) \ra \bKH(\rmX), 
 \end{equation} \ee
 which is contravariantly functorial in 
 $\rmX$. The projections $p_n: \rmS[x_0, \cdots, x_n]/(\Sigma_i x_i-1) \ra \rmS$ provide a map $p^*: \bK(\rmX) \ra \bKH(\rmX)$ which 
 sends the multiplicative pairing $\bK(\rmX) \wedge \bK(\rmX) \ra \bK(\rmX)$ to the pairing in ~\eqref{KH.pairing.1}. It follows that 
 $\bKH(\rmX)$ is a ring spectrum and that the ring structure on $\bKH(\rmX)$ is compatible with the ring structure on the spectrum $\bK(\rmX)$, with all 
 of these being contravariantly functorial in $\rmX$.
 \item 
 Similarly one sees that $\bKH(\rmX, \rmG)$ is a ring spectrum, with the ring structure on the spectrum $\bKH(\rmX, \rmG)$ contravariantly functorial
  in $\rmX$ and $\rmG$-equivariant maps.
\item
 Moreover, it follows from \cite[Proposition 2.4]{Wei81} that $\bKH(\rmX \times {\mathbb A}^1) \simeq \bKH(\rmX)$, that is, $\bKH$ is homotopy
 invariant. Similarly, $\bKH(\rmX \times {\mathbb A}^1, \rmG) \simeq \bKH(\rmX, \rmG)$. 
 \item 
 For a closed subscheme $\rmY$ in $\rmX$,  a basic result following from \cite{ThTr} is that there is a natural weak-equivalence:
 $\bKH(\rmX \mbox{ on } \rmY) \simeq \bKH_{\rmY}(\rmX)$, where $\bKH_{\rmY}(\rmX)$ denotes the homotopy fiber of the 
 restriction $\bKH(\rmX) \ra \bKH(\rmX - \rmY)$. (We may also denote $\bKH_{\rmY}(\rmX) $ as $\bKH(\rmX, \rmX - \rmY)$.)
  \vskip .1cm
  In other words, the presheaf of spectra $\rmX \mapsto \bKH(\rmX)$ has localization sequences in the following sense. If $\rmY \subseteq \rmX$ is a closed
 subscheme of $\rmX$, with complement $\rmU = \rmX - \rmY$, there exists a stable cofiber sequence of $\rmS^1$-spectra:
 \[\bKH(\rmX \mbox{ on }\rmY) \simeq \bKH_{\rmY}(\rmX) \ra \bKH(\rmX) \ra \bKH(\rmU).\]
 \item
 More generally if $\rmT$ is a split torus, the presheaf of spectra $\rmX \mapsto \bKH(\rmX, \rmT)$ from the category of normal quasi-projective schemes of finite type over $\rmS$ with $\rmT$-actions
  to $\rmS^1$-spectra has localization sequences in the following sense. If $\rmY \subseteq \rmX$ is a closed
  $\rmT$-stable subscheme of $\rmX$, with complement $\rmU = \rmX - \rmY$, there exists a stable cofiber sequence of
  $\rmS^1$-spectra (see \cite[Theorem 1.2]{KR}, which in fact follows readily from the localization sequence considered in 
 Proposition ~\ref{loc.tori}):
  \[\bKH(\rmX \mbox{ on } \rmY, \rmT) \simeq \bKH_{\rmY}(\rmX, \rmT) \ra \bKH(\rmX, \rmT) \ra \bKH(\rmU, \rmT),\]
  where $\bKH(\rmX \mbox{ on } \rmY, \rmT)$ is defined as the homotopy K-theory of the category of $\rmT$-equivariant perfect complexes 
  on $\rmX$ which are acyclic on $\rmX - \rmY$.

  \item The presheaf of spectra $\rmX \mapsto \bKH(\rmX)$ satisfies {\it cdh} descent on the big $cdh$-site of the base scheme $\rmS$. (See \cite{CHH04} and \cite{Cis}.)

  \item The natural maps $\bK(\rmX) \ra \bKH(\rmX)$ and $\bK(\rmX, \rmG) \ra \bKH(\rmX, \rmG)$ in (2) are weak-equivalences
  when the scheme $\rmX$ is regular.
 \end{enumerate}
 
 \subsection{Remaining Key properties of Equivariant K- and KH-theories}
\label{KHG.props}
\vskip .1cm
Assume as in ~\ref{remark.validity} that the scheme $\rmX$ is provided with an action by the linear algebraic group $\rmH$ and that $\rmG$ is a bigger linear
algebraic group containing $\rmH$ as a closed subgroup scheme.  Then we let $ \rmG \times \rmH$ act on $ \rmG \times \rmX$ by 
 $( g_1, h_1)\circ ( g, x)= ( g_1 gg_1^{-1}, h_1x)$, $ g_1,  g \in  \rmG$, $h_1 \eps \rmH$ and $x \in \rmX$. Now one may observe that $ \rmG \times \rmH$
 has an induced action on $ \rmG{\underset {\rmH} \times} \rmX$ (defined the same way), and that $ \rmG \times \rmH$ acts on $\rmX$ through the given action of $\rmH $  
 on $\rmX$. (Here $ \rmG{\underset {\rmH} \times }{\rmX}$ denotes the quotient of $ \rmG \times {\rmX}$ by the action of $\rmH$ given by 
$h( g, x) = (gh^{-1}, hx)$.) The maps $s:  \rmG \times \rmX \ra  \rmG{\underset {\rmH } \times}\rmX$ and
 $r=pr_2:  \rmG \times \rmX \ra   \rmX $ are $ \rmG\times \rmH$ equivariant maps. 
\begin{lemma}
\label{key.pairings}
Then one obtains the commutative diagram
\[\xymatrix{{\bK(\rmS, \rmG \times \rmH) \wedge \bKH(\rmG{\underset {\rmH} \times}\rmX, \rmG)} \ar@<1ex>[d]_{id \wedge s^*} \ar@<1ex>[r] & {\bKH(\rmG{\underset {\rmH} \times}\rmX, \rmG)} \ar@<1ex>[d]_{s^*}\\
            {\bK(\rmS, \rmG \times \rmH) \wedge \bKH(\rmG \times \rmX, \rmG \times\rmH)} \ar@<1ex>[r] & {\bKH(\rmG \times \rmX, \rmG \times\rmH)}\\
            {\bK(\rmS, \rmG \times \rmH) \wedge \bKH(\rmX, \rmH)} \ar@<-1ex>[u]^{id \wedge r^*} \ar@<1ex>[r] & {\bKH(\rmX, \rmH)} \ar@<-1ex>[u]^{r^*}}
\]
Moreover both the maps $s^*$ and $r^*$ are weak-equivalences. Corresponding results also hold for $\bK$-theory in the place of $\bKH$-theory.
\end{lemma}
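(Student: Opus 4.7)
The plan is to reduce the weak-equivalence claims to Proposition \ref{Key.prop.1} applied to the simplicial schemes $\rmX\times_{\rmS}\Delta_{\rmS}[\bullet]$, and to deduce commutativity of the diagram from functoriality of equivariant pullback with respect to external products.

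First I would verify commutativity. Both $s$ and $r$ are $\rmG\times\rmH$-equivariant morphisms of $\rmS$-schemes (with the $\rmG\times\rmH$-actions on $\rmG\times_{\rmH}\rmX$ and on $\rmX$ described in the paragraph preceding the lemma, restricted to the closed subgroups $\rmG\cong\rmG\times\{1\}$ and $\rmH\cong\{1\}\times\rmH$ respectively). The horizontal module-action maps are induced by external product of equivariant perfect complexes followed by pullback along the structure map to $\rmS$. Since equivariant pullback along $s$ and $r$ commutes with external product and with pullback along the structure maps, both squares commute.

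Next I would establish that $s^*$ and $r^*$ are weak equivalences. Proposition \ref{Key.prop.1}, applied with $(\tilde\rmG,\rmG)$ there relabeled as $(\rmG,\rmH)$ here and with $\rmZ=\rmX$, gives weak equivalences
\[
s^*\colon {\mathbb K}(\rmG\times_{\rmH}\rmX,\rmG)\xrightarrow{\simeq}{\mathbb K}(\rmG\times\rmX,\rmG\times\rmH),\quad r^*\colon {\mathbb K}(\rmX,\rmH)\xrightarrow{\simeq}{\mathbb K}(\rmG\times\rmX,\rmG\times\rmH),
\]
the essential input being that flatness of $s$ and $r$ together with faithfully flat descent identifies the triangulated categories of equivariant perfect complexes. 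This already proves the $\bK$-theory version of the lemma (the last sentence of the statement). To upgrade from $\mathbb{K}$ to $\bKH$, I would apply the same proposition to $\rmX\times_{\rmS}\Delta_{\rmS}[n]$ for each $n\ge 0$, with $\rmG$ and $\rmH$ acting trivially on $\Delta_{\rmS}[n]$; the canonical isomorphisms
\[
\rmG\times_{\rmH}(\rmX\times_{\rmS}\Delta_{\rmS}[n])\cong (\rmG\times_{\rmH}\rmX)\times_{\rmS}\Delta_{\rmS}[n],\quad \rmG\times(\rmX\times_{\rmS}\Delta_{\rmS}[n])\cong(\rmG\times\rmX)\times_{\rmS}\Delta_{\rmS}[n]
\]
assemble the level-wise weak equivalences into weak equivalences of simplicial spectra. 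Taking the homotopy colimit over $\Delta$ preserves weak equivalences and, by definition \ref{KH.def}, yields the asserted weak equivalences of $\bKH$-spectra.

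The only point requiring care is that all structures are $\rmG\times\rmH$-equivariantly compatible: one must check that the descent equivalences underlying Proposition \ref{Key.prop.1}, when applied levelwise to $\rmX\times_{\rmS}\Delta_{\rmS}[\bullet]$, respect the $\bK(\rmS,\rmG\times\rmH)$-module structure. This is the main (routine) obstacle and follows because both the module action and the descent equivalence are implemented by the same external-product and pullback operations on equivariant perfect complexes, which commute by associativity of tensor product and base change.
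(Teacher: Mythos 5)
Your proposal is correct and follows essentially the same route as the paper: the weak-equivalences come from the flat/faithfully flat descent equivalence of the categories of equivariant perfect complexes (exactly the content of Proposition \ref{Key.prop.1}, relabeled), applied levelwise over $\Delta_{\rmS}[\bullet]$ and passed through the homotopy colimit defining $\bKH$, while commutativity of the squares reduces to compatibility of the tensor-product pairings with pullback at the level of perfect complexes. The paper's own proof is terser (it leaves the simplicial upgrade to $\bKH$ implicit), so your explicit levelwise argument is a harmless elaboration rather than a different method.
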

\begin{proof}  First one observes that the pairing on the top row factors through  the composite map
 \[{\bK(\rmS, \rmG \times \rmH) \wedge \bKH(\rmG{\underset {\rmH} \times}\rmX, \rmG)} {\overset {pr_1^*} \ra} {\bK(\rmG\times \rmS, \rmG \times \rmH) \wedge \bKH(\rmG{\underset {\rmH} \times}\rmX, \rmG)} \ra {\bK(\rmG{\underset {\rmH} \times} \rmS, \rmG) \wedge \bKH(\rmG{\underset {\rmH} \times}\rmX, \rmG)}\]
where $pr_1$ denotes the projection to the first factor.  Now it suffices to show that a corresponding diagram of tensor-product pairings of categories of equivariant perfect complexes
 commutes, which will prove the commutativity of the top square. The bottom square commutes for similar reasons.
The fact that $s^*$ and $r^*$ are weak-equivalences follows by observing that the equivariance data and faithfully flat descent provides an equivalence of categories,
${\rm Perf}(\rmG {\underset {\rmH} \times} \rmX, \rmG ) \simeq {\rm Perf}(\rmG \times \rmX, \rmG \times \rmH) \simeq {\rm Perf}(\rmX, \rmH)$, which denote the corresponding categories of equivariant perfect complexes.
 \end{proof}

\begin{enumerate}[\rm(i)]
\item Let $\rmH$ denote a closed subgroup scheme of $\rmG$. 
The $\rmG$-equivariant flat  map $\pi:\rmG{\underset {\rmH}  \times}\rmX \ra {\rmX}$, $(gh^{-1}, hx) \mapsto gh^{-1}hx = gx$
induce a  map $\pi^*: \bK({\rm X}, \rmG) \ra \bK(\rmG{\underset {\rmH}  \times}{\rm X}, \rmG) \simeq \bK({\rm X}, \rmH)$
 which identifies with the corresponding map obtained by restricting the group action from $\rmG$
to $\rmH$. A corresponding result also holds for equivariant homotopy K-theory $\bKH$ in the place of $\bK$.
\item
Next assume that ${\rmG}$ is a split reductive group over $\rmS$ and $\rmH=\rmB$, that is, $\rmH$ is a Borel subgroup  
of $\rmG$. Then using 
the observation that $\rmG/\rmB$ is {\it 
proper and smooth} over ${\rm S}$ and $\rmR^n \pi_* =0$ for $n$ large enough, one sees that the
  map $\pi$ also induces  push-forwards $\pi_*: \bK(\rmG{\underset {\rmB} \times}{\rm X}, \rmG) \ra \bK({\rm X}, \rmG)$. (Such a derived direct image functor may be made functorial at the level of complexes by considering perfect complexes which
are also injective $\O_{\rmX}$-modules in each degree.) A corresponding result also holds for $\bK$ replaced by $\bKH$.
\item
Assume the above situation. Then the projection formula applied to $\rmR\pi_*$ shows that the composition $\rmR\pi_*\pi^*(F) = F \otimes R\pi_*(\O_{\rmG{\underset {\rmB} \times}\rmX}) \cong 
F $, since 
\be \begin{align}
     \label{derived.direct.0}
\rmR^n \pi_*(\O_{\rmG{\underset {\rmB} \times}X}) &= \O_{\rmX}, \mbox{ if n=0 and}\\
&=0, \mbox{ if $n>0$}. \notag
    \end{align} \ee
\vskip .2cm \noindent
It follows that $\pi^*$ is a split monomorphism in this case, with the splitting provided by $\pi_*$. (See ~\eqref{splittings} where this is applied to reduce equivariant $\bKH$-theory with respect to the action of a split reductive group $\rmG$ to that of a Borel subgroup $\rmB$ and hence to that of a 
maximal torus $\rmT$ invoking Lemma ~\ref{B.vs.T}.)
\end{enumerate}
 \begin{proposition} 
\label{KH.vanishing}
 Let $\rmX$ denote a normal quasi-projective scheme of finite type over  $\rmS = Spec \, \k$, or more generally
a normal Noetherian scheme over $\rmS$ which is provided with an ample family of line bundles.
  Assume $\rmT$ denotes a split torus acting on $\rmX$. Then $\bKH(\rmX, \rmT)$ is $-n$-connected for a
 sufficiently large positive integer $n$. Moreover, the above conclusion holds if $\rmX$ is a scheme of finite type over the given base scheme, provided with the action of a split torus $\rmT$ and $\rmX$ has an ample family of $\rmT$-equivariant
  line bundles.
\end{proposition}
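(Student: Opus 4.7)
The plan is to reduce to the affine case via the $\rmT$-equivariant localization sequence together with Sumihiro's theorem, and then bound the negative $\bKH$-groups directly in that case. Under the given hypotheses, Sumihiro's theorem (cited in the standing hypothesis via \cite[Theorem 2.5]{SumII}) furnishes a finite covering of $\rmX$ by $\rmT$-stable affine open subschemes $\{U_i\}$. Combining the localization sequence from item (6) of ~\ref{KH.props} (and its Mayer--Vietoris consequence) with Noetherian induction along the $\rmT$-stable stratification produced by this cover, the problem reduces to bounding the connectivity of $\bKH(U, \rmT)$ for an affine $\rmT$-scheme $U = \Spec A$. The same reduction handles the more general hypothesis (normal Noetherian with a $\rmT$-equivariant ample family of line bundles), since those are precisely the conditions needed to guarantee both a $\rmT$-stable affine open cover and the applicability of Proposition~\ref{loc.tori}.

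In the affine case $U = \Spec A$, the $\rmT$-action corresponds to a grading of $A$ by the character lattice $\widehat{\rmT}$, and the category of $\rmT$-equivariant perfect complexes on $U$ is equivalent to the category of perfect complexes of $\widehat{\rmT}$-graded $A$-modules. Applying Schlichting's negative K-theory formalism (see \cite{Sch}, \cite{Sch11}) to this complicial bi-Waldhausen category, together with Weibel-type vanishing in the graded setting, one obtains a bound $K_{-i}(U, \rmT) = 0$ for $i$ exceeding some integer $N(\dim U, \rk \rmT)$. Alternatively, since $\rmT$ is linearly reductive, one may interpret $\bKH(U, \rmT)$ as the $\bKH$-theory of a quotient stack of bounded cohomological dimension and apply the Kerz--Strunk--Tamme style vanishing for $\bKH$ in negative degrees.

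The main obstacle is the passage from ${\mathbb K}$ to $\bKH$ via the defining homotopy colimit
\[
\bKH(\rmX, \rmT) = \hocolimD \, \{{\mathbb K}(\rmX \times_\rmS \Delta_\rmS[n], \rmT)\},
\]
because $\dim(\rmX \times_\rmS \Delta_\rmS[n]) = \dim \rmX + n$ grows linearly with $n$ and so the level-wise Schlichting bounds are not uniform in $n$. The remedy is to exploit the $\mathbb{A}^1$-homotopy invariance of $\bKH$ recorded in item (4) of ~\ref{KH.props}: after passing to $\bKH$ term-wise, the simplicial spectrum $[n] \mapsto \bKH(\rmX \times_\rmS \Delta_\rmS[n], \rmT)$ is essentially constant on $\bKH(\rmX, \rmT)$, and in the Bousfield--Kan spectral sequence computing $\pi_\ast\hocolimD$ the high-dimensional contributions collapse under the alternating-sum boundary maps. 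This produces a uniform lower bound on the connectivity of $\bKH(\rmX, \rmT)$ purely in terms of $\dim \rmX$ and $\rk \rmT$, yielding the desired integer $n$ and completing the argument.
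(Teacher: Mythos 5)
Your proposal diverges from the paper's argument, and as written it has genuine gaps. The paper's proof is short: it checks that the quotient stack $[\rmX/\rmT]$ has the resolution property (Sumihiro \cite{SumI}, \cite{SumII} together with \cite{Th87}, and \cite[Theorem 2.1]{Tot04} in the equivariant-ample case), hence affine diagonal by \cite[Proposition 1.3]{Tot04}, and then applies \cite[Theorem 1.1]{HK} verbatim to get the $-n$-connectivity of $\bKH(\rmX,\rmT)$. Your plan instead tries to re-derive this vanishing from scratch, and the crucial step is exactly the one you assert rather than prove: the claim that for an affine $\rmT$-scheme $U$ the groups $K_{-i}(U,\rmT)$ vanish for $i$ beyond a bound depending on $\dim U$ and $\rk\rmT$ does not follow from Schlichting's formalism plus ``Weibel-type vanishing in the graded setting''; Weibel-type bounds are a deep theorem even non-equivariantly, and the equivariant/stacky version is precisely the content of Hoyois--Krishna \cite{HK} (your ``alternative'' of citing a Kerz--Strunk--Tamme style vanishing for quotient stacks is the same theorem under another name). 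In addition, the reduction to the affine case is not available from the tools in this paper: Proposition~\ref{loc.tori} gives fiber sequences involving K-theory with supports, not Zariski/Mayer--Vietoris descent for the equivariant theory, so the ``localization plus Noetherian induction along a $\rmT$-stable cover'' step would need equivariant Nisnevich-descent results (as in \cite{KR} or \cite{HK}) that you neither cite nor establish; and in the general case (finite type with an ample family of $\rmT$-equivariant line bundles) the existence of a $\rmT$-stable affine cover itself requires an argument.

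The handling of the passage from ${\mathbb K}$ to $\bKH$ is also not correct as stated. The assertion that, by homotopy invariance, the simplicial spectrum $[n]\mapsto \bKH(\rmX\times_{\rmS}\Delta_{\rmS}[n],\rmT)$ is essentially constant is circular ($\bKH$ is the object being defined as $\hocolimD\,{\mathbb K}(\rmX\times_{\rmS}\Delta_{\rmS}[n],\rmT)$, and its homotopy invariance is not an input you can use to bound its connectivity), and ``the high-dimensional contributions collapse under the alternating-sum boundary maps'' is not an argument. What a correct version of your strategy would need is a uniform statement of the form $K_{-i}(\rmX\times_{\rmS}\Delta_{\rmS}[p],\rmT)=0$ for $i>\dim\rmX+p+c$ with $c$ independent of $p$, so that in the spectral sequence of the homotopy colimit the simplicial filtration degree $p$ exactly offsets the dimension growth and all contributions to $\pi_{-i}$ vanish for $i>\dim\rmX+c$; but such a dimension-sensitive bound is again exactly the Hoyois--Krishna theorem, and the paper sidesteps this entire discussion because \cite[Theorem 1.1]{HK} is formulated so as to yield the required $-n$-connectivity of $\bKH(\rmX,\rmT)$ directly once the affine-diagonal/resolution-property hypotheses are verified.
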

\begin{proof} A key first step is to observe that, under either of the hypotheses, the corresponding quotient stack $[\rmX/\rmT]$ has
  affine diagonal. When $\rmX$ is a normal quasi-projective scheme, Sumihiro's theorem \cite[Theorem 1]{SumI}, shows that $\rmX$ admits an equivariant locally closed
  immersion into a projective space onto which the action by $\rmT$ extends, so that one concludes that every $\rmT$-equivariant coherent sheaf on $\rmX$ is the
   $\rmT$-equivariant quotient of a $\rmT$-equivariant locally free coherent sheaf. (Observe that the latter property is
    what is called {\it the resolution property} in \cite{Tot04}.) Similarly if $\rmX$ is normal and is provided with an
  ample family of line bundles, one invokes \cite[1.6]{SumII} to arrive at the same conclusion as is worked out in \cite[Lemmas 2.4. 2.6. 2.10. 2.14]{Th87}. Under the more general 
  assumption that $\rmX$ is of finite type over $\rmS$ and is provided with a family of $\rmT$-equivariant ample line bundles, \cite[Theorem 2.1]{Tot04}
   shows that the corresponding quotient stack $[\rmX/\rmT]$ has the resolution property. 
  \vskip .1cm
  Therefore, \cite[Proposition 1.3]{Tot04}
  shows that, under any of the above hypotheses, the above quotient stacks $[\rmX/\rmT]$ all have affine diagonal. At this point it suffices
  to observe that all the hypotheses of  \cite[Theorem 1.1]{HK} are satisfied so that {\it op. cit} provides the required conclusion.
\end{proof}
 
 \vskip .2cm
 
\subsection{Localization sequences}
Next we proceed to show that one has localization sequences in Equivariant Homotopy K-Theory, not just with respect to actions by linearly reductive groups, but with 
respect to any linear algebraic group. Let $\rmX$ denote a scheme of finite type over $\k$ provided with the action of a 
linear algebraic group $\rmG$. Let $\rmZ$ denote a closed and $\rmG$-stable subscheme of $\rmX$. Let ${\rm Perf}_{\rmZ}(\rmX, \rmG)$ denote 
the category of all $\rmG$-equivariant perfect complexes on $\rmX$ with supports contained in $\rmZ$. This has the structure of a complicial bi-Waldhausen category, where 
the cofibrations (fibrations) are split injections (split surjections, \res) and where the weak-equivalences are those maps that are quasi-isomorphisms.
The non-connective K-theory spectrum of this bi-Waldhausen category will be denoted by ${\mathbb K}(\rmX \, on \, \rmZ, \rmG)$. Let $j: \rmU \ra \rmX$ denote 
the obvious open immersion of $\rmU = \rmX - \rmZ$ into $\rmX$.
\begin{theorem}
 \label{loc.seq}
 \begin{enumerate}[\rm(i)]
 \item Under the above assumptions, one obtains a fibration sequence of spectra:
 \[ {\bKH}(\rmX\, on \,\rmZ, \rmG) \ra {\bKH}(\rmX, \rmG) \ra {\bKH}(\rmU, \rmG) \]
 \item The spectrum $\bKH(\rmX \, on \, \rmZ, \rmG)$ is $-n$-connected for $n$ sufficiently large.
 \end{enumerate}
\end{theorem}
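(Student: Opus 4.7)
The plan is to reduce the theorem, in two successive steps paralleling the Key Reductions ~\ref{remark.validity}, to the already-established localization sequence for a split torus acting on a normal quasi-projective scheme (property (6) of ~\ref{KH.props}).

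First, I would apply Proposition ~\ref{Key.prop.1} to both $\rmX$ and the pair $(\rmX, \rmZ)$, extending it to the $\bKH$-level by taking homotopy colimits over the simplicial scheme $\Delta_{\rmS}[\bullet]$. This yields natural weak equivalences $\bKH(\rmX, \rmG) \simeq \bKH(\tilde\rmG \times_{\rmG} \rmX, \tilde\rmG)$ and the analogous one for spectra with support, all compatible with the open immersion $j: \rmU \hookrightarrow \rmX$. This reduces the problem to the case of the ambient connected split reductive group $\tilde\rmG$ acting on $\tilde\rmG \times_{\rmG} \rmX$, which is again normal and quasi-projective whenever $\rmX$ is (the $\rmG$-action on $\tilde\rmG \times \rmX$ being free).

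Next, I would pass from $\tilde\rmG$ to its split maximal torus $\tilde\rmT$ using the Borel-subgroup projection from ~\ref{KHG.props}(ii)-(iii) together with Lemma ~\ref{B.vs.T}. For any $\tilde\rmG$-scheme $\rmY$ and any $\tilde\rmG$-stable closed subscheme $\rmW$, this produces a natural retraction
$\pi^{*}: \bKH(\rmY \mbox{ on } \rmW, \tilde\rmG) \to \bKH(\rmY \mbox{ on } \rmW, \tilde\rmT)$
split by the derived push-forward $\pi_{*}$, the retraction $\pi_{*}\pi^{*} = \mathrm{id}$ following from properness and smoothness of $\tilde\rmG/\tilde\rmB$ together with $R\pi_{*}\O = \O$. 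These retractions are natural in the pair $(\rmY, \rmW)$, hence compatible with the restriction from $\rmX$ to $\rmU$. Since the localization sequence holds for $\tilde\rmT$, the natural comparison map from $\bKH(\rmX \mbox{ on } \rmZ, \tilde\rmG)$ into the homotopy fiber of $\bKH(\rmX, \tilde\rmG) \to \bKH(\rmU, \tilde\rmG)$ is a natural retract of the analogous, equivalent map for $\tilde\rmT$, and therefore is itself an equivalence. Combined with the first step, this yields (i).

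For (ii), I would combine (i) with Proposition ~\ref{KH.vanishing}. The retract arguments above realize $\bKH(\rmX, \rmG)$ and $\bKH(\rmU, \rmG)$ as natural retracts of $\bKH(\tilde\rmG \times_{\rmG} \rmX, \tilde\rmT)$ and $\bKH(\tilde\rmG \times_{\rmG} \rmU, \tilde\rmT)$, both of which are $-n$-connected for a common large $n$ by Proposition ~\ref{KH.vanishing}. Since retracts preserve connectivity and the homotopy fiber of a map of $-n$-connected spectra is $-(n+1)$-connected, the connectivity bound on $\bKH(\rmX \mbox{ on } \rmZ, \rmG)$ follows.

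The main obstacle is verifying that every equivalence and retraction in sight is natural with respect both to the open immersion $j$ and to the support structure, so that the retract argument applies termwise to the entire fiber sequence. In particular, one must check that $\pi_{*}$ preserves $\tilde\rmG$-equivariant perfect complexes with support in a prescribed closed subscheme (which follows from properness of $\pi$ together with the behavior of derived push-forward on supports), and that the equivalences of Proposition ~\ref{Key.prop.1} descend to the $\bKH$-level compatibly with the restriction maps. Once these naturality checks are in place, the argument is essentially formal.
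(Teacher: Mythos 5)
Your proposal follows essentially the same route as the paper: reduce to the ambient $\tilde\rmG = \GL_n$ via Proposition~\ref{Key.prop.1}, then use the $\rmG/\rmB$-projection $\pi$ with $\pi_*\pi^* \simeq \mathrm{id}$ (via ~\eqref{derived.direct.0}) and Lemma~\ref{B.vs.T} to exhibit the whole would-be fiber sequence for $\rmG$ as a natural retract of the known torus localization sequence of Proposition~\ref{loc.tori}, and conclude since weak equivalences (resp.\ connectivity bounds from Proposition~\ref{KH.vanishing}) are preserved under retracts. The paper phrases the retract step slightly differently — it builds the three-row diagram~\eqref{diagram.local}, passes to cones, and checks injectivity on homotopy groups from the top square and surjectivity from the bottom square — but this is just an unpacking of the fact that a retract of an equivalence is an equivalence, so the two arguments coincide in substance.
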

\begin{proof}
 Observe that when $\rmG$ is linearly reductive, the above localization sequence follows readily from the localization sequence in 
 Proposition ~\ref{loc.tori}. We will first imbed $\rmG$ as a closed subgroup of some ${\rm GL}_n$. 
 \vskip .1cm
 We let this ambient group ${\rm GL}_n$ be denoted by $\tilde \rmG$ 
 from now on-wards. We will replace $\rmX$ ($\rmZ$, $\rmU$) by $\tilde \rmG \times_{\rmG}\rmX$ ($\tilde \rmG \times_{\rmG}\rmZ$, $\tilde \rmG \times_{\rmG}\rmU$, \res).
 In view of the weak-equivalences $\bKH(\tilde \rmG\times_{\rmG}\rmX \, on  \, \tilde \rmG \times_{\rmG}\rmZ, \tilde \rmG) \simeq \bKH(\rmX \, on \, \rmZ, \rmG)$,
 $\bKH(\tilde \rmG\times_{\rmG}\rmX, \tilde \rmG) \simeq \bKH(\rmX, \rmG)$ and $\bKH(\tilde \rmG\times_{\rmG}\rmU, \tilde \rmG) \simeq \bKH(\rmU, \rmG)$, 
 we may assume that $\rmG$ denotes $ \tilde \rmG$, $\rmX$ denotes $\tilde \rmG\times_{\rmG}\rmX$, $\rmZ$ denotes  $\tilde \rmG\times_{\rmG}\rmZ$, 
 $\rmU$ denotes  $\tilde \rmG\times_{\rmG}\rmU$, $\rmB$ denotes $\tilde \rmB$ which is a Borel subgroup of $\tilde \rmG$, and $\rmT$ denotes
  a maximal torus of $\tilde \rmG$ contained in $\rmB$.
 \vskip .2cm
 Next we observe the homotopy commutative diagram
 \be \begin{equation}
  \label{diagram.local}
  \xymatrix{{\bKH(\rmX \, on \, \rmZ, \rmG)} \ar@<1ex>[r]^{\alpha_{\rmG}} \ar@<1ex>[d]^{\pi^*} & {\bKH(\rmX, \rmG)} \ar@<1ex>[r] \ar@<1ex>[d]^{\pi^*} & {\bKH(\rmU, \rmG)}  \ar@<1ex>[d]^{\pi^*}\\
            {\bKH(\rmG \times_{\rmB}\rmX \, on \, \rmG \times_{\rmB}\rmZ, \rmG)} \ar@<1ex>[r] \ar@<1ex>[d]^{\pi_*} & {\bKH(\rmG \times_{\rmB}\rmX, \rmG)} \ar@<1ex>[r] \ar@<1ex>[d]^{\pi_*} & {\bKH(\rmG \times_{\rmB}\rmU, \rmG)}  \ar@<1ex>[d]^{\pi_*}\\
            {\bKH(\rmX \, on \, \rmZ, \rmG)} \ar@<1ex>[r]^{\alpha_{\rmG}}  & {\bKH(\rmX, \rmG)} \ar@<1ex>[r]  & {\bKH(\rmU, \rmG)} }
\end{equation} \ee
One may observe also that the composition of the maps in each row is null-homotopic and that the composition of the two vertical maps in each column
is the identity: this last observation follows from ~\eqref{derived.direct.0}. Now the terms in middle row identify as follows:
\[ \bKH(\rmG \times_{\rmB}\rmX \, on \, \rmG \times_{\rmB}\rmZ, \rmG) \simeq \bKH(\rmX \, on \, \rmZ, \rmB) \simeq \bKH(\rmX \, on \, \rmZ,  \rmT),\]
\[\bKH(\rmG \times_{\rmB}\rmX, \rmG) \simeq \bKH(\rmX, \rmB) \simeq \bKH(\rmX, \rmT) \mbox{ and } \bKH(\rmG \times_{\rmB}\rmU, \rmG) \simeq \bKH(\rmU, \rmB) \simeq \bKH(\rmU, \rmT).\]
Observe that the first weak-equivalence in each case comes from Proposition ~\ref{Key.prop.1}, while the second weak-equivalence in each case comes from
Lemma ~\ref{B.vs.T} discussed below. In view of the above weak-equivalences, the middle row now identifies with 
 \[\bKH(\rmX \, on \, \rmZ, \rmT) {\overset {\alpha_{\rmT}} \longrightarrow} \bKH(\rmX, \rmT) \longrightarrow \bKH(\rmU, \rmT)\]
 which is a fibration sequence as observed above. 
 \vskip .2cm
 Since the composition of the maps in each row in ~\eqref{diagram.local} is null-homotopic, it follows that one obtains a
 homotopy commutative diagram:
 \[ \xymatrix{{Cone(\alpha_{\rmG})} \ar@<1ex>[r] \ar@<1ex>[d] & {\bKH(\rmU, \rmG)} \ar@<1ex>[d]\\
              {Cone(\alpha_{\rmT})} \ar@<1ex>[r] \ar@<1ex>[d] & {\bKH(\rmU, \rmT)} \ar@<1ex>[d]\\
              {Cone(\alpha_{\rmG})} \ar@<1ex>[r]  & {\bKH(\rmU, \rmG)}.}
 \]
Now it is straightforward to check using the fact that the middle map induces an isomorphism on all homotopy groups 
that the map in the top row is injective on homotopy groups while the map in the bottom row is surjective on homotopy groups.
Since the maps in the top row and the bottom row are the same, it follows that it induces an isomorphism on all homotopy groups, proving the 
first statement in the theorem.
\vskip .2cm
To prove the connectivity statement in (ii), observe that since the composition $\bKH(\rmX \, on \, \rmZ, \rmG) {\overset {\pi^*} \ra} \bKH(\rmG \times_{\rmB}\rmX \, on \, \rmG \times_{\rmB}\rmZ, \rmG) {\overset {\pi_*} \ra } \bKH(\rmX \, on \, \rmZ, \rmG)$
is the identity, it suffices to prove that $\bKH(\rmG \times_{\rmB}\rmX \, on \, \rmG \times_{\rmB}\rmZ, \rmG)$ is $-n$-connected for $n$ sufficiently large.
In view of the identification of the latter with $\bKH(\rmX \, on \,\rmZ,  \rmT)$ this is clear in view of Proposition ~\ref{KH.vanishing}. Observe that the 
connectivity proven here plays a key role in the proof of Proposition ~\ref{key.obs.2}(ii).
\end{proof}

\begin{lemma}
\label{B.vs.T}
 Let $\rmG$ denote a reductive group acting on the scheme $\rmX$, with $\rmB$ denoting a Borel subgroup of $\rmG$ and $\rmT$ denoting a maximal torus
 contained in $\rmB$.
 Then the obvious restriction $\bKH(\rmX, \rmB) \ra \bKH(\rmX, \rmT)$ is a weak-equivalence.
\end{lemma}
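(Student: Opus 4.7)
The strategy is to reduce to the case $\rmB/\rmT \cong \mathbb{G}_a$ via a filtration of the unipotent radical of $\rmB$, and then apply Proposition~\ref{Key.prop.1} together with $\mathbb{A}^1$-invariance of equivariant $\bKH$-theory. To set up the reduction, write $\rmB = \rmT \ltimes \rmU$ with $\rmU$ the unipotent radical, and choose a $\rmT$-stable filtration $\rmU = \rmU_0 \supsetneq \rmU_1 \supsetneq \cdots \supsetneq \rmU_n = \{e\}$ by closed normal subgroups of $\rmB$ with $\rmU_i/\rmU_{i+1} \cong \mathbb{G}_a$ (available because $\rmU$ is an iterated extension of $\rmT$-root subgroups, each isomorphic to $\mathbb{G}_a$). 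Setting $\rmB_i = \rmU_i \rtimes \rmT$ yields a chain $\rmB = \rmB_0 \supsetneq \rmB_1 \supsetneq \cdots \supsetneq \rmB_n = \rmT$ with $\rmB_{i+1}$ normal in $\rmB_i$ and $\rmB_i/\rmB_{i+1} \cong \mathbb{G}_a$; by induction on $n$, it suffices to prove each restriction $\bKH(\rmX, \rmB_i) \to \bKH(\rmX, \rmB_{i+1})$ is a weak equivalence, so from now on I assume $\rmB/\rmT \cong \mathbb{G}_a$.

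Next, applying Proposition~\ref{Key.prop.1} with the ambient group taken to be $\rmB$ and the subgroup $\rmT$ yields an identification $\bKH(\rmX, \rmT) \simeq \bKH(\rmB \times_{\rmT} \rmX, \rmB)$, where $\rmB$ acts on $\rmB \times_{\rmT} \rmX$ by left multiplication on the first factor. A direct check, tracing a $\rmB$-equivariant perfect complex $\F$ on $\rmX$ through the chain of equivalences, shows that the restriction map $\bKH(\rmX, \rmB) \to \bKH(\rmX, \rmT)$ corresponds to the pullback along the $\rmB$-equivariant action map $\pi: \rmB \times_{\rmT} \rmX \to \rmX$, $[b, x] \mapsto bx$. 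The $\rmU$-action on $\rmB \times_{\rmT} \rmX$ by left multiplication on the first factor is free with quotient $\rmX$, exhibiting $\pi$ as a $\rmU$-torsor; since $\rmU \cong \mathbb{G}_a$ is special in the sense of Grothendieck, any scheme-theoretic section of $\rmB \to \rmB/\rmT$ provides a global trivialization of $\pi$ as an $\mathbb{A}^1$-bundle over $\rmX$.

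The remaining step is to show that this pullback $\pi^*$ is a weak equivalence on $\bKH(-, \rmB)$, and this is the main obstacle. The property recorded in~\ref{KH.props}(4) gives $\mathbb{A}^1$-invariance only for the trivial-action product $\rmX \times \mathbb{A}^1$, whereas the $\rmB$-action on $\rmB \times_{\rmT} \rmX$ mixes the $\mathbb{A}^1$-fiber with the base nontrivially. I would handle this by combining the Zariski-local triviality of $\pi$ with the cdh/Nisnevich descent property of $\bKH$ recorded in~\ref{KH.props}(7): locally on $\rmX$, a splitting of $\pi$ reduces the claim to the product $\mathbb{A}^1$-invariance, and descent then globalizes the resulting weak equivalence. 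Equivalently, passing to quotient stacks, $[\rmX/\rmT] \to [\rmX/\rmB]$ is an affine bundle of stacks to which $\mathbb{A}^1$-invariance of $\bKH$ applies directly.
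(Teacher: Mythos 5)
Your overall strategy coincides with the paper's: identify $\bKH(\rmX,\rmT)\simeq\bKH(\rmB\times_{\rmT}\rmX,\rmB)$ via Proposition~\ref{Key.prop.1}, then show that the $\rmB$-equivariant map $\pi\colon\rmB\times_{\rmT}\rmX\to\rmX$ induces a $\bKH$-equivalence. Two remarks on where you diverge, one cosmetic and one substantive. First, the filtration of $R_u(\rmB)$ into $\mathbb{G}_a$-steps is unnecessary for the route the paper takes: it uses the $\rmB$-equivariant isomorphism $\rmB\times_{\rmT}\rmX\cong(\rmB/\rmT)\times\rmX$, $[b,x]\mapsto(b\rmT,bx)$, so that $\pi$ becomes the projection off the affine space $\rmB/\rmT$ all at once, and then invokes equivariant homotopy invariance. (Your observation that a section of $\rmB\to\rmB/\rmT$ gives a global trivialization is exactly this, phrased slightly differently.)

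Second, and this is the genuine issue, your fix for the nontrivial-action subtlety is misdirected. You correctly observe that~\ref{KH.props}(4) as stated covers only the trivial-action product, and that the $\rmB$-action on $\rmB\times_{\rmT}\rmX$ is not of that form. But then you propose to handle it via Zariski-local triviality of the $\rmU$-torsor plus cdh descent (\ref{KH.props}(7)). This doesn't address the problem: the torsor is already globally trivial, as you yourself note, and even after trivializing, the $\rmB$-action on the $\mathbb{A}^1$-factor remains nontrivial on every Zariski open, so shrinking $\rmX$ buys nothing. Moreover, the Zariski opens trivializing the torsor need not be $\rmB$-stable, and the cdh descent recorded in~\ref{KH.props}(7) is for the non-equivariant $\bKH$, so there is no equivariant descent argument to run over such a cover. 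What one actually needs is equivariant $\mathbb{A}^1$-invariance for a nontrivial affine $\rmB$-action on the $\mathbb{A}^1$-factor — equivalently, that $\bKH$ of quotient stacks is invariant under affine bundles, which is your closing parenthetical remark and is the correct justification (this also happens to be a place where your filtration reduction would earn its keep, by reducing to torsors under equivariant line bundles, which can be handled via the Thom isomorphism~\ref{Thm.isom} and the localization sequence~\ref{loc.seq}). The paper's own proof asserts this homotopy invariance without elaboration, so your instinct to pause here is sound; it is just the proposed repair that misses.
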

\begin{proof} Making use of the fact that $\rmB$ acts on $\rmX$, we first observe the isomorphism $\rmB \times_{\rmT} \rmX \cong \rmB/\rmT \times \rmX$
 of schemes with $\rmB$ actions. Therefore, one obtains the weak-equivalence:
 \[\bKH(\rmB \times_{\rmT} \rmX, \rmB) \simeq \bKH(\rmB/\rmT \times \rmX, \rmB).\]
 Since $\rmB/\rmT = {\rm R}_u(\rmB)$, which is the unipotent radical of $\rmB$ and hence an affine space, one obtains: $\bKH(\rmB/\rmT \times \rmX, \rmB) \simeq \bKH(\rmX, \rmB)$.
 On the other hand the weak-equivalences in Proposition ~\ref{Key.prop.1} with $\rmG = \rmT$ and $\tilde \rmG = \rmB$ provides the 
 weak-equivalence: $\bKH(\rmB \times_{\rmT} \rmX, \rmB) \simeq \bKH(\rmX, \rmT)$.
\end{proof}
\begin{proposition}(Thom isomorphism for equivariant vector bundles) 
\label{Thm.isom}
Let $\cE$ denote a $\rmG$-equivariant vector bundle on the $\rmG$-scheme $\rmY$ and let
  $i: \rmY \ra \cE$ denote {\it the zero-section immersion}. Then  there exists a {\it Thom-class} $\lambda_{-1}(\cE)$ in $\pi_0(\bKH(\cE \, on \, \rmY))$, so that
cup product with $\lambda_{-1}(\cE)$  induces weak-equivalences:
 \[\bKH(\rmY) \ra \bKH_{\rmY}(\cE) = \bKH(\cE, \cE- \rmY) \mbox { and } \bKH(\rmY, \rmG) \ra \bKH_{\rmY}(\cE, \rmG) = \bKH(\cE, \cE- \rmY, \rmG).\]
\end{proposition}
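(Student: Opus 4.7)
The plan is to construct the Thom class via the Koszul complex of the tautological section, identify cup product with this class as pushforward along the zero section, prove that pushforward is a weak equivalence via dévissage, and finally pass from $\bK$ to $\bKH$ through the simplicial construction.

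First I would set up the Thom class. On the total space $\cE$, the diagonal embedding $\cE \hookrightarrow \cE \times_{\rmY} \cE = \pi^*\cE$ gives a tautological $\rmG$-equivariant section $s$ of the pulled-back bundle $\pi^*\cE$, whose vanishing locus is precisely the zero section $\rmY \subset \cE$. Since $i : \rmY \hookrightarrow \cE$ is a regular closed immersion of codimension $\mathrm{rank}(\cE)$, the Koszul complex $K(\cE,s) = \Lambda^{\bullet}(\pi^*\cE^{\vee})$ is a $\rmG$-equivariant perfect complex on $\cE$ with support on $\rmY$, and it resolves $i_*\O_{\rmY}$ by locally free sheaves. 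Its class in $\pi_0(\bK(\cE \, on \, \rmY, \rmG))$ maps into $\pi_0(\bKH(\cE \, on \, \rmY,\rmG))$ under the natural map $\bK \to \bKH$, producing the Thom class $\lambda_{-1}(\cE)$. Because $\lambda_{-1}(\cE) = [i_*\O_{\rmY}]$ in $\pi_0(\bK(\cE \, on \, \rmY, \rmG))$, the projection formula $i_*(F) \simeq \pi^*(F)\otimes^{L} i_*\O_{\rmY}$ identifies the cup product map $F \mapsto \pi^*(F)\cdot \lambda_{-1}(\cE)$ with the derived pushforward $i_*:\bK(\rmY,\rmG)\to \bK(\cE \, on \, \rmY,\rmG)$, which exists because $i$ is a regular closed immersion and hence a perfect morphism.

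Next I would show $i_*$ is a weak equivalence of $\bK$-theory spectra by dévissage. The idea is that any $\rmG$-equivariant perfect complex on $\cE$ with support on $\rmY$ admits a natural filtration by the powers of the ideal sheaf $\I_{\rmY}$, whose associated graded pieces take the form $i_*(F \otimes_{\O_{\rmY}} \mathrm{Sym}^n(\cE^{\vee}))$ for $\rmG$-equivariant perfect complexes $F$ on $\rmY$; building on the Koszul resolution, this filtration produces an inverse to $i_*$ at the level of the underlying triangulated categories, following the equivariant refinement of Thomason's dévissage for perfect complexes on regular closed subschemes (see \cite[Proposition 3.14]{ThTr}-style arguments, adapted to the equivariant setting via Proposition ~\ref{Key.prop.1}). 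This yields a weak equivalence $i_*:\bK(\rmY,\rmG)\to \bK(\cE \, on \, \rmY,\rmG)$, and hence the corresponding Thom isomorphism for $\bK$.

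Finally, to pass from $\bK$ to $\bKH$, I would apply the above argument level-wise along the cosimplicial scheme $\Delta_{\rmS}[\bullet]$. For each $n$, the base change $i_n : \rmY\times_{\rmS}\Delta_{\rmS}[n]\to \cE\times_{\rmS}\Delta_{\rmS}[n]$ is still the zero section of the pulled-back vector bundle, so the same dévissage argument gives $(i_n)_* : \bK(\rmY\times_{\rmS}\Delta_{\rmS}[n],\rmG)\to \bK(\cE\times_{\rmS}\Delta_{\rmS}[n] \, on \, \rmY\times_{\rmS}\Delta_{\rmS}[n],\rmG)$ a weak equivalence. Taking the homotopy colimit over $\Delta$ (which preserves weak equivalences) produces the Thom isomorphism for $\bKH$, in both the non-equivariant and equivariant forms. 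The hard step is verifying the dévissage carefully in the equivariant setting over a possibly singular $\rmY$; compatibility with $\rmG$-equivariant structure and with perfection for the successive quotients needs to be tracked, but by the reductions in ~\ref{remark.validity} one may reduce to the case where $\rmG$ is a product of $\GL_n$'s, for which this bookkeeping is manageable.
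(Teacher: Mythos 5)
Your construction of the Thom class via the Koszul complex, and the identification of cup product with $\lambda_{-1}(\cE)$ as the pushforward $i_*$ through the projection formula, are fine. The genuine gap is the central step: the claim that $i_*\colon \bK(\rmY,\rmG) \ra \bK(\cE \, on \, \rmY,\rmG)$ is a weak equivalence by a d\'evissage along powers of the ideal sheaf. D\'evissage of that kind is a G-theory (coherent sheaf) argument; for perfect complexes it fails as soon as $\rmY$ is singular, and the proposition is applied in this paper precisely to possibly singular schemes (e.g.\ in Theorem ~\ref{key.part.thm} with $\rmY = \rmX$ an arbitrary finite-type scheme). Concretely, take $\rmG$ trivial and $\cE = {\mathbb A}^1_{\rmY}$: by the Thomason--Trobaugh localization sequence and the Bass fundamental theorem, $\bK({\mathbb A}^1_{\rmY} \, on \, \rmY) \simeq \bK(\rmY) \oplus \Omega\,{\rm NK}(\rmY)$, and the Nil term ${\rm NK}(\rmY)$ is nonzero whenever $\rmY$ is not $K$-regular, so $i_*$ is not an equivalence on $\bK$. (Relatedly, $i_*$ is not even fully faithful on derived categories of perfect complexes: the derived endomorphisms of $i_*\O_{\rmY}$ on $\cE$ are $\bigoplus_n \Lambda^n\cE[-n]$, not $\O_{\rmY}$, so there is no equivalence of the underlying triangulated categories to appeal to.) Your last step cannot repair this: each level $\rmY \times_{\rmS}\Delta_{\rmS}[n]$ is just as singular as $\rmY$, so the levelwise $\bK$-statement you invoke is false at every level, and the $\bKH$-statement is not a homotopy colimit of levelwise equivalences --- the Nil discrepancies only cancel after passing to $\bKH$ itself.

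The proof has to be run directly at the level of $\bKH$, exploiting its ${\mathbb A}^1$-invariance, which is what the paper does in two lines: the Thom space $\cE/(\cE - \rmY)$ is identified up to ${\mathbb A}^1$-homotopy with ${\rm Proj}(\cE \oplus \epsilon_1)/{\rm Proj}(\cE)$, and the projective space bundle formula for $\bKH({\rm Proj}(\cE\oplus\epsilon_1),\rmG)$ and $\bKH({\rm Proj}(\cE),\rmG)$, which needs no regularity hypothesis, yields the asserted weak-equivalences. Alternatively, your Koszul-class argument could be salvaged by proving the required statement for $\bKH$ rather than $\bK$ (e.g.\ via a KH-level fundamental theorem in which the Nil terms vanish), but any route that first asserts the Thom isomorphism for $\bK$ over the singular $\rmY$ is asserting something false.
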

\begin{proof} Let $\epsilon_1$ denote the trivial vector bundle of rank $1$ on $\rmY$. Then the Thom-space $\cE/\cE-\rmY$ identifies up to
 ${\mathbb A}^1$-homotopy with the homotopy cofiber ${\rm Proj}{(\cE\oplus \epsilon_1)}/{\rm Proj}(\cE)$. Now the projective space bundle formula
 for $\bKH({\rm Proj}{(\cE\oplus \epsilon_1)}, \rmG)$ and $\bKH({\rm Proj}{(\cE)}, \rmG)$ show readily that the required conclusion holds.
\end{proof}
\vskip .2cm
\section{Equivariant Homotopy K-theory on the Borel construction}
\subsection{The geometric classifying space}
\label{geom.class.space}
 We begin by recalling
briefly the construction of the {\it geometric classifying space of a linear algebraic group}: see for example, \cite[section 1]{Tot}, \cite[section 4]{MV}. Let 
$\rmG$ denote a linear algebraic group over $\rmS =Spec\, {\it k}$, that is,  a closed subgroup-scheme in $\GL_n$ over $\rmS$ for some n. For a  (closed) embedding 
$i : \rmG \ra \GL_n$, {\it the geometric classifying space} $\rmB_{gm}(\rmG; i)$ of $\rmG$ with respect to $i$ is defined as 
follows. For $m \ge   1$, let 
\be \begin{equation}
\label{EG.general}
\rmE\rmG^{gm,m}=U_m(\rmG)=U({\mathbb A}^{nm})
\end{equation} \ee
be the open sub-scheme of ${\mathbb A}^{nm}$ where the diagonal action of 
$\rmG$ determined by $i$ is free. By choosing $m$ large enough, one can always ensure that 
$\rmU({\mathbb A}^{nm})$ is non-empty and the quotient $\rmU({\mathbb A}^{nm})/\rmG$ is a quasi-projective scheme.
Moreover we will assume that the following conditions are satisfied (see \cite[Definition 2.1, p. 133]{MV}):
\vskip .1cm
(i) $\rmU_m(\rmG)$ has a $k$-rational point and
\vskip .1cm
(ii) For each $m$, let $\rmZ_m = {\mathbb A}^{nm} - \rmU_m(\rmG)$. Then, for any $m$, there exists an $m'>m$ so that the natural map $\rmU_m(\rmG)= {\mathbb A}^{nm} -\rmZ_m \ra \rmU_{m'}(\rmG)= {\mathbb A}^{nm'}-\rmZ_{m'}$ factors through 
the map ${\mathbb A}^{nm} - \rmZ_m \ra ({\mathbb A}^{nm})^2 - \rmZ_m^2$ of the form $v \mapsto (0, v)$.
\vskip .2cm
We recall the particularly nice construction of a geometric classifying space discussed in \cite[(2.1.1)]{CJ23} and \cite[3.1]{K}. We start with 
a faithful representation $\rmW$ of $\rmG$, and an open non-empty $\rmG$-stable subscheme $\rmU$ of $\rmW$ on which $\rmG$ acts freely, so
 that the quotient $\rmU/\rmG$ is a scheme. 
\vskip .1cm
We now let
\be \begin{equation}
\label{adm.gadget.1}
 \rmW_i = \rmW^{\times ^{ i}},  \rmU_1 = \rmU \mbox{ and } E\rmG^{gm,i}= \rmU_{i+1} = \left(\rmU_i \times \rmW \right) \cup
\left(\rmW \times \rmU_i \right) \mbox{ for }i \ge 1.
\end{equation} \ee
where $\rmU_{i+1}$ is  viewed as a subscheme of 
$\rmW^{\times ^{i+1}} $. Moreover, we let the map $\rmU_i \ra \rmU_{i+1}$ be given by $u_i \mapsto (0, u_i)$. Observe that 
\be \begin{equation}
     \label{EG.fin}
\rmE\rmG^{gm,i}=U_{i+1} = \rmU \times \rmW^{\times i} \cup \rmW \times \rmU \times \rmW^{\times i-1} \cup \cdots \cup \rmW^{\times i} \times \rmU.
    \end{equation} \ee
\vskip .2cm \noindent
Setting 
$\rmY_1 = \rmY= \rmW - \rmU$ and $\rmY_{i+1} = \rmU_{i+1} - \left(\rmU_i \times \rmW\right)$ for 
$i \ge 1$, one checks that $\rmW_i - \rmU_i = \rmY^{\times ^{i}}$ and
$\rmY_{i+1} = \rmY^{\times ^{i}} \oplus \rmU$.
In particular, $\codim_{\rmW_i}\left(\rmW_i - \rmU_i\right) =
i (\codim_{\rmW}(\rmY))$ and
$\codim_{\rmU_{i+1}}\left(\rmY_{i+1}\right) = (i+1)d - i(\dim(\rmY))- d =  i (\codim_{\rmW}(\rmY))$,
where $d = \dim(\rmW)$. Moreover, $\rmU_i \to {\rmU_i}/\rmG$ is a principal $\rmG$-bundle and that the quotient $\rmV_i= \rmU_i/\rmG$ exists 
 as a smooth quasi-projective scheme (since the $\rmG$-action on $\rmU_i$ is free and $\rmU/\rmG$ is a scheme).  

\vskip .2cm
In particular,  if $\rmG=GL_n$, one starts with a faithful representation on the affine space $\rmV={\mathbb A}^n$. Let
$\rmW= End(\rmV)$ and $\rmU= GL(\rmV) = GL_n$.  
\vskip .2cm
In case $\rmG=\rmT = {\mathbb G}_m^n$, $\rmE\rmT^{gm,i}$ will be defined
 as follows. Assume first that $\rmT={\mathbb G}_m$. Then one may let $\rmE\rmT^{gm,i}= {\mathbb A}^{i+1}-0$ with the
diagonal action of $\rmT={\mathbb G}_m$ on ${\mathbb A}^i$. Now $\rmB\rmT^{gm,i+1} = {\mathbb P}^i$, which clearly has a Zariski open covering by $i+1$ affine spaces.
If $\rmT= {\mathbb G}_m^n$, then 
\be \begin{equation}
     \label{EG.toric.case}
 \rmE\rmT^{gm,i}= ({\mathbb A}^{i+1}-0)^{\times n} 
\end{equation} \ee
with the $j$-th copy of ${\mathbb G}_m$ acting on the $j$-th factor ${\mathbb A}^{i+1}-0$.  Now $\rmB\rmT^{gm,i} = ({\mathbb P}^i)^{\times n}$.
We imbed $\rmE\rmT^{gm,i}$ into $\rmE\rmT^{gm,i+1}$ by sending $u \mapsto (0, u)$, $u \in ({\mathbb A}^{i+1}-0)^{\times n}$ and $(0, u) \in  ({\mathbb A}^{i+2}-0)^{\times n}$.
 \begin{lemma}
  \label{conds.MV} For the constructions of the geometric classifying spaces given by ~\eqref{adm.gadget.1}, ~\eqref{EG.fin} and ~\eqref{EG.toric.case}, the hypotheses
 (i) and (ii) in  ~\ref{geom.class.space} hold.
 \end{lemma}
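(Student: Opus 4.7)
The plan is to verify both axioms by direct inspection of each construction. Since \eqref{adm.gadget.1} and \eqref{EG.fin} describe the same object $U_{i+1}$ in two different ways, there are really only two cases to handle: the general iterative construction and the toric model \eqref{EG.toric.case}. In both, the transition maps are built by prepending zero coordinates, which is already the shape demanded by axiom~(ii); axiom~(i) then reduces to producing a single $k$-rational point, which is immediate because $k$ is infinite and the relevant open subschemes are non-empty.

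For the general construction, I first observe that $U\subset W$ is a non-empty Zariski open in the affine space $W$ over the infinite field $k$, hence contains a $k$-rational point $u_0$; then $(0,\ldots,0,u_0)\in U_{i+1}$ is $k$-rational, verifying~(i). For~(ii), I identify $W^{\,2i}=W^{\,i}\times W^{\,i}$ and read off from \eqref{EG.fin} that $U_{2i}$ consists of exactly those pairs $(w,v)\in W^{\,i}\times W^{\,i}$ with $w\in U_i$ or $v\in U_i$; equivalently, $U_{2i}=W^{\,2i}\setminus(Z_i\times Z_i)$, where $Z_i=W^{\,i}\setminus U_i$. The iterated inclusion $U_i\hookrightarrow U_{2i}$ obtained by applying the basic step $u\mapsto(0,u)$ exactly $i$ times is then literally $v\mapsto(0_{W^{\,i}},v)$, which lies in $(W^{\,i})^{2}\setminus(Z_i\times Z_i)$ since $v\notin Z_i$, so the factorization required by~(ii) holds with $m'=2m$.

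For the toric case, the point of $\rmE\rmT^{gm,i}=(\mathbb{A}^{i+1}\setminus 0)^{\times n}$ whose every factor equals $(1,0,\ldots,0)$ is $k$-rational, giving~(i). For~(ii), I iterate the defining inclusion $(i+1)$ times to produce a map $\rmE\rmT^{gm,i}\hookrightarrow \rmE\rmT^{gm,\,2i+1}$ that in each of the $n$ factors sends $v_j\in\mathbb{A}^{i+1}\setminus 0$ to $(0,v_j)\in\mathbb{A}^{2(i+1)}\setminus 0$. Regrouping the $2(i+1)n$ ambient coordinates as $(\mathbb{A}^{(i+1)n})^{2}$ by collecting all ``first halves'' first and all ``second halves'' afterwards, this iterated map takes the form $v\mapsto(0,v)$; its image avoids $Z_i\times Z_i$ because $v\notin Z_i$, and stays inside $\rmE\rmT^{gm,\,2i+1}$ because $(0,v_j)\neq 0$ for every $j$. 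There is no substantial obstacle; the only point needing a little care is the coordinate relabeling in the toric case, where the natural inclusion lives factor-by-factor and must be matched with the axiom's global ``prepend zeros'' shape via a permutation of the ambient affine coordinates.
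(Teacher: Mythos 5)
Your proof is correct and follows essentially the same route as the paper: the key point in both is the identification $\rmZ_{2m}=\rmZ_m\times\rmZ_m$, hence $\rmU_{2m}=({\mathbb A}^{nm})^2-\rmZ_m^2$, which gives condition (ii) with $m'=2m$, while condition (i) comes from the density of $k$-rational points in a non-empty open of affine space over the infinite field $k$. You additionally write out the toric case ~\eqref{EG.toric.case}, which the paper leaves to the reader as straightforward; the only point worth stating there explicitly is that, after the coordinate regrouping, the \emph{entire} open $({\mathbb A}^{(i+1)n})^2-\rmZ_i\times\rmZ_i$ (not merely the image of $v\mapsto(0,v)$) is contained in $\rmE\rmT^{gm,2i+1}$, which is immediate since for any point outside $\rmZ_i\times\rmZ_i$ one of the two halves has all its $n$ blocks nonzero.
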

\begin{proof} We will first consider the construction in ~\eqref{adm.gadget.1}. 
It is clear that the action of $\rmG$ on each $\rmU_m(\rmG)$ is free and that by making sure $\rmU_1$ has a $k$-rational point,
all the $\rmU_m(\rmG)$ also will have $k$-rational points. One may now identify each $\rmY_m$ in this construction with
the scheme $\rmZ_m= {\mathbb A}^{nm}- \rmU_m(\rmG)$. Now $\rmZ_{2m}= \rmY_{2m} = \rmY^{\times 2m} = \rmY ^{\times m} \times \rmY^{\times m} =
\rmZ_m \times \rmZ_m$. Therefore, $\rmU_{2m}(\rmG) = {\mathbb A}^{n.2m} - \rmZ_{2m}= {\mathbb A}^{nm} \times {\mathbb A}^{nm}- Z_{2m}=
{\mathbb A}^{nm} \times{\mathbb A}^{nm} - \rmZ_{m} \times \rmZ_m$. This shows that by taking $m'=2m$, the condition (ii) in ~\eqref{EG.general}
is also satisfied in this case. We skip the verification that the conditions (i) and (ii) in ~\ref{geom.class.space} are satisfied
 in the case of the construction in ~\eqref{EG.toric.case} as it is quite straightforward to check.
\end{proof} 
 \begin{proposition} Assume the above situation. Then $\rmU_{\infty} =\colimm \rmU_m(\rmG)$ is ${\mathbb A}^1$-acyclic on the cdh-site of Spec\, k.
  \label{EG.acyclic}
 \end{proposition}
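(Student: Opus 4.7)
I interpret the claim as asserting that $\rmU_\infty$ is contractible in the ${\mathbb A}^1$-cdh-local homotopy category of (ind-)schemes over $\Speck$; equivalently, for every presheaf of spectra $F$ on $\mathrm{Sm}/k$ that is ${\mathbb A}^1$-invariant and satisfies cdh-descent, the structural map
\[
F(\Speck) \longrightarrow F(\rmU_\infty) \;:=\; \holim_m F(\rmU_m(\rmG))
\]
is a weak-equivalence. The plan involves three ingredients: (a) each $\rmU_m(\rmG)$ is a Zariski-open subscheme of the affine space $\rmW^{\times m}$; (b) the closed complement $\rmZ_m = \rmW^{\times m}\smallsetminus \rmU_m(\rmG)$ has codimension $mc$ with $c = \codim_{\rmW}(\rmY) \geq 1$, as computed in (3.1.2)--(3.1.3); and (c) Morel-Voevodsky homotopy purity for ${\mathbb A}^1$-invariant cdh-local theories.

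First, by ${\mathbb A}^1$-invariance $F(\rmW^{\times m}) \simeq F(\Speck)$, so it suffices to show that the homotopy fiber $\rmF_m$ of the restriction $F(\rmW^{\times m}) \to F(\rmU_m(\rmG))$ becomes arbitrarily highly connected as $m \to \infty$. To do this I would stratify $\rmZ_m$ by smooth locally closed subschemes, using resolution of singularities in characteristic zero or de Jong alterations combined with cdh-descent in positive characteristic (as in \cite{Cis}, already invoked in Section 2.4 of this paper). On each smooth stratum, homotopy purity identifies the local contribution with the value of $F$ on the Thom space of a normal bundle of rank at least $mc$, which is $(mc - n_0)$-connective whenever $F$ is uniformly $(-n_0)$-connective. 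Piecing these contributions together via the filtration spectral sequence of the stratification shows that the connectivity of $\rmF_m$ grows without bound.

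Finally, since the homotopy fibers $\rmF_m$ assemble into a tower whose connectivity tends to infinity, $\holim_m \rmF_m$ is contractible (by the standard $\lim$, $\lim^1$ short exact sequence, all homotopy groups vanish once $m$ is large enough). Hence the fiber sequences assemble to give the desired equivalence $F(\Speck) \overset{\simeq}{\to} F(\rmU_\infty)$.

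The principal obstacle will be arranging the purity/stratification argument uniformly in $m$ and in arbitrary characteristic: one must apply alterations to the possibly singular $\rmZ_m$ while keeping the connectivity bound driven by $\codim(\rmZ_m)$ rather than $\dim(\rmZ_m)$ (which grows linearly in $m$ as well). This is exactly where the hypotheses that $k$ is perfect and infinite are used, together with the cdh-${\mathbb A}^1$-descent machinery of \cite{Cis}. A secondary point is that all input theories $F$ to which this proposition will be applied in the sequel (homotopy K-theory and its completions) are uniformly bounded below by Theorem~\ref{loc.seq}(ii) and Proposition~\ref{KH.vanishing}, so the fixed connectivity $n_0$ in the argument is automatically available.
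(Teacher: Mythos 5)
Your plan has a fatal gap: the connectivity estimate that drives the whole argument is false for the theories this proposition is applied to. For $F=\bKH$ (or any K-theory--type theory) the Thom isomorphism --- Proposition~\ref{Thm.isom} of this paper --- identifies the value of $F$ with supports in the zero section of a rank $r$ bundle over $Z$ with $F(Z)$ itself; there is no gain of connectivity as $r$ grows. Codimension-driven vanishing of the kind you invoke (``the Thom contribution is $(mc-n_0)$-connective'') holds only for theories that vanish in negative weights, such as motivic cohomology of a fixed weight or individual slices --- which is precisely why the slice-theoretic comparison in \cite[Appendix B]{CJ23} is restricted to smooth $\rmX$ --- and it fails for $\bKH$. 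Concretely, already for $\rmG={\GG}_m$ the fiber of $\bKH({\AA}^{m+1})\to\bKH({\AA}^{m+1}\setminus\{0\})$ is $\bKH(k)$ for every $m$: your fibers $\rmF_m$ never become highly connected, and when the corresponding $\holim$ statement does hold it is because the transition maps of the tower annihilate the nonconstant part, not because of connectivity. Moreover, your reformulation of the statement (an equivalence $F(\Speck)\to\holim_m F(\rmU_m(\rmG))$ for uniformly bounded-below cdh-local ${\mathbb A}^1$-invariant $F$) is weaker than what is asserted and needed: the proposition claims ${\mathbb A}^1$-acyclicity of the ind-scheme $\rmU_\infty$ itself on the cdh site, and it is used stalkwise in that form in the proof of Proposition~\ref{indep.class.sp}.

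The paper's proof is elementary and entirely different: it transports Morel--Voevodsky's argument \cite[Proposition 2.3]{MV} from the Nisnevich to the cdh site. One checks that every lifting problem $\delta\Delta[n]\to Sing_*(\rmU_\infty)(\S)$, $\S$ a cdh-stalk (a Hensel local ring), can be solved: by adjunction a map $f:\delta\Delta[n]_{{\AA}^1_{\S}}\to\rmU_m(\rmG)$ extends, since all schemes in sight are affine, to $f':\Delta[n]_{{\AA}^1_{\S}}\to{\AA}^{nm}$; by Lemma~\ref{extension} one chooses $g$ that is constantly $0$ on the boundary and constantly a $k$-rational point of $\rmU_m(\rmG)$ on $(f')^{-1}(\rmZ_m)$, and then $g\times f'$ lands in $({\AA}^{nm})^2-\rmZ_m^2$, hence in a larger $\rmU_{m'}(\rmG)$ by hypothesis (ii) of ~\ref{geom.class.space}, agreeing on the boundary with the stabilized $f$. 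This uses only affineness, the existence of rational points, and the stabilization condition on the system $\{\rmU_m(\rmG)\}$ --- no purity, no stratification or alterations, and no bounded-below hypothesis --- and it yields the genuine space-level contractibility that the later arguments require.
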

\begin{proof}
 We show that the proof given in \cite[Proposition 2.3, p.134]{MV} for the Nisnevich site extends to the cdh site. 
 Let $Sing_*$ denote resolution functor as in \cite[pp. 87-88]{MV} that takes a simplicial sheaf and produces an ${\mathbb A}^1$-fibrant simplicial sheaf.
 The idea of the proof is to show that for any commutative diagram
 \be \begin{equation}
  \label{lift.1}
  \xymatrix{{\delta\Delta[n]}\ar@<1ex>[r] \ar@<1ex>[d] & {Sing_*(\rmU_{\infty})(\S)} \ar@<1ex>[d]\\
            {\Delta[n]} \ar@<1ex>[r] \ar@{-->}[ur] & {*},}
 \end{equation} \ee
where $\Delta[n]$ is the simplicial $n$-simplex and $\delta \Delta[n]$ is its boundary and $\S$ denotes the spectrum of any Hensel ring, 
there exists a lifting indicated by the dotted arrow. Observe that ${Sing_*(\rmU_{\infty})(\S)}$ denotes the stalk of the simplicial presheaf $Sing_*(\rmU_{\infty})$ 
on the cdh-site, and not the Nisnevich site. Apart from this change the proof is exactly the same as proof of \cite[Proposition 2.3, p.134]{MV} which takes place
on the Nisnevich site. We provide the following details mainly for completeness. As observed in the proof of \cite[Proposition 2.3, p. 134]{MV}, by adjunction, the above diagram corresponds to 
\be \begin{equation}
  \label{lift.1}
  \xymatrix{{\delta\Delta[n]_{{\mathbb A}^1_{\S}}}\ar@<1ex>[r] \ar@<1ex>[d] & {\rmU_{\infty}} \ar@<1ex>[d]\\
            {\Delta[n]_{{\mathbb A}^1_{\S}}} \ar@<1ex>[r] \ar@{-->}[ur] & {*},}
 \end{equation} \ee
 where $\Delta[n]_{{\mathbb A}^1_{\S}} = Spec (\O_{\S}[x_0, \cdots, x_n]/\Sigma_i x_i-1)$ and 
 $\delta\Delta[n]_{{\mathbb A}^1_{\S}}$ is its boundary. Making use of the fact that all schemes we consider are affine, we
 conclude first that the closed immersion $\delta \Delta[n]_{{\mathbb A}^1_{\S}} \ra \Delta[n]_{{\mathbb A}^1_{\S}}$
 induces a surjection $Hom(\Delta[n]_{{\mathbb A}^1_{\S}}, {\mathbb A}^{u}) \ra Hom(\delta\Delta[n]_{{\mathbb A}^1_{\S}}, {\mathbb A}^{u})$
 for any $u$, where $Hom$ denotes maps in the category of schemes. Let $f: \delta\Delta[n]_{{\mathbb A}^1_{\S}} \ra \rmU_m(\rmG)$ denote
 any map. Then above observation shows $f$ extends to a map $f': \Delta[n]_{{\mathbb A}^1_{\S}} \ra {\mathbb A}^{nm}$.
  Let $\rmZ_m= {\mathbb A}^{nm}-\rmU_m$. Since $(f')^{-1}(\rmZ_m) \cap \delta\Delta[n]_{{\mathbb A}^1_{\S}}= \phi$, one can find
  a map $g:\Delta[n]_{{\mathbb A}^1_{\S}} \ra {\mathbb A}^{nm}$ so that it restricted to $\delta\Delta[n]_{{\mathbb A}^1_{\S}}$
  is the constant map sending everything to $0$, and it restricted to $(f')^{-1}(\rmZ_m)$ is the constant map sending
  everything to a fixed point $x$ of $\rmU_m(\rmG)$. (The existence of such a map $g$ follows from Lemma ~\ref{extension}.) Then the
  product map $g \times f': \Delta[n]_{{\mathbb A}^1_{\S}} \ra ({\mathbb A}^{nm})^2$ takes values in the complement of
  $\rmZ_m^2$ and agrees with the composition of $f$ with the morphism ${0} \times id: {\mathbb A}^{nm} \ra ({\mathbb A}^{nm})^2$
  on $\delta\Delta[n]_{{\mathbb A}^1_{\S}}$.
\end{proof}
\begin{lemma}
\label{extension}
 Let $\rmA, \rmB$ denote two closed subschemes of an affine scheme $\rmX$, so that $\rmA \cap \rmB =\phi$. Then, given a map
 $f: \rmA \sqcup \rmB \ra {\mathbb A}^n$, there exists an extension $F: \rmX \ra {\mathbb A}^n$, that is, the
 triangle
 \[\xymatrix{{\rmA \sqcup \rmB } \ar@<1ex>[r]^f \ar@<1ex>[d] & {{\mathbb A}^n}\\
             {\rmX} \ar@<1ex>[ur]_{F} }
 \]
commutes.
\end{lemma}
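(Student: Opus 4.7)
The plan is to translate this into commutative algebra via the Chinese Remainder Theorem. Write $\rmX = \Spec R$, and let $I_{\rmA}, I_{\rmB} \subseteq R$ denote the defining ideals of the closed subschemes $\rmA, \rmB$. The scheme-theoretic intersection $\rmA \cap \rmB$ is $\Spec(R/(I_{\rmA} + I_{\rmB}))$, so the hypothesis $\rmA \cap \rmB = \varnothing$ translates into $I_{\rmA} + I_{\rmB} = R$.

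Under this condition, the Chinese Remainder Theorem yields an isomorphism
\[
R/(I_{\rmA} \cap I_{\rmB}) \;\xrightarrow{\ \cong\ }\; R/I_{\rmA} \times R/I_{\rmB},
\]
and in particular the canonical projection $R \twoheadrightarrow R/I_{\rmA} \times R/I_{\rmB}$ is surjective. This is exactly the algebraic statement that the closed immersion $\rmA \sqcup \rmB \hookrightarrow \rmX$ realizes $\rmA \sqcup \rmB$ as a closed subscheme of $\rmX$, so that any morphism to an affine target admits an extension, provided one can produce lifts coordinate-wise.

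Now the morphism $f: \rmA \sqcup \rmB \to {\mathbb A}^n$ corresponds to a ring homomorphism $k[t_1, \dots, t_n] \to R/I_{\rmA} \times R/I_{\rmB}$, and hence to an $n$-tuple $(\bar c_1, \dots, \bar c_n)$ of elements in $R/I_{\rmA} \times R/I_{\rmB}$. By the surjectivity above, each $\bar c_i$ lifts to some $c_i \in R$. The assignment $t_i \mapsto c_i$ defines a ring map $k[t_1, \dots, t_n] \to R$, and the corresponding morphism $F: \rmX \to {\mathbb A}^n$ restricts, by construction, to $f$ on $\rmA \sqcup \rmB$. Since the lemma reduces to a single application of CRT, there is essentially no obstacle; the only thing to verify carefully is that schematic disjointness (rather than just set-theoretic disjointness) is what the rest of the paper needs in the proof of Proposition \ref{EG.acyclic}, which indeed it is, because the two closed subsets $(f')^{-1}(\rmZ_m)$ and $\delta\Delta[n]_{{\mathbb A}^1_{\S}}$ to which the lemma is applied are disjoint and can be given the reduced induced structure making their union a closed subscheme.
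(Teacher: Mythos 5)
Your proof is correct and matches the intended argument: the paper simply defers to the Nisnevich-site case from Morel--Voevodsky, which uses exactly this combination of the Chinese Remainder Theorem with the identification of morphisms $\Spec R \to {\mathbb A}^n$ and $n$-tuples of elements of $R$ (and works equally over any affine base, not just a field). One small clarification on your closing sentence: for closed subschemes of $\rmX = \Spec R$ there is no gap between set-theoretic and scheme-theoretic disjointness, since $V(I_{\rmA}) \cap V(I_{\rmB}) = \emptyset$ is already equivalent to $I_{\rmA} + I_{\rmB} = R$ (any proper ideal sits inside a maximal ideal), so the CRT hypothesis follows automatically from the stated hypothesis and the choice of scheme structure on the two closed subsets in Proposition~\ref{EG.acyclic} is immaterial.
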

\begin{proof} We skip the proof as it is essentially the same as on the Nisnevich site.
\end{proof}

\vskip .2cm
Let $\rmB\rmG^{gm,m}=\rmV_m(\rmG)=U_m(\rmG)/\rmG$ denote the quotient 
$\rmS$-scheme (which will be a quasi-projective variety) for the 
action of $\rmG$ on $\rmU_m(\rmG)$ induced by the (diagonal) action of $\rmG$ on ${\mathbb A}^{nm}$; the projection $ \rmU_m(\rmG) \ra \rmV_m(\rmG)$ defines $\rmV_m(\rmG)$ as the 
quotient  of $\rmU_m(\rmG)$ by the free action of $\rmG$ and $\rmV_m(\rmG)$ is thus smooth. We have closed embeddings 
$\rmU_m(\rmG) \ra \rmU_{m+1}(\rmG)$ and $\rmV_m(\rmG) \ra \rmV_{m+1}(\rmG)$ corresponding to the embeddings 
$Id \times  \{\rm0\} : {\mathbb A}^{nm} \ra {\mathbb A}^{nm } \times {\mathbb A}^n$. We set $\rmE\rmG^{gm} = \{U_m(\rmG)|m\} = \{ \rmE\rmG^{gm,m}|m\}$ and 
$ \rmB\rmG^{gm} = \{ \rmV_m(\rmG)|m\}$ which are ind-objects in the category of schemes. (If one prefers, one may view each $\rmE\rmG^{gm,m}$ ($\rmB\rmG^{gm,m}$)
as a sheaf on the big Nisnevich (\'etale) site of smooth schemes over $k$ or on the cdh-site of schemes over $k$, and then view $\rmE\rmG^{gm}$ ($\rmB\rmG^{gm}$) as the 
 the corresponding colimit taken in the corresponding category of sheaves.)
\vskip .2cm
Given a scheme $\rmX$  of finite type over $\rmS$ with a $\rmG$-action satisfying the standing hypotheses ~\ref{stand.hyp.1} , we let $\rmU_m(\rmG){\underset {\rmG} \times} \rmX$ denote the {\it balanced product}, 
where $(u, x)$ and $(ug^{-1}, gx)$ are identified for all $(u, x) \eps \rmU_m \times \rmX$ and $g \eps \rmG$.  Since the $\rmG$-action on $\rmU_m(\rmG)$ is free, $\rmU_m(\rmG){\underset {\rmG} \times} \rmX$
exists as a geometric quotient which is also a quasi-projective scheme in this setting, in case $\rmX$ is assumed to be quasi-projective: see \cite[Proposition 7.1]{MFK}. (In case $\rmX$ is an algebraic space of
finite over $\rmS$, the above quotient also exists, but as an algebraic space of finite type over $\rmS$.)
\vskip .2cm
It needs to be pointed out that the construction of the geometric classifying space is not unique in general. 
When $\rmX$ is an algebraic space or a scheme in general, we will let $\{\rmU_m(\rmG){\underset {\rmG} \times}\rmX|m\}$ denote the
ind-object constructed as above for a chosen ind-scheme $\{\rmU_m(\rmG)|m \ge 0\}$. When $\rmX$ is restricted to the category of
smooth schemes over $\Spec k$, one can apply the result below in ~\eqref{indep.geom.class.sp} to show that the
choice of the ind-scheme $\{\rmU_m(\rmG)|m \ge 0\}$  is irrelevant.
\begin{definition} (Borel style equivariant K-theory and Homotopy K-theory)
\label{Borel.equiv.th}
Assume first that $\rmG$ is a linear algebraic group or a finite group viewed as an algebraic group by imbedding it in some $\GL_n$. 
We define the Borel style equivariant K-theory of $\rmX$ to be $\bK(\rmE\rmG^{gm}{\underset {\rmG} \times}\rmX) = \holimm \bK(\rmU_m(\rmG){\underset {\rmG} \times}\rmX)$.
$\bKH(\rmE\rmG^{gm}{\underset {\rmG} \times}\rmX) = \holimm \bKH(\rmU_m(\rmG){\underset {\rmG} \times}\rmX) = 
\holimm \hocolimD \{\bK(\rmU_m(\rmG){\underset {\rmG} \times}\rmX \times_{\rmS} \Delta_{\rmS}[n])|n\}$.
 If $\ell$ is a prime different from the characteristic of the field $k$,
then $\bK(\rmE\rmG^{gm}{\underset {\rmG} \times}\rmX) \compl_{\rho_{\ell}} =  \holimm \bK(\rmU_m(\rmG){\underset {\rmG} \times}\rmX)\compl_{\rho_{\ell}}$.
$\bKH(\rmE\rmG^{gm}{\underset {\rmG} \times}\rmX) \compl_{\rho_{\ell}}$ is defined similarly.
\end{definition}
\vskip .2cm
Since the $\rmG$-action on $\rmE\rmG^{gm}$ is free, one obtains the weak-equivalences:
\vskip .2cm
$\bK(\rmE\rmG^{gm} \times {\rm X}, \rmG) \simeq \bK(\rmE\rmG^{gm}{\underset {\rmG} \times}\rmX)$, 
$\bKH(\rmE\rmG^{gm} \times {\rm X}, \rmG) \simeq \bKH(\rmE\rmG^{gm}{\underset {\rmG} \times}\rmX)$ 
\vskip .2cm \noindent
and similarly for the $\rho_{\ell}$-completed version.

Next we make the following observations.
\begin{itemize}
 \item Let $\{\rmE\rmG^{gm,m}|m \}$ denote an ind-scheme defined above associated to the algebraic group $\rmG$.
If $\rmX$ is any scheme or algebraic space over $k$, then viewing everything as simplicial presheaves on
the Nisnevich (or cdh) site, we obtain $\colimm \rmE\rmG^{gm,m}{\underset {\rmG} \times }X \cong
(\colimm \rmE\rmG^{gm,m}){\underset {\rmG} \times}X = \rmE\rmG^{gm}{\underset {\rmG} \times}X$. (This follows readily from the
observation that the $\rmG$ action on  $\rmE\rmG^{gm,m}$ is free and that filtered colimits commute with the
balanced product construction above.)
\item It follows therefore, that if $\rmE$ is any ${\mathbb A}^1$-local spectrum and $\rmX$ is a smooth scheme of finite type over $k$, then one obtains a weak-equivalence:
\[ \rmMap(\rmE\rmG^{gm}{\underset {\rmG} \times}{\rm X}, E) \simeq \holimm \{Map(\rmE\rmG^{gm,m}{\underset {\rmG} \times}{\rm X}, E)|m\}.\]
\end{itemize}
where $\rmMap(\quad, E)$ denotes the simplicial mapping spectrum. 
It is shown in \cite[Appendix B]{CJ23}, making use of the properties of motivic slices that for any two different models of geometric classifying spaces
given by $\{\rmE\rmG^{gm,m}|m \}$ and $\{\widetilde {\rmE\rmG}^{gm,m}|m\}$, one obtains a weak-equivalence for any ${\mathbb A}^1$-local spectrum $\rmE$ and any smooth scheme
$\rmX$:
\be \begin{equation}
\label{indep.geom.class.sp}
 \holimm \{\rmMap({\widetilde {\rmE\rmG}}^{gm,m}{\underset {\rmG} \times}{\rm X}, E)|m\} \simeq  \holimm \{\rmMap ({\rmE\rmG}^{gm, m}{\underset {\rmG} \times}{\rm X}, E)|m\}.
\end{equation} \ee
We proceed to extend such comparison results for the Borel constructions, when the scheme $\rmX$ is not necessarily smooth.
\begin{lemma}
 \label{ss}
Assume $\{\cY_n|n \ge 0\}$ is a direct system of schemes of finite type over the base scheme $\rmS$, and that for each fixed $n$,
$\{\phi_{m,n}:\cX_{m,n} \ra \cY_n|m\}$ is a direct system of maps of schemes of finite type over $\rmS$ which are compatible as $n$ varies, in the
 sense that the squares
 \[\xymatrix{{\cX_{m,n}} \ar@<1ex>[r] \ar@<1ex>[d]^{\phi_{m,n}} & {\cX_{m+1, n+1}} \ar@<1ex>[d]^{\phi_{m+1, n+1}}\\
             {\cY_n} \ar@<1ex>[r] & {\cY_{n+1}}}
 \]
commute. Assume $\rmE$ is a presheaf of 
 $\rmS^1$-spectra on the big $cdh$-site of the base scheme $\rmS$. Then one obtains the following, where $\H_{cdh}$ denotes the 
 hypercohomology spectrum computed on the $cdh$-site, and $\rmE_{|\cX_{m,n}}$ denotes the restriction of $\rmE$ to $\cX_{m.n}$.
 \begin{enumerate}
  \item There exist  spectral sequences \newline \indent
  $\rmE_2^{s,t} = \pi_{-s}( \H_{cdh}(\cY_n, \pi_{-t}\holimm R\phi_{m,n*}(E_{|\cX_{m,n}}))) \Ra \pi_{-s-t}( \holimm \H_{cdh}(\cX_{m,n}, \rmE))$, \newline \indent
  \indent  $\rmE_2^{s,t} = \pi_{-s}( \H_{cdh}(\cY_n, \pi_{-t}(E))) \Ra \pi_{-s-t}( \H_{cdh}(\cY_n, \rmE))$.
 \item The natural maps $\rmE_{|\cY_n} \ra R\phi_{m,n*}(\rmE_{|\cX_{m,n}})$ induces a map of the second spectral sequence to the first.
 \end{enumerate}
 Moreover, one may identify the stalks $\holimm \rmR\phi_{m,n*}(\rmE_{|\cX_{m,n}})_{{\it y}} = \H_{cdh}(\cX_{m,n}{\underset {\cY_n} \times}Spec\,\O_{\cY_n,{\it y}}^{cdh}, \rmE)$, where
  $\O_{\cY_n, {\it y}}^{cdh}$ denotes the Hensel ring forming the stalk of $\O_{\cY_n}$ in the cdh-site, at the point $y \eps \cY_n$.
\end{lemma}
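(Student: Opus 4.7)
The plan is to build both spectral sequences as instances of the standard hypercohomology (descent) spectral sequence for a presheaf of spectra on a site, then identify the abutment in each case, and finally verify the stalk formula by a direct computation using the right adjoint property of $R\phi_{m,n*}$ on the $cdh$-site.

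First, recall that for any presheaf of $S^1$-spectra $\mathcal{F}$ on the big $cdh$-site and any scheme $\mathcal{Y}$ of finite type over $\rmS$, there is a conditionally convergent descent spectral sequence
\[
E_2^{s,t} = \pi_{-s}\bigl(\H_{cdh}(\mathcal{Y}, \pi_{-t}\mathcal{F})\bigr) \;\Longrightarrow\; \pi_{-s-t}\H_{cdh}(\mathcal{Y}, \mathcal{F}),
\]
obtained by Postnikov-tower-ing $\mathcal{F}$ (after a fibrant replacement) and using that $\pi_{-t}$ applied to a fibrant replacement computes $cdh$-sheaf cohomology. Applying this to $\mathcal{Y} = \cY_n$ and $\mathcal{F} = \rmE$ yields the second spectral sequence directly. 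For the first spectral sequence, I would apply the same general descent spectral sequence to $\mathcal{Y} = \cY_n$ with the presheaf $\mathcal{F} = \holimm R\phi_{m,n*}(\rmE_{|\cX_{m,n}})$, a presheaf on the small $cdh$-site of $\cY_n$. The abutment is then $\pi_{-s-t}\H_{cdh}(\cY_n, \holimm R\phi_{m,n*}(\rmE_{|\cX_{m,n}}))$, and because homotopy limits commute with the global-section functor producing $\H_{cdh}$, this agrees with $\pi_{-s-t}\holimm \H_{cdh}(\cY_n, R\phi_{m,n*}(\rmE_{|\cX_{m,n}}))$. Using the fact that $R\phi_{m,n*}$ is right adjoint (up to weak equivalence) to $\phi_{m,n}^*$ in the fibrant replacement model, the $R\phi_{m,n*}$ produces a model of the pushforward for which $\H_{cdh}(\cY_n, R\phi_{m,n*}(\rmE_{|\cX_{m,n}})) \simeq \H_{cdh}(\cX_{m,n}, \rmE)$, so the abutment reads as $\pi_{-s-t}\holimm \H_{cdh}(\cX_{m,n}, \rmE)$ as required.

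The comparison map in (2) is then induced by the unit of the adjunction $\rmE_{|\cY_n} \to R\phi_{m,n*}\phi_{m,n}^*\rmE = R\phi_{m,n*}(\rmE_{|\cX_{m,n}})$. Passing to $\holimm$ gives a map $\rmE_{|\cY_n} \to \holimm R\phi_{m,n*}(\rmE_{|\cX_{m,n}})$ of presheaves of spectra on the small $cdh$-site of $\cY_n$. Functoriality of the hypercohomology descent spectral sequence in the coefficient presheaf then delivers a morphism from the second spectral sequence to the first, compatible with filtrations and with the abutment maps.

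Finally, for the stalk identification, fix a point $y \in \cY_n$. In the $cdh$-topology the stalk of a $cdh$-sheaf at $y$ is computed by evaluating the (fibrantly replaced) presheaf on $\Spec \O_{\cY_n,y}^{cdh}$. Since $R\phi_{m,n*}$ is modeled by $U \mapsto \H_{cdh}(\cX_{m,n}\times_{\cY_n} U, \rmE)$ on the small $cdh$-site of $\cY_n$, the stalk of each $R\phi_{m,n*}(\rmE_{|\cX_{m,n}})$ at $y$ is $\H_{cdh}(\cX_{m,n}\times_{\cY_n} \Spec\O_{\cY_n,y}^{cdh}, \rmE)$. Taking $\holimm$ and noting that $\holimm$ commutes with stalks in the present setting (as stalks in the $cdh$-topology on a Noetherian base are computed by filtered colimits along directed systems of $cdh$-covers, which commute with the finite homotopy limits relevant here; alternatively, apply $\holimm$ first and then stalkify in the derived category of presheaves of spectra) gives the claimed identification.

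The main obstacle I anticipate is the justification that one can interchange $\holimm$ with taking $cdh$-stalks, and with $\H_{cdh}(\cY_n,-)$ when passing to the abutment. Since the index is a single $\mathbb{N}$-directed system, the relevant $\lim^1$ issues are controlled and the interchange reduces to the fact that $\H_{cdh}$ is defined as a mapping spectrum to a fibrant object, which preserves homotopy limits; this is the only technical point requiring care.
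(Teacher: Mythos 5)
Your proposal is correct and takes essentially the same route as the paper: both produce the first spectral sequence from the Postnikov-tower (descent) construction applied to the coefficient presheaf $\holimm R\phi_{m,n*}(\rmE_{|\cX_{m,n}})$ on $\cY_n$ (with abutment identified via $\H_{cdh}(\cY_n, R\phi_{m,n*}\rmE_{|\cX_{m,n}})\simeq \H_{cdh}(\cX_{m,n},\rmE)$), obtain the map of spectral sequences from the natural map $\rmE_{|\cY_n}\ra R\phi_{m,n*}(\rmE_{|\cX_{m,n}})$, and identify the stalks by evaluating on the Hensel local rings $\Spec\,\O_{\cY_n,y}^{cdh}$ of the cdh-site. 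The only cosmetic differences are that you invoke the standard hypercohomology spectral sequence where the paper writes out the exact couple by hand, and you build the second spectral sequence directly rather than as the special case $\cX_{m,n}=\cY_n$, $\phi_{m,n}=\mathrm{id}$.
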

\begin{proof} For an $\rmS^1$-spectrum $\rmE$, we will denote by $ \{\rmE[t, t] \ra \rmE[-\infty, t] \ra \rmE[-\infty, t-1]|t \}$ the 
 Postnikov tower. Considering this tower for the spectrum $\rmR\phi_{m, n*}(\rmE_{|\cX_{m,n}})$, one obtains the tower
 \[ \{\rmR\phi_{m,n*}(\rmE_{|\cX_{m,n}})[t, t] \ra \rmR\phi_{m,n*}(\rmE_{|\cX_{m,n}})[-\infty, t] \ra \rmR\phi_{m,n*}(\rmE_{|\cX_{m,n}})[-\infty, t-1]|t\}, \mbox{ and the tower }\]
 \[ \{\H_{cdh}(\cY_n, \holimm \rmR\phi_{m,n*}(\rmE_{|\cX_{m,n}})[t, t]) \ra  \H_{cdh}(\cY_n, \holimm \rmR\phi_{m,n*}(\rmE_{|\cX_{m,n}})[-\infty, t]\]
 \[\ra  \H_{cdh}(\cY_n, \holimm \rmR\phi_{m,n*}(\rmE_{|\cX_{m,n}})[-\infty, t-1] |t\}.\]
 On taking the homotopy groups of the last tower, one obtains the exact couple that provides the first spectral sequence. One may take 
 $\cX_{m,n}=\cY_n$ for all $m$ with $\phi_{m,n}$ the identity map, to obtain the second spectral sequence. Since the maps $\rmE_{|\cY_n} \ra R\phi_{m,n*}(\rmE_{|\cX_{m,n}})$
 are natural in $\rmE$, we obtain the map of the spectral sequences considered in the lemma. Recall that the $cdh$-site is finer than
 the Nisnevich site. Therefore, the stalk of $\O_{\cY_n}$ in the cdh-site, at the point $y \eps \cY_n$, is a Hensel ring and 
 the identification of the stalks of $\rmR\phi_{m,n*}(\rmE_{|\cX_{m,n}})$ follows.
\end{proof}
\begin{proposition}
 \label{indep.class.sp}
 Let $\rmG$ denote a linear algebraic group and $\rmH$ a closed linear algebraic subgroup so that $\rmH$ is special.
 Let $\{\rmB\rmG^{gm,m}|m\}$ ($\{\rmB\rmH^{gm,m}|m\}$) denote finite degree approximations to the classifying space
 of $\rmG$ ($\rmH$, \res) with $\{\rmE\rmG^{gm,m}|m \}$ ($\{\rmE\rmH^{gm,m}|m \}$) denoting the corresponding universal
  principal $\rmG$-bundle ($\rmH$-bundle, \res). Let $\rmX$ denote a scheme of finite type over the base scheme and provided
  with an action by $\rmG$. Then the inverse system of maps
  \be \begin{equation}
    \label{gen.comp.class.sp}
    \{\bKH(\rmE\rmG^{gm,m}\times _{\rmH} \rmX) \rightarrow \bKH(\rmE\rmG^{gm,m} \times  \rmE \rmH^{gm,m}) \times _{\rmH} \rmX) \leftarrow \bKH(\rmE\rmH^{gm,m} \times_{\rmH} \rmX)|m\}
 \end{equation} \ee
induce weak-equivalences on taking the homotopy inverse limit as $m \ra \infty$.
\end{proposition}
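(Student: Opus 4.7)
Denote the two maps in the zigzag by
\[
p_m: (E\rmG^{gm,m} \times E\rmH^{gm,m}) \times_{\rmH} X \longrightarrow E\rmG^{gm,m} \times_{\rmH} X
\quad\text{and}\quad
q_m: (E\rmG^{gm,m} \times E\rmH^{gm,m}) \times_{\rmH} X \longrightarrow E\rmH^{gm,m} \times_{\rmH} X,
\]
induced by the two projections out of $E\rmG^{gm,m} \times E\rmH^{gm,m}$. The plan is to show that each of $p_m$ and $q_m$ induces a weak equivalence upon applying $\holim_m\bKH(-)$; the two arguments are parallel, and each uses the hypothesis that $\rmH$ is special in an essential way.

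The first observation is that $p_m$ (resp.\ $q_m$) is a Zariski-locally trivial bundle with fibre $E\rmH^{gm,m}$ (resp.\ $E\rmG^{gm,m}$). Indeed, $\rmH\subset \rmG$ acts freely on $E\rmG^{gm,m}$, so $E\rmG^{gm,m}\times X \to E\rmG^{gm,m}\times_{\rmH} X$ is a principal $\rmH$-bundle; since $\rmH$ is special, it is Zariski-locally trivial, and its pullback along $p_m$ is the trivial bundle with fibre $E\rmH^{gm,m}$. The analogous statement for $q_m$ uses the principal $\rmH$-bundle $E\rmH^{gm,m}\times X \to E\rmH^{gm,m}\times_{\rmH} X$.

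Next I invoke Lemma \ref{ss} with $E=\bKH$: fixing $n$ and treating $q_m$ (the case of $p_m$ is symmetric), set $\mathcal{Y}_n=E\rmH^{gm,n}\times_{\rmH} X$ and $\mathcal{X}_{m,n}=(E\rmG^{gm,m}\times E\rmH^{gm,n})\times_{\rmH} X$ with $\phi_{m,n}=q_m$. Combined with the cdh-descent of $\bKH$, comparison of the two spectral sequences of Lemma \ref{ss} reduces the claim to the stalkwise assertion: for every cdh-stalk $A=\mathcal{O}^{cdh}_{\mathcal{Y}_n,y}$, the natural map
\[
\bKH(\Spec A) \longrightarrow \holim_m\bKH\bigl(\Spec A \times E\rmG^{gm,m}\bigr)
\]
is a weak equivalence, where I have used that a Zariski-locally trivial bundle on a Henselian local ring is trivial to simplify $\mathcal{X}_{m,n}\times_{\mathcal{Y}_n}\Spec A \cong \Spec A \times E\rmG^{gm,m}$. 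This last statement follows directly from Proposition \ref{EG.acyclic} (the ${\mathbb A}^1$-acyclicity of $E\rmG^{gm,\infty}$ on the cdh-site of $\Spec k$) together with the ${\mathbb A}^1$-invariance of $\bKH$. The argument for $p_m$ is identical, and the diagonal $\holim_m$ of the proposition follows by standard cofinality from the double limit.

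The main technical obstacle will be the interchange of the homotopy inverse limit over $m$ with the cdh-hypercohomology spectral sequence of Lemma \ref{ss}. This requires a uniform-in-$m$ lower bound on the connectivity of $\bKH$ applied to the Borel-style schemes involved, and is precisely where Theorem \ref{loc.seq}(ii) enters: it gives a uniform $(-n)$-connectivity that ensures both convergence of the spectral sequence and commutation of $\holim_m$ with the Postnikov tower. A subsidiary point, which explains why Proposition \ref{EG.acyclic} is stated on the cdh-site rather than only on the Nisnevich site, is that the stalk identification in Lemma \ref{ss} must be performed on Henselian cdh-stalks in order for the Zariski-local triviality of the $\rmH$-bundles to yield an outright trivialisation.
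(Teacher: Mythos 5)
Your proposal follows essentially the same route as the paper's proof: the same application of Lemma \ref{ss} with $\cY_n=\rmE\rmH^{gm,n}\times_{\rmH}\rmX$ (resp.\ $\rmE\rmG^{gm,n}\times_{\rmH}\rmX$) and $\cX_{m,n}=(\rmE\rmG^{gm,m}\times\rmE\rmH^{gm,n})\times_{\rmH}\rmX$, the same use of the fact that the relevant $\rmH$-torsors are Zariski-locally trivial because $\rmH$ is special, the same identification of the cdh-stalks, and the same appeal to Proposition \ref{EG.acyclic} together with the ${\mathbb A}^1$-invariance and cdh-descent of $\bKH$ to compare the two spectral sequences. The one place you diverge is the closing technical point: the paper does not need, and does not invoke, a uniform-in-$m$ connectivity bound or Theorem \ref{loc.seq}(ii). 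In Lemma \ref{ss} the homotopy inverse limit over $m$ is already built into the first spectral sequence (it sits inside $\holimm R\phi_{m,n*}$ over the \emph{fixed} base $\cY_n$), so strong convergence of both spectral sequences is guaranteed simply by the finite cdh-cohomological dimension of $\cY_n$ for each fixed $n$; one then takes $\holimn$ of the resulting fixed-$n$ equivalences at the very end. Note also that Theorem \ref{loc.seq}(ii) concerns the connectivity of \emph{equivariant} $\bKH$ with supports and would not in any case furnish a bound on $\bKH(\cX_{m,n})$ uniform in $m$ (the dimensions of the $\cX_{m,n}$ grow with $m$), so your proposed appeal to it should be replaced by the finite-cohomological-dimension argument above; with that substitution your argument coincides with the paper's.
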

\begin{proof} We invoke the last Lemma with the following choices: either 
  $\cX_{m,n} = (\rmE\rmG^{gm, m}  \times \rmE\rmH^{gm,n})\times_{\rmH} {\rm X}$ and  $\cY_{n}= \rmE\rmH^{gm,n}\times_{\rmH} {\rm X} $
 or $\cX_{m,n} = (\rmE\rmG^{gm, n}  \times \rmE\rmH^{gm,m})\times_{\rmH} {\rm X}$ and $\cY_n= \rmE\rmG^{gm, n} \times_{\rmH} {\rm X} $ with $\phi_{m,n}$ denoting the 
 obvious projections and $\rmE$ denoting the $\rmS^1$-spectrum representing homotopy K-theory. This spectrum will be denoted 
 $\bKH$. 
 \vskip .1cm
 We start with the observation that both $f: \rmE\rmG^{gm, m} \times \rmX \ra \rmE\rmG^{gm, m} \times_{\rmH} \rmX$ and
 $g: \rmE\rmH^{gm,n} \times \rmX \ra \rmE\rmH^{gm,n}\times_{\rmH} {\rm X}$ are $\rmH$-torsors. Therefore, we can find a
 Zariski open cover $\{\rmU_i|i\}$ of $\rmE\rmG^{gm, m} \times_{\rmH} \rmX$ and a Zariski open cover $\{\rmV_j|j\}$ of 
 $\rmE\rmH^{gm, m} \times_{\rmH} \rmX$ so that $f$ ($g$) is trivial over the cover $\{\rmU_i|i\}$ ($\{\rmV_j|j\}$, \res). From this, it follows readily that
  the induced map $(\rmE\rmG^{gm, m}  \times \rmE\rmH^{gm,n})\times_{\rmH} {\rm X} \ra \rmE\rmG^{gm, m} \times_{\rmH} {\rm X}$ restricted to
the Zariski open cover $\{\rmU_i|i\}$ looks like $\{\rmU_i \times \rmE\rmH^{gm,n} \ra \rmU_i|i\}$. Similarly the
induced map $(\rmE\rmG^{gm, m}  \times \rmE\rmH^{gm,n})\times_{\rmH} {\rm X} \ra \rmE\rmH^{gm, n} \times_{\rmH} {\rm X}$ restricted 
to the Zariski open cover $\{\rmV_j|j\}$ looks like $\{\rmV_j \times \rmE\rmG^{gm, m}  \ra \rmV_j|j\}$. Therefore, the fibers of
 the map $\colimm \phi_{m,n} : \colimm \cX_{m,n} \ra \cY_n$  for both choices of $\{\cY_n|n\}$ are ${\mathbb A}^1$-acyclic
 and each map $\phi_{m,n}$ is locally trivial on the Zariski site of $\cY_n$.
 \vskip .2cm
Therefore the stalks of $\colimm \phi_{m,n}$ in the cdh-site, at a point $y_n$ in $\cY_n$, identify with either
  \[\colimm \rmE\rmG^{gm,m}\times_{\Speck}Spec (\O_{\cY_n,y_n}^{cdh})\mbox{ or } \colimm \rmE\rmH^{gm,m}\times_{\Speck}Spec (\O_{\cY_n,y_n}^{cdh}),\]
  where $\O_{\cY_n,y_n}^{cdh}$ denotes the Hensel ring forming the stalk of $\O_{\cY_n}$ in the cdh-site at the point $y_n$. The ${\mathbb A}^1$-acyclicity of these objects is proven in Proposition ~\ref{EG.acyclic} making strong use of
  the hypotheses in (i) and (ii) in ~\ref{geom.class.space}.
 \vskip .1cm
  Recall that homotopy K-theory is an ${\mathbb A}^1$-invariant. It follows therefore that with the spectrum 
 $\rmE$ denoting the
 spectrum representing homotopy K-theory on the cdh-site, the map of spectral sequences considered in Lemma ~\ref{ss}
  induces an isomorphism at the $E_2$-terms. For a fixed $n$, the space $\cY_n $ being either $\rmE\rmG^{gm, n} \times_{\rmH} {\rm X}$ or $\cY_n=  \rmE\rmH^{gm,n}\times_{\rmH} {\rm X}$ 
  has finite cohomological dimension for the cdh topology, so that both the above spectral sequences converge strongly.
  This provides an isomorphism at the abutments of these spectral sequences, and therefore a weak-equivalence, for each fixed $n$:
  \[\holimm \H_{cdh}(\cX_{m,n}, \bKH) \simeq \H_{\it cdh}(\cY_{\it n}, \bKH).\]
  Finally one takes the homotopy inverse limit $\holimn$ to obtain the weak-equivalence:
  \[\holimn \holimm \H_{cdh}(\cX_{m,n}, \bKH) \simeq \holimn \H_{\it cdh}(\cY_{\it n}, \bKH).\]
  This  proves the proposition, since the spectrum representing homotopy K-theory has
  cdh-descent.
\end{proof}

\subsection{Remaining Key properties}
\label{KG.props}
\vskip .1cm
Assume as in ~\ref{remark.validity} that $\rmX$ is provided with an action by the linear algebraic group $\rmH$ and that $\rmG$ is a bigger linear
algebraic group containing $\rmH$ as a closed subgroup scheme.  

\begin{enumerate}[\rm(i)]

\item Let $\rmH$ denote a closed subgroup scheme of $\rmG$. Then, as observed above, one obtains the weak-equivalences: $\bK({\rm X}, \rmH) \simeq \bK(\rmG{\underset {\rmH} \times}{\rm X}, \rmG)$ and
$\bKH({\rm X}, \rmH) \simeq \bKH(\rmG{\underset {\rmH} \times}{\rm X}, \rmG)$.  Under the same hypotheses, one also obtains the weak-equivalences of inverse systems (that is,  a weak-equivalence on taking their homotopy inverse limits): 
\[\{\bK(\rmE\rmG^{gm, m}\times_{ \rmH}X)|m\} \simeq \{\bK(\rmE\rmG^{gm, m}{\underset {\rmG} \times}G{\underset {\rmH} \times}X)|m\}\,  {\it in \, general} \mbox{ and }\]
\[\{\bKH(\rmE\rmH^{gm, m}{\underset {\rmH}  \times}X)|m\} \simeq \{\bKH(\rmE\rmG^{gm, m}{\underset {\rmG} \times}G{\underset {\rmH}  \times}X)|m\},\]
when $\rmH$ is assumed to be {\it special}. (The first is clear and the second
follows readily in view of Proposition ~\eqref{indep.class.sp}.)
\vskip .1cm
For the remaining items (ii) through (vi), we will assume that the action of the subgroup $\rmH$ on $\rmX$ extends
to an action by the ambient group $\rmG$. Moreover, where these properties are used is in section 4, $\rmG$ will
denote a split reductive group, and $\rmH$ will denote either a Borel subgroup or a maximal split subtorus.
\item
The $\rmG$-equivariant flat  map $\pi:\rmG{\underset {\rmH}  \times}\rmX \ra {\rmX}$, $(gh^{-1}, hx) \mapsto gh^{-1}hx = gx$
induce a  map \newline \noindent  $\pi^*: \bK(\rmE\rmG^{gm, m}{\underset {\rmG} \times}\rmX) \ra \bK(\rmE\rmG^{gm, m }{\underset {\rmG} \times} (\rmG {\underset {\rmH}  \times}\rmX))$
 which identifies with the corresponding map obtained by restricting the group action from $\rmG$
to $\rmH$. A corresponding result also holds for equivariant homotopy K-theory $\bKH$ in the place of $\bK$.
\item
In view of the above properties, given any linear algebraic group $\rmG$, we will fix a closed imbedding $\rmG \ra GL_n$ for some $n$ and
identify 
\[\bK(\rmE{\GL_n}^{gm, m}{\underset {\rmG} \times}\rmX) \mbox{ with } \bK(\rmE{\GL_n}^{gm, m }{\underset {\GL_n} \times}(\GL_n{\underset {\rmG} \times}\rmX)) \mbox{ and } \]
\[\bKH(\rmE{\GL_n}^{gm, m}{\underset {\rmG} \times}\rmX) \mbox{ with } \bKH(\rmE{\GL_n}^{gm, m}{\underset {\GL_n} \times}(\GL_n{\underset {\rmG} \times}\rmX)).\]
\item
Next assume that ${\rmG}$ is a split reductive group over $\rmS$ and $\rmH=\rmB$, that is, $\rmH$ is a Borel subgroup  
of $\rmG$. Then using 
the observation that $\rmG/\rmB$ is {\it 
proper and smooth} over ${\rm S}$ and ${\rm R}^n \pi_* =0$ for $n$ large enough, one sees that the
  map $\pi$ also induces  push-forwards 
$\pi_*:  \bK(\rmE\rmG^{gm, m}{\underset {\rmG} \times} G{\underset {\rmB} \times}\rmX) \ra \bK(\rmE\rmG^{gm, m}{\underset {\rmG} \times}\rmX)$, induced by the derived direct image
functor ${\rm R}\pi_*$. (Such a derived direct image functor may be made functorial at the level of complexes by considering perfect-coherent complexes which
are also injective $\O_{\rmX}$-modules in each degree.) A corresponding result also holds for $\bK$ replaced by $\bKH$.
\item
Assume the above situation. Then the projection formula applied to $\rmR\pi_*$ shows that the composition $\rmR\pi_*\pi^*(F) = F \otimes \rmR\pi_*(\O_{\rmG{\underset {\rmB} \times}\rmX}) \cong 
F $, since 
\be \begin{align}
     \label{derived.direct}
\rmR^n \pi_*(\O_{\rmG{\underset {\rmB} \times}X}) &= \O_{\rmX}, \mbox{ if n=0 and}\\
&=0, \mbox{ if $n>0$}. \notag
    \end{align} \ee
\vskip .2cm \noindent
It follows $\pi^*$ is a split monomorphism in this case, with the splitting provided by $\pi_*$. (See ~\eqref{splittings} where this is applied 
to reduce equivariant $\bKH$-theory on the Borel construction with respect to the action of a split reductive group $\rmG$ to that of a Borel subgroup $\rmB$ and hence to that of a maximal torus $\rmT$.)
\item
Assume the situation as above with $\rmT=$ the maximal torus  in ${\rmG}$. Then $\rmB/\rmT = R_u(\rmB)$= an affine space. Now
one obtains the weak-equivalences: 
\[ \bKH(\rmE\rmB^{gm, m }{\underset {\rmB} \times}\rmX) \simeq \bKH(\rmE\rmB^{gm, m}{\underset {\rmB} \times}\rmB{\underset {\rmT} \times}\rmX) \simeq \bKH(\rmE\rmB^{gm, m}{\underset {\rmT} \times}\rmX) \simeq \bKH(\rmE\rmT^{gm, m}{\underset {\rmT} \times}\rmX)\]
 where the first weak-equivalences
are from the homotopy property of $\bKH$-theory and the observation that $\rmR_u(\rmB)$ is an affine space over $\rmS$. The last weak-equivalence is from Proposition ~\ref{indep.class.sp}. A corresponding result also holds for equivariant $\bKH$-theory replaced by equivariant $\bG$-theory.
\end{enumerate}

\section{Derived completions for equivariant Homotopy K-theory}
\label{der.compl.eq}
This section will be quick review, as almost all the basic results we need on {\it derived completions} have been worked out already in  \cite{C08} and \cite{CJ23}.
As before, we will assume that $\rmS$ is the spectrum of a field and that $\rmG$ is a linear algebraic group defined over $k$.
We denote the restriction  map 
\be \begin{equation}
\label{IG}
\bK(\rmS, \rmG) \ra \bK(\rmS) \mbox{ by } \rho_{\rmG} \mbox{ and the Postnikov-truncation map } \bK(\rmS) \ra \H(\pi_0(\bK(\rmS))) \mbox{ by } <0>.
\end{equation} \ee
\vskip .2cm \noindent
The composite map $\bK(\rmS, \rmG) \ra \H(\pi_0(\bK(\rmS)))$ will be denoted $\tilde \rho_{\rmG}$.  Observe that $\pi_0(\bK(\rmS, \rmG)) = R(\rmG)$= the representation ring of $\rmG$.
Clearly the restriction $\rho_{\rmG}$ induces a surjection on taking $\pi_0$: this is clear, since if $V$ is any finite dimensional representation of
$\rmG$ of dimension $d$, the $d$-th exterior power of $V$ will be a $1$-dimensional representation of $\rmG$. The  Postnikov-truncation map $<0>:  \bK(\rmS) \ra \H(\pi_0(\bK(\rmS)))$  clearly induces an 
isomorphism on $\pi_0$. Therefore, the hypotheses of \cite[Theorem 6.1]{C08} are satisfied and it shows that the 
derived completion functors with respect to the two maps $\rho_{\rmG}$ and $\tilde \rho_{\rmG}$ identify up to weak-equivalence.  Therefore, while we will
always make  use of the completion with respect to the map $\rho_{\rmG}$, we will often find it convenient to identify this completion with the completion with respect to
$\tilde \rho_{\rmG}$.
\vskip .2cm
Observe that the above completion $\rho_{\rmG}$ has the following explicit description.
Let $Mod(\bK(\rmS, \rmG))$ ($Mod(\bK(\rmS ))$) denote the category of module-spectra over $\bK(\rmS, \rmG)$ ($\bK(\rmS )$, \res). Then sending a module spectrum
$M \eps Mod(\bK(\rmS, \rmG))$ to $M \wedgeKG \bK(\rmS )$ and then viewing it as a $\bK(\rmS, \rmG)$-module spectrum using the ring map $\bK(\rmS, \rmG) \ra K(\rmS )$ defines
a triple. The corresponding  cosimplicial object of spectra is given by
\vskip .2cm
${\mathcal T}^{\bullet}_{\bK(\rmS, \rmG)}(M, \bK(\rmS )): M \cosimp1 M \wedgeKG \bK(\rmS )  \cosimp1 \cdots M \wedgeKG \bK(\rmS ) \wedgeKG \cdots \wedgeKG \bK(\rmS ) \cdots $
\vskip .2cm \noindent
with the obvious structure maps. Then the derived completion of $M$ with respect to $\rho_{\rmG}$  is  
\be \begin{equation}
     \label{der.compl.1}
M\, \, \compl_{\rho_{\rmG}} = \holimD {\mathcal T}^{\bullet}_{\bK(\rmS, \rmG)}(M, \bK(\rmS )).
\end{equation} \ee
The corresponding partial derived completion to degree $m$ will be denoted $\rho_{\rmG, m}$.
\vskip .2cm \noindent
We may also consider the following variant. 
\vskip .2cm
If $\ell$ is a prime different from the residue characteristics, $\rho_{\ell}: \mbS \ra \H(\Z/\ell)$ will denote the obvious map from the sphere spectrum to the mod$-\ell$ Eilenberg-Maclane spectrum.
Then we let
\be \begin{equation}
\label{IGl}
\rho_{\ell} \circ \rho_{\rmG}:\bK(\rmS, \rmG) \ra \bK(\rmS )/\ell= \bK(\rmS ) {\underset {\mbS} \wedge} \H({\mathbb Z}/\ell) 
\end{equation} \ee
\vskip .2cm \noindent
denote the composition of $\rho_{\rmG}$ and the mod-$\ell$ reduction map $id_{\bK(\rmS )} \wedge \rho_{\ell}$. 
The derived completion with respect to $\rho_{\ell} \circ \rho_{\rmG}$ clearly has a description similar to the one in ~\eqref{der.compl.1}.
\vskip .2cm
If $\rmX$ is a scheme or algebraic space provided with an action by the group $\rmG$, we let 
\be \begin{equation}
\begin{split}
     \label{KGl}
\bKH({\rm X}, \rmG)_{\ell} = {\overset {n} {\overbrace {\bKH({\rm X}, \rmG){\underset {\mbS} \wedge} \H(\Z/\ell) \cdots 
{{\underset {\mbS} \wedge } \H(\Z/\ell)}}}} \\
\bKH(\rmE\rmG^{gm}{\underset \rmG \times}X)_{\ell} = {\overset {n} {\overbrace {\bKH(\rmE\rmG^{gm}{\underset \rmG \times}X){\underset {\mbS} \wedge} \H(\Z/\ell) \cdots 
{{\underset {\mbS} \wedge } \H(\Z/\ell)}}}}
\end{split}
    \end{equation} \ee
\vskip .2cm \noindent 
for some positive integer $n$, where the smash products above are all derived smash products. If it becomes important to indicate the $n$ above, we will denote $\bKH({\rm X}, \rmG)_{\ell} $ 
($\bKH(\rmE\rmG^{gm}{\underset \rmG \times}X)_{\ell}$) by $\bKH({\rm X}, \rmG)_{\ell,n}$ ($\bK(\rmE\rmG^{gm}{\underset \rmG \times}X)_{\ell,n}$, \res).
One defines $\bK({\rm X}, \rmG)_{\ell}$ and $\bK(\rmE\rmG^{gm}{\underset \rmG \times}X)_{\ell}$ similarly.
\vskip .2cm
Next we proceed to compare the derived completions with respect to the map $\rho_{\rmG}:\bK(\rmS, \rmG) \ra \bK(\rmS )$ for a split reductive group $\rmG$
and the derived completion with respect to the map $\rho_{\rmT}:\bK(\rmS, \rmT) \ra \bK(\rmS )$, where $\rmT$ is a maximal torus in $\rmG$.  Given a module
spectrum $M$ over $\bK(\rmS, \rmT)$, $res_*(M)$ will denote $M$ with the induced $\bK(\rmS, \rmG)$-module structure, where
$res:\bK(\rmS, G) \ra \bK(\rmS, \rmT)$ denotes the restriction map. Clearly there is a map: $res_*(M) \ra M$ (which is the identity map on
the underlying spectra) compatible with the restriction map $\bK(\rmS, \rmG) \ra \bK(\rmS, \rmT)$. Therefore one obtains induced maps
$res_*(M){\overset L {\underset {\bK(\rmS, \rmG)} \wedge}} \bK(\rmS ) \cdots {\overset L {\underset {\bK(\rmS, \rmG)} \wedge}} \bK(\rmS ) \ra
M {\overset L {\underset {\bK(\rmS, \rmT)} \wedge}} \bK(\rmS ) \cdots {\overset L {\underset {\bK(\rmS, \rmT)} \wedge}} \bK(\rmS )$, which 
induces a map from the derived completion of $res_*(M)$ with respect to $\rho_{\rmG}$ to the derived completion of $M$ with respect 
to $\rho_{\rmT}$.
(Similar conclusions hold for the maps $\rho_{\ell} \circ \rho_{\rmG}$ and $\rho_{\ell} \circ \rho_{\rmT}$.) These observation enable us
to prove the following theorem: see \cite[Theorem 3.6]{CJ23}.

\begin{theorem}
\label{T.comp.vs.G.comp}
Assume that $\rmS = Spec \, \k$ for a field $k$ and that the split reductive group $\rmG$ has $\pi_1(\rmG)$ torsion-free. 
 Let $M$ be a $-n$-connected module spectrum over $\bK(\rmS, \rmT)$ for some integer $n$. Then the inverse system of maps defined above, 
\[\{res_*(M)  \, {\widehat {}}_{\rho _{\rmG},m} \ra M \, {\widehat {}}_{\rho_{\rmT},m}|m\}\]
induces a  weak-equivalence on taking the homotopy inverse limit as $m \ra \infty$. A corresponding result holds for the completions with respect to $\rho_{\ell} \circ \rho_{\rmG}$ and $\rho_{\ell} \circ \rho_{\rmT}$.
\end{theorem}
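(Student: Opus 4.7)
The plan is to leverage two inputs: the algebraic input that $R(\rmT)$ is finitely generated and free as an $R(\rmG)$-module under the torsion-free $\pi_1(\rmG)$ hypothesis (as recalled in section \ref{rep.ring}), and the equivalence of derived completions with respect to $\rho_\rmG$ and $\tilde\rho_\rmG = {<0>} \circ \rho_\rmG$ (and similarly for $\rmT$) provided by \cite[Theorem 6.1]{C08}, which we already invoked in the paragraph following \eqref{IG}. The second input reduces the problem to comparing the derived completion of $res_*(M)$ at the map $\bK(\rmS,\rmG) \to H(R(\rmG)) \to H(\mathbb{Z})$ with the derived completion of $M$ at $\bK(\rmS,\rmT) \to H(R(\rmT)) \to H(\mathbb{Z})$, i.e.\ ultimately to a comparison of completions along the augmentation ideals $I_\rmG \subset R(\rmG)$ and $I_\rmT \subset R(\rmT)$.

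Next I would carry out the algebraic heart of the argument. Since $R(\rmT)$ is finitely generated and free over $R(\rmG)$, the quotient $R(\rmT)/I_\rmG R(\rmT)$ is a finite rank free $\mathbb{Z}$-algebra that admits $\mathbb{Z}$ as a quotient via augmentation; a standard Nakayama-style argument shows that $\sqrt{I_\rmG R(\rmT)} = I_\rmT$. Consequently the $I_\rmG R(\rmT)$-adic and $I_\rmT$-adic topologies on any $R(\rmT)$-module agree up to cofinal towers, and so their derived completions agree. This is the algebraic analogue (for derived completion) of the classical Atiyah-Segal-type comparison.

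To lift this to the spectrum level, I would analyze the two cosimplicial towers $\mathcal{T}^\bullet_{\bK(\rmS,\rmG)}(res_*(M), \bK(\rmS))$ and $\mathcal{T}^\bullet_{\bK(\rmS,\rmT)}(M, \bK(\rmS))$ defining the partial completions $res_*(M)\,\widehat{}_{\rho_\rmG,m}$ and $M\,\widehat{}_{\rho_\rmT,m}$. The natural map of cosimplicial towers induced by the restriction $\bK(\rmS,\rmG) \to \bK(\rmS,\rmT)$ is, on $\pi_*$, governed by the algebraic comparison of the previous paragraph; the $-n$-connectivity hypothesis on $M$ ensures that the associated Bousfield--Kan spectral sequences converge strongly and that the $\lim^1$-terms in the resulting Milnor exact sequences vanish as $m \to \infty$. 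Hence the map of homotopy inverse limits is a weak equivalence. The mod-$\ell$ variant follows formally by smashing the entire construction with $H(\mathbb{Z}/\ell)$ throughout, since this operation commutes with the relevant homotopy (co)limits up to an appropriate connectivity shift.

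The main obstacle is the last step: while the algebraic comparison is straightforward, ensuring that the partial spectrum-level completions assemble into a weak equivalence in the limit requires careful control of the connectivity of the terms $M \wedge^L_{\bK(\rmS,\rmT)} \bK(\rmS) \wedge \cdots \wedge \bK(\rmS)$, which is precisely why the $-n$-connectivity hypothesis on $M$ appears in the statement. This hypothesis is not automatic in the $\bKH$ setting (in contrast with $\bG$-theory where spectra are $-1$-connected), and its verification for the module spectra to which we wish to apply this theorem is exactly the content of Theorem~\ref{loc.seq}(ii) and Proposition~\ref{KH.vanishing}.
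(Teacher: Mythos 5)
The paper does not actually reprove this statement: it constructs the comparison map and then quotes \cite[Theorem 3.6]{CJ23}, whose proof runs through the identification of the $m$-th partial derived completion with the cofiber of $\tilde I^{L\wedge(m+1)}\wedge M \ra M$ (this is \cite[Corollary 6.7]{C08}, used repeatedly later in the present paper, e.g.\ in Proposition~\ref{factoring.through.part.compl} and Lemma~\ref{comp.towers}), followed by an interleaving of the two towers of derived ideal powers. Your outline collects the right ingredients (finite freeness of $\rmR(\rmT)$ over $\rmR(\rmG)$, the reduction via \cite[Theorem 6.1]{C08}, an Atiyah--Segal style comparison of augmentation ideals, connectivity for convergence), but two steps are genuinely incomplete. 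On the algebraic side, $\sqrt{I_{\rmG}R(\rmT)}=I_{\rmT}$ is true but does not follow from a ``Nakayama-style argument'' applied to a finite free $\Z$-algebra with a $\Z$-quotient: $\Z\times\Z$ is such an algebra and the kernel of a projection is not its nilradical. One needs the Segal-type support argument (the fiber of $\Spec R(\rmT)\ra \Spec R(\rmG)$ over the augmentation prime is set-theoretically the single point $I_{\rmT}$, because the $W$-invariants separate the $W$-orbit of the identity), plus Noetherianness to get $I_{\rmT}^N\subseteq I_{\rmG}R(\rmT)\subseteq I_{\rmT}$.

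The substantive gap is the passage to spectra. Agreement of adic topologies on homotopy groups does not by itself identify Carlsson's derived completions: the two cosimplicial objects ${\mathcal T}^{\bullet}_{\bK(\rmS,\rmG)}(res_*M,\bK(\rmS))$ and ${\mathcal T}^{\bullet}_{\bK(\rmS,\rmT)}(M,\bK(\rmS))$ are not termwise equivalent (already in cosimplicial degree one their homotopy involves Tor over $\pi_*\bK(\rmS,\rmG)$ versus over $\pi_*\bK(\rmS,\rmT)$), so asserting that the map is ``governed by'' the algebraic comparison and that the Bousfield--Kan spectral sequences converge is not an argument; nor is the claimed vanishing of $\lim^1$ terms as $m\ra\infty$ the relevant issue, since the derived completion \emph{is} the homotopy inverse limit of the partial completions and $\lim^1$ contributes on both sides --- what must be proved is that the map of towers of partial completions induces an equivalence on holim. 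The missing mechanism is precisely the cofiber description above, after which the containments $I_{\rmT}^N\subseteq I_{\rmG}R(\rmT)\subseteq I_{\rmT}$ have to be lifted to maps between the smash powers $I_{\rmG}^{L\wedge m}{\underset{\bK(\rmS,\rmG)}\wedge}M$ and $I_{\rmT}^{L\wedge m}{\underset{\bK(\rmS,\rmT)}\wedge}M$ interleaving the two pro-systems; it is there, and in controlling the connectivity of these smash powers, that the $-n$-connectivity of $M$ and the finite freeness of $\bK(\rmS,\rmT)$ as a $\bK(\rmS,\rmG)$-module enter --- not in a spectral-sequence convergence statement. Finally, the mod-$\ell$ assertion concerns completion along $\rho_{\ell}\circ\rho_{\rmG}$, i.e.\ along $\bK(\rmS,\rmG)\ra \bK(\rmS)/\ell$; it is not obtained by smashing the integral statement with $\H(\Z/\ell)$, both because that is a different triple and because smash products do not commute with the homotopy inverse limits involved without a separate argument (compare the care taken in Corollary~\ref{der.comp.negK}); it should instead be proved by running the same argument with $\bK(\rmS)/\ell$ in place of $\bK(\rmS)$.
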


\begin{remark} One may take $M= \bKH(\rmX, T)$: then Proposition ~\ref{KH.vanishing} shows that $M= \bKH(\rmX, T)$ is $-n$-connected for large $n$.
 For the applications in the rest of the paper, we will restrict mostly to the case where the group $\rmG=\GL_n$ for some $n$ or a finite product $\Pi_{i=1}^m \GL_{n_i}$, since the
hypotheses of Theorem ~\ref{T.comp.vs.G.comp}are satisfied in this case.
\end{remark}

\section{Reduction to the case of a torus in ${\rmG}$ and proof of Theorem ~\ref{main.thm.1}}
\label{reduct.torus}
Recall the following hypotheses of Theorem ~\ref{main.thm.1}: let $\rmX$ denote a scheme of finite type over the
base scheme $\rmS = Spec \, {\it k}$, and provided with an action by the not-necessarily connected linear algebraic group $\rmG$.
Let $\tilde \rmG$ denote a finite product of $\GL_n$s containing $\rmG$ as a closed subgroup scheme. 
We begin with the following Proposition which shows we may reduce to just considering actions by the ambient group $\tilde \rmG$.
\begin{proposition}
 If the derived completion with respect to the map $\rho_{\tilde \rmG}$ induces a weak-equivalence:
 \[\bKH(\tilde {\rmG}{\underset {\rmG}  \times}\rmX, \tilde {\rmG})\compl_{\rho_{\tilde \rmG}} {\overset {\simeq} \ra} \bKH(\rmE\tilde \rmG^{gm}{\underset {\tilde \rmG} \times} (\tilde \rmG{\underset {\rmG} \times}\rmX))\]
then one obtains a weak-equivalence:
\[\bKH(\rmX, \rmG)\compl_{\rho_{\rmG}} {\overset {\simeq} \ra} \bKH(\rmE\tilde \rmG^{gm}{\underset {\tilde \rmG} \times} (\tilde \rmG{\underset {\rmG} \times}\rmX)) \simeq \bKH(\rmE\tilde \rmG^{gm}{\underset {\rmG} \times} \rmX).\]
\end{proposition}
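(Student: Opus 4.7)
The plan is to deduce this proposition as an immediate consequence of Proposition~\ref{key.obs.2}(iii), the hypothesized weak-equivalence, and a formal identification of balanced products. First, I would invoke Proposition~\ref{key.obs.2}(iii), which gives the chain of weak-equivalences
\[
\bKH(\rmX, \rmG)\compl_{\rho_{\rmG}} \ \simeq\ \bKH(\tilde \rmG{\underset {\rmG} \times}\rmX, \tilde \rmG)\compl_{\rho_{\tilde \rmG }}.
\]
Note that the hypotheses of Proposition~\ref{key.obs.2} are applicable here because $\tilde \rmG$ is by assumption a finite product of $\GL_n$s, for which the restriction $\rmR(\tilde \rmG \times \rmG) \ra \rmR(\tilde \rmG)$ and $\rmR(\tilde \rmG \times \rmG) \ra \rmR(\rmG)$ are split-surjective, and the connectivity hypothesis on $\bKH$ from Theorem~\ref{loc.seq}(ii) (combined with Proposition~\ref{KH.vanishing}) is in force under the standing hypotheses on~$\rmX$.

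Next I would splice in the hypothesized weak-equivalence
\[
\bKH(\tilde {\rmG}{\underset {\rmG}  \times}\rmX, \tilde {\rmG})\compl_{\rho_{\tilde \rmG}} \ {\overset {\simeq} \ra}\ \bKH(\rmE\tilde \rmG^{gm}{\underset {\tilde \rmG} \times} (\tilde \rmG{\underset {\rmG} \times}\rmX)),
\]
obtaining the first weak-equivalence of the conclusion by composition. It remains to establish the second weak-equivalence of the conclusion, namely
\[
\bKH(\rmE\tilde \rmG^{gm}{\underset {\tilde \rmG} \times} (\tilde \rmG{\underset {\rmG} \times}\rmX)) \ \simeq \ \bKH(\rmE\tilde \rmG^{gm}{\underset {\rmG} \times} \rmX).
\]
For this, I would argue at the level of the underlying ind-schemes: the associativity of the balanced product construction (together with the observation from ~\ref{remark.validity}(iii) that the induced action of $\tilde \rmG$ on $\tilde \rmG{\underset {\rmG} \times}\rmX$ is the left-translation action) yields a canonical isomorphism
\[
\rmE\tilde \rmG^{gm,m}{\underset {\tilde \rmG} \times} \bigl(\tilde \rmG{\underset {\rmG} \times}\rmX\bigr) \ \cong\ \rmE\tilde \rmG^{gm,m}{\underset {\rmG} \times} \rmX
\]
at each finite stage $m$, functorially in $m$. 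Applying $\bKH$ and passing to the homotopy inverse limit (following Definition~\ref{Borel.equiv.th}) yields the desired weak-equivalence.

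The argument is essentially formal, so there is no substantial obstacle; the main thing to be careful about is that the hypotheses of Proposition~\ref{key.obs.2}, particularly the connectivity statement of Theorem~\ref{loc.seq}(ii) that underlies its proof, genuinely apply to the class of $\rmG$-schemes under consideration in the standing hypothesis~\ref{stand.hyp.1}. Since $\tilde \rmG{\underset {\rmG} \times}\rmX$ remains a normal quasi-projective scheme when $\rmX$ is so (as noted in ~\ref{remark.validity}(ii)), and since the ambient group $\tilde \rmG$ is a finite product of $\GL_n$s, all the prerequisites are met.
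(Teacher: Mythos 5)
Your proof is correct and takes essentially the same route as the paper: invoke Proposition~\ref{key.obs.2} to identify $\bKH(\rmX, \rmG)\compl_{\rho_{\rmG}}$ with $\bKH(\tilde\rmG{\underset{\rmG}\times}\rmX, \tilde\rmG)\compl_{\rho_{\tilde\rmG}}$, splice in the hypothesized weak-equivalence, and observe the balanced-product identification $\rmE\tilde\rmG^{gm}{\underset{\tilde\rmG}\times}(\tilde\rmG{\underset{\rmG}\times}\rmX) \cong \rmE\tilde\rmG^{gm}{\underset{\rmG}\times}\rmX$ at each finite stage. The only presentational difference is that you spell out the balanced-product cancellation and the hypothesis check for Proposition~\ref{key.obs.2} in slightly more detail, where the paper declares these steps ``clear.''
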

\begin{proof} 
We first make use of 
the identification $\bKH({\rm X}, \rmG) \simeq \bKH(\tilde {\rmG}{\underset {\rmG}  \times}{\rm X}, \tilde {\rmG})$ (see Lemma ~\ref{key.pairings}),  and Proposition ~\ref{key.obs.2} 
to obtain the weak-equivalence: $\bKH(\rmX, \rmG) \compl_{\rho_{\rmG}} \simeq \bKH(\tilde {\rmG}{\underset {\rmG}  \times}\rmX, \tilde {\rmG})\compl_{\rho_{\tilde \rmG}}$. 
The identification 
\newline 
$\bKH(\rmE\tilde \rmG^{gm}{\underset {\rmG}  \times}\rmX) \simeq 
\bKH(\rmE\tilde {\rmG}^{gm}{\underset {\tilde \rmG}  \times}(\tilde {\rmG}{\underset {\rmG}  \times}\rmX))$ is clear.
\end{proof}
Observe also that if $\rmX$ is a {\it normal quasi-projective} scheme over $\rmS$, then so is $\tilde \rmG {\underset \rmG \times} \rmX$. Therefore, we may assume without loss of generality, in the rest of this section that $\rmG= GL_n$ for some $n \ge 1$ or a finite product of such ${\rm GL}_n$s.
\vskip .2cm
Next we recall the construction of the geometric classifying spaces when the linear algebraic group $\rmG$ is ${\rm GL}_n$, a finite product
of ${\rm GL}_{n_j}s$ and when $\rmG =\rmT$ is a split torus. In the case of
 ${\rm GL_n}$, we let $\rmV$ denote a faithful representation of ${\rm GL}_n$, $\rmW = End(\rmV)$ and then choose $\rmE\rmG^{gm,i}$ as in 
 ~\eqref{adm.gadget.1}, ~\eqref{EG.fin}. When $\rmG = \Pi_{j=1}^m {\rm GL}_{n_j}$, we let $\rmE\rmG^{gm,i}= \Pi_{j=1}^m \rmE{\rm GL}_{n_j}^{gm,i}$.
 When $\rmG= \rmT$ is a split torus, we let $\rmE\rmG^{gm,i}$ be defined as in ~\eqref{EG.toric.case}.
\vskip .2cm
Then we obtain  the following preliminary result.
\begin{proposition} (See \cite[Proposition 2.2]{CJ23}.)
 \label{coveringlemma} If $\rmG=\GL_n$, a finite product of $\GL_n$s or a split torus, then there exists a finite Zariski open covering, $\{V_{\alpha}|\alpha\}$ of each $\rmB\rmG^{gm,i}$ with each
$\rmV_{\alpha}$ being ${\mathbb A}^1$-acyclic, that is,  each $\rmV_{\alpha}$ has a $k$-rational point $p_{\alpha}$ so that $\rmV_{\alpha}$ and $p_{\alpha}$ are equivalent in the ${\mathbb A}^1$-homotopy category.
In fact, one may choose a common $k$-rational point $p$ in the intersection of all the open $\rmV_{\alpha}$. Moreover, in all three cases, one may choose
a Zariski open covering of $\rmB\rmG^{gm,i}$ of the above form,  to consist of $i+1$ or more open sets.
\end{proposition}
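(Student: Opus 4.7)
The plan is to construct the required coverings explicitly in each of the three cases, using the natural affine charts arising from the construction of $\rmB\rmG^{gm,i}$ in Section 3.1, and to exhibit a distinguished ``diagonal'' rational point lying in their common intersection.

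For a split torus $\rmG = \rmT = \GG_m^n$, recall from \eqref{EG.toric.case} that $\rmB\rmT^{gm,i} = (\PP^i)^{\times n}$. I would use the product of the standard coordinate coverings of each $\PP^i$: for each multi-index $(j_1,\ldots,j_n) \in \{0,1,\ldots,i\}^n$, set $V_{(j_1,\ldots,j_n)} = U_{j_1}^{(1)} \times \cdots \times U_{j_n}^{(n)}$, where $U_j^{(k)} \subset \PP^i$ is the $k$-th factor's standard chart $\{[x_0:\cdots:x_i] : x_j \neq 0\} \cong \AA^i$. Each $V_{(j_1,\ldots,j_n)} \cong \AA^{in}$ is $\AA^1$-contractible and so $\AA^1$-acyclic, and these $(i+1)^n \geq i+1$ opens form the desired cover; the rational point whose entry in every factor is $(1{:}1{:}\cdots{:}1)$ lies in each chart.

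For $\rmG = \GL_n$, take $\rmW = M_n$ and $\rmU = \GL_n$ in \eqref{adm.gadget.1}, so that \eqref{EG.fin} expresses $\rmE\GL_n^{gm,i}$ as the union $\bigcup_{j=0}^{i} \widetilde V_j$ inside $M_n^{\times (i+1)}$, with $\widetilde V_j = M_n^{j} \times \GL_n \times M_n^{i-j}$ the open locus whose $j$-th coordinate is invertible. Each $\widetilde V_j$ is stable under the (say, right) simultaneous $\GL_n$-action, and on $\widetilde V_j$ this free action admits a unique orbit representative with $I$ in the $j$-th slot obtained by multiplying by $A_j^{-1}$; hence $V_j := \widetilde V_j / \GL_n \cong M_n^{i} \cong \AA^{n^2 i}$ is $\AA^1$-contractible. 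The $V_j$ form an open cover of $\rmB\GL_n^{gm,i}$ consisting of exactly $i+1$ pieces, and the image in $\rmB\GL_n^{gm,i}$ of the tuple $(I,\ldots,I)$ provides a $k$-rational point lying in every $V_j$. For a finite product $\rmG = \prod_{k=1}^{m} \GL_{n_k}$, I would take $\rmE\rmG^{gm,i} = \prod_{k=1}^{m}\rmE\GL_{n_k}^{gm,i}$ and form the product of the coverings just constructed, giving $(i+1)^m \geq i+1$ affine-space pieces sharing the common rational point given by the identity in every factor.

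The only substantive verification is in the $\GL_n$ case, namely that the geometric quotient $\widetilde V_j / \GL_n$ is genuinely the affine space $\AA^{n^2 i}$ and that the $V_j$ glue into an honest Zariski cover of $\rmB\GL_n^{gm,i}$. The first follows from the unique-representative observation above (together with $\widetilde V_j$ being affine, so the free-action quotient is the expected affine scheme), and the second is immediate because $\{\widetilde V_j\}$ is already a $\GL_n$-stable Zariski open cover of $\rmU_{i+1}$ and the formation of the geometric quotient by a free action commutes with restriction to such $\GL_n$-stable opens. No further technical obstacle arises, since the distinguished point $(I,\ldots,I)$ (or its torus analogue) visibly sits in every member of the cover.
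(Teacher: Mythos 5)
Your construction is correct, and it is essentially the argument behind the result: the paper itself gives no proof here but defers to \cite[Proposition 2.2]{CJ23}, where the covering is obtained in the same way, namely by the standard affine charts on $({\mathbb P}^i)^{\times n}$ in the torus case and, for $\GL_n$ (and products thereof), by the $i+1$ opens of $\rmB\GL_n^{gm,i}$ where a given matrix slot is invertible, each a trivial $\GL_n$-torsor quotient isomorphic to an affine space, with the image of $(I,\dots,I)$ (resp. the all-ones point) serving as the common $k$-rational point. No gaps; your verification that the quotients of the $\GL_n$-stable opens are affine spaces and glue to a Zariski cover of the quotient is exactly what is needed.
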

\vskip .2cm
\begin{remark}
\label{two.class.spaces}
Let $\rmG=\rmT={\mathbb G}_m^n$. Now we have already two constructions for $\rmE\rmT^{gm, i}$.  The first, which we
denote by  $\rmE\rmT^{gm,i}(1)$ starts with $\rmE\rmT^{gm,1}(1) = \GL_n$ with the successive $\rmE\rmT^{gm,i}(1)$ obtained by applying the construction in 
~\eqref{adm.gadget.1} with $\rmE\rmT^{gm,i}(1) = \rmE\GL_n^{gm,i}$. Let $\rmE\rmT^{gm, i}(2)$ denote the second construction with $\rmE\rmT^{gm,i} = ({\mathbb A}^{i+1}-0)^n$ as in ~\eqref{EG.toric.case}. The explicit relationship between two such constructions
is often delicate. The main result that is relevant to us in this context is Proposition ~\ref{indep.class.sp}.
\end{remark}

\begin{proposition} 
\label{factoring.through.part.compl}
Let $\rmG$ denote $\GL_n$ for some $n$ or a split torus. Let $\rmX$ denote a scheme  of finite 
type over $k$. Then, with the choice of the classifying spaces as in ~\eqref{adm.gadget.1} and ~\eqref{EG.toric.case}, the following hold.
\vskip .1cm
(i) If $\rmI_G$ denotes 
the homotopy fiber of the restriction map
 $\bK(\Speck, \rmG) \ra \bK(\Speck)$, then the (obvious) map 
\[\bKH({\rm X}, \rmG) \ra \bKH(\rmE\rmG^{gm, m} \times {\rm X}, \rmG) 
\simeq \bKH(\rmE\rmG^{gm,m} \times _{\rmG} \rmX)\]
 factors
through $\bKH({\rm X}, \rmG)/\rmI_{\rmG}^{Lm'+1}\bKH({\rm X}, \rmG)$ for some positive integer $m' \ge m$. Here
\[\rmI_{\rmG}^{L m'+1}\bKH({\rm X}, \rmG) = {\overset {m'+1} {\overbrace { \rmI_{\rmG} {\overset L {\underset {\bK(\Speck, \rmG)} \wedge}}  \rmI_{\rmG}  \cdots {\overset L {\underset {\bK(\Speck, \rmG)} \wedge}}  \rmI_{\rmG}}}}{\overset L {\underset {\bK(\Speck, \rmG)} \wedge}} \bKH({\rm X}, \rmG),\]
$\rmI_{\rmG}$ is the homotopy fiber of the restriction $\bK(\Speck, \rmG) \ra \bK(\Speck)$ and 
$\bKH({\rm X}, \rmG)/\rmI_{\rmG}^{Lm'+1}\bKH({\rm X}, \rmG)$ is the homotopy cofiber of the map $\rmI_{\rmG}^{Lm'+1}\bKH({\rm X}, \rmG) {\overset L {\underset {\bK(\Speck, \rmG)} \wedge}} \bKH({\rm X}, \rmG) \ra \bKH({\rm X}, \rmG)$.
\vskip .1cm
(ii) It follows that the map $\bKH({\rm X}, \rmG) \ra  
\bKH(\rmE\rmG^{gm,m} \times {\rm X}, \rmG)$ factors through the partial
derived completion $\bKH({\rm X}, \rmG)\compl_{\rho_{\rmG}, m'}$ for some positive integer $m' \ge m+1$. If $\pi: \rmX \ra \rmY$ is any $\rmG$-equivariant map ($\rmG$-equivariant proper map which is also perfect as in Definition ~\ref{pseudo.coh}), then
the above factorization is compatible with the induced map $\pi^*:\bKH(\rmY, \rmG) \ra \bKH({\rm X}, \rmG)$ 
(the induced map $\pi_*:\bKH({\rm X}, \rmG) \ra \bKH(\rmY, \rmG)$, \res.)
\vskip .2cm
(iii) More generally, if $\rmG_i$, $i=1, \cdots, q$ are either general linear groups or split tori, the map
\[\bKH({\rm X}, \rmG_1 \times \cdots \times \rmG_q) \ra \bKH(\rmE\rmG_1^{gm, m} \times \cdots \times \rmE\rmG_q^{gm, m} \times {\rm X}, \rmG_1 \times \cdots \times \rmG_q)\]
factors through 
\[\bKH({\rm X}, \rmG_1 \times \cdots \times \rmG_q) \compl_{\rho_{\rmG_1 \times \cdots \rmG_q}, m'}\] for 
some positive integer $m' \ge m+1$. 
 If $\pi:\rmX \ra \rmY$ is any equivariant map with respect to the action of $\rmG= \rmG_1 \times \cdots \times \rmG_q$ map (which is also proper and  perfect as well), then
the above factorization is compatible with the induced map $\pi^*:\bKH(\rmY, \rmG_1 \times \cdots \times \rmG_q) \ra \bKH({\rm X}, \rmG_1 \times \cdots \times \rmG_q)$ (the induced map $\pi_*:\bKH({\rm X}, \rmG_1 \times \cdots \times \rmG_q) \ra \bKH(\rmY, \rmG_1 \times \cdots \times \rmG_q)$, \res.)
\end{proposition}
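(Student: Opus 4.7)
The plan is to prove (i) via a multiplicative supports argument built on the Zariski covering supplied by Proposition~\ref{coveringlemma}, and derive (ii) and (iii) as direct consequences. Set $W = \rmE\rmG^{gm,m}\times_\rmG \rmX$ and let $\mu: \bKH(\rmX, \rmG) \to \bKH(W)$ be the pullback map, which is a map of ring spectra (induced by the map of stacks $W \to [\rmX/\rmG]$). The partial derived completion $\bKH(\rmX, \rmG)\compl_{\rho_\rmG, m'}$ is identified, via the cosimplicial description of Section~\ref{der.compl.eq}, with a homotopy cofiber built from a suitable power of $\rmI_\rmG$ acting on $\bKH(\rmX, \rmG)$; hence to establish both (i) and (ii) it suffices to show that for some $m' \ge m$ the composite
\[\rmI_\rmG^{L, m'+1}\bKH(\rmX, \rmG) \to \bKH(\rmX, \rmG) \xrightarrow{\mu} \bKH(W)\]
is null-homotopic.

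By Proposition~\ref{coveringlemma}, choose a Zariski open cover $\{V_\alpha\}_{\alpha=1}^{N}$ of $\rmB\rmG^{gm,m}$ with $N \ge m+1$. Inspection of the constructions \eqref{adm.gadget.1} and \eqref{EG.toric.case} shows that the $\rmG$-torsor $\rmE\rmG^{gm,m} \to \rmB\rmG^{gm,m}$ is trivial over each $V_\alpha$: for $\rmG = \GL_n$ the pieces are essentially of the form $\rmU \times \rmW^j$ with $\rmU = \GL_n$ providing a section, and the argument for split tori is analogous (affine charts of a product of projective spaces). Writing $W_\alpha$ for the pullback of $V_\alpha$ to $W$, one obtains $W_\alpha \cong V_\alpha \times \rmX$, and the composite
\[\bKH(\rmX, \rmG) \xrightarrow{\mu} \bKH(W) \to \bKH(W_\alpha) = \bKH(V_\alpha \times \rmX)\]
factors through the restriction $\rho_\rmG^\rmX : \bKH(\rmX, \rmG) \to \bKH(\rmX)$ to the trivial subgroup, followed by pullback along the projection $V_\alpha \times \rmX \to \rmX$. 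Since $\rmI_\rmG$ is by definition the homotopy fiber of $\rho_\rmG$, we conclude that $\mu(\rmI_\rmG \cdot \bKH(\rmX, \rmG)) \subseteq J_\alpha := \mathrm{fib}(\bKH(W) \to \bKH(W_\alpha))$, and by the localization property of $\bKH$ in \ref{KH.props}(5), $J_\alpha \simeq \bKH(W \text{ on } W \setminus W_\alpha)$.

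The conclusion of (i) then follows from the multiplicative supports argument: the ring structure on $\bKH(W)$ restricts to pairings
\[\bKH(W \text{ on } Z_1) \wedge \bKH(W \text{ on } Z_2) \to \bKH(W \text{ on } Z_1 \cap Z_2),\]
so the $N$-fold product $J_1 \cdots J_N$ is contained in $\bKH(W \text{ on } \bigcap_\alpha (W\setminus W_\alpha)) = \bKH(W \text{ on } \emptyset) \simeq *$, the last since the $W_\alpha$ cover $W$. Multiplicativity of $\mu$ forces $\mu(\rmI_\rmG^{L, N} \bKH(\rmX, \rmG)) \subseteq J_1 \cdots J_N \simeq *$, so $m' = N - 1 \ge m$ works. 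Part (ii) is then immediate, and the functoriality claims in both parts follow from the naturality of all constructions used, with $\pi_*$ for proper perfect morphisms being well-defined by the preservation of perfect complexes under derived pushforward (cf.\ Definition~\ref{pseudo.coh}). For (iii), the group $\rmG_1 \times \cdots \times \rmG_q$ is itself a finite product of general linear groups and split tori, so Proposition~\ref{coveringlemma} applies to the product classifying space $\prod_j \rmB\rmG_j^{gm,m}$ and the above proof carries over verbatim.

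The main obstacle I anticipate is the rigorous realization of the multiplicative supports step at the level of $S^1$-spectra: although the analogous ring-theoretic statement is classical, verifying that the smash-product pairings on $\bKH(W)$ intertwine correctly with the localization sequences so that the support condition propagates through iterated products requires care with the model-categorical structure, specifically with how pairings of Frobenius-category $K$-theories from Section~\ref{KN.KH.def} behave under the homotopy colimit defining $\bKH$ from $\mathbb{K}$.
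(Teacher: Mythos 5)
Your proof takes a genuinely different route from the paper's. The paper's proof establishes part (i) by first citing \cite[Proposition 4.5]{CJ23}: the ring map $\bK(\Speck, \rmG) \to \bK(\rmE\rmG^{gm,m}, \rmG)$ factors through $\bK(\Speck, \rmG)/\rmI_\rmG^{L\wedge n}$ for some $n \ge m$. That base case, since $\rmE\rmG^{gm,m}$ is smooth and quasi-projective and no $\rmX$ appears, is the Atiyah--Segal supports argument run in its most classical setting. The general case is then obtained by a purely formal module-theoretic diagram chase: $\bKH(\rmX, \rmG)$ is a $\bK(\Speck, \rmG)$-module spectrum, and the homotopy-commutative square \eqref{null.homotopic} transports the null-homotopy $\rmI_\rmG^{L\wedge^n} \to \bK(\rmE\rmG^{gm,m}, \rmG)$ to the required null-homotopy for the composite $\rmI_\rmG^{L\wedge^n}\wedge\bKH(\rmX,\rmG)\to\bKH(\rmE\rmG^{gm,m}\times\rmX,\rmG)$. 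You instead run the supports argument directly on $\bKH(W)$ with $W = \rmE\rmG^{gm,m}\times_\rmG\rmX$; this avoids the base-case-plus-transfer decomposition but pays for it exactly where you yourself flag the gap, since one must establish the iterated supports-respecting pairings and coherence of the multiplicative factorization for $\bKH$-theory of a possibly singular $W$. The paper's two-step route deliberately sidesteps this by confining the supports argument to the smooth classifying-space approximations; the trade-off is a dependence on an external reference. One additional caution: your functoriality remark is thinner than what is needed. The paper derives compatibility of the factorizations with $\pi^*$ and $\pi_*$ from the single observation that both are maps of $\bK(\Speck, \rmG)$-module spectra, hence induce maps of the explicit diagrams \eqref{null.homotopic} with the same imported null-homotopy; in your setup the null-homotopy is constructed ad hoc for each $W$, so one must verify that the null-homotopies for $W_\rmX$ and $W_\rmY$ are compatible with the proper pushforward along $W_\rmX \to W_\rmY$, and this does not follow from a bare appeal to naturality.
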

\begin{proof} This parallels the original proof in \cite{AS69} for the usual completion for equivariant topological K-theory. We proved
a corresponding result for equivariant G-theory in \cite[Proposition 4.5]{CJ23}. There, some of the key arguments needed are shown
 to hold for equivariant K-theory, and then one makes use of the fact that the equivariant G-theory spectrum is a module spectrum over
  the corresponding equivariant K-theory spectrum. In fact, one may invoke the proof there to observe that the map
\[\bK(\Speck, \rmG) \ra \bK(\rmE\rmG^{gm, m}, \rmG) \mbox { factors through } \bK(\Speck, \rmG)/\rmI_{\rmG}^{L\wedge ^n}\]
for some $n \ge m$, where $\bK(\Speck, \rmG)/\rmI_{\rmG}^{L\wedge ^n}$ is the homotopy cofiber of the map $\rmI_{\rmG}^{L\wedge ^n} \ra \bK(\Speck, \rmG)$. Finally one makes use of the pairing:
$\bK(\Speck, \rmG) {\overset L \wedge} \bKH({\rm X}, \rmG) \ra \bKH({\rm X}, \rmG)$ and the homotopy commutative square (where the vertical maps are the obvious ones)
\be \begin{equation}
     \label{null.homotopic}
\xymatrix{{{\rmI_{\rmG}^{L\wedge ^n}}  {\overset {\rm L} {\wedge}} \bKH({\rm X}, \rmG)} \ar@<1ex>[r] \ar@<1ex>[d] & {\bK(\Speck, \rmG)  {\overset L {\wedge}} \bKH({\rm X}, \rmG)} \ar@<1ex>[r] \ar@<1ex>[d] & {\bKH({\rm X}, \rmG)} \ar@<1ex>[d]\\
{\bK(\rmE\rmG^{gm, m}, \rmG)  {\overset L {\wedge}} \bKH({\rm X}, \rmG)} \ar@<1ex>[r] & {\bK(\rmE\rmG^{gm, m}, \rmG)  {\overset L {\wedge}} \bKH({\rm X}, \rmG)} \ar@<1ex>[r] & \bKH(\rmE\rmG^{gm,m} \times {\rm X}, \rmG),}
\end{equation} \ee
\vskip .2cm \noindent
which shows the composite map from the top left-corner to the bottom-right corner is null-homotopic.  
This proves the first statement since we have let $m'=n\ge m+1$.  
\vskip .2cm
Next we consider the second statement as well as the functoriality of the factorization in (i). Recall (see \cite[Corollary 6.7]{C08}) that the partial derived 
completion $\bKH({\rm X}, \rmG)\compl_{\rho_{\rmG}, m'} =$ 
the homotopy cofiber of the map
\[{\overset {m'+1} {\overbrace {\tilde \rmI_{\rmG} {\underset {\bK(\Speck, \rmG)} \wedge} \tilde \rmI_{\rmG}  \cdots {\underset {\bK(\Speck, \rmG)} \wedge} \tilde \rmI_G}}} 
 {\underset {\bK(\Speck , \rmG)} \wedge} {\widetilde {\bKH({\rm X}, \rmG)}} \ra 
 \rmI_G^{\wedge^{m'+1}} {\underset {\bK(\Speck , \rmG)} \wedge} {\widetilde {\bKH({\rm X}, \rmG)}} \]
\[ \ra \bK(\Speck, \rmG) {\underset {\bK(\Speck , \rmG)} \wedge} {\widetilde {\bKH({\rm X}, \rmG)}} = {\widetilde {\bKH({\rm X}, \rmG)}}\]
which maps into  $(\bK(\Speck, \rmG)/I_{\rmG}^{L\wedge^{m'+1}} ){\underset {\bK(\Speck , \rmG)} \wedge} {\widetilde {\bKH({\rm X}, \rmG)}}$.
Here $\tilde \rmI_{\rmG} \ra \rmI_{\rmG}$ (${\widetilde {\bKH({\rm X}, \rmG)}} \ra \bKH({\rm X}, \rmG)$) is a cofibrant replacement in the category of $\bK(\Speck, \rmG)$-module spectra. This proves 
the first statement in (ii).  The
functoriality of the factorization in (i) and (ii) follows readily by observing that 
both $\pi^*$ (when $\pi$ is flat) and $\pi_*$ (when $\pi$ is proper and also perfect)
are module maps over $\bK(\Speck, \rmG)$. Therefore, these maps will induce maps between the diagrams in ~\eqref{null.homotopic} corresponding to
$\rmX$ and $\rmY$. 
\vskip .2cm
To prove the last statement, one first observes that $\bK({\rm X}, \rmG_1 \times \cdots \times \rmG_q)$ is a module
spectrum over $\bK(\Speck, \rmG_1 \times \cdots \times \rmG_q)$. Now the homotopy commutative square
 (where the vertical maps are the obvious ones)
\vskip .2cm
\xymatrix{{\bK(\Speck, \Pi_{i=1}^q\rmG_i ) \wedge \bKH({\rm X}, \Pi_{i=1}^q \rmG_i)} \ar@<1ex>[r] \ar@<1ex>[d] & {\bKH({\rm X}, \Pi_{i=1}^q \rmG_i )} \ar@<1ex>[d]\\
{\bKH(\rmE\rmG_1^{gm, m} \times \cdots \rmE\rmG_n^{gm, m}, \Pi_{i=1}^q\rmG_i )  \wedge \bKH({\rm X}, \Pi_{i=1}^q\rmG_i )} \ar@<1ex>[r] & \bKH(\rmE\rmG_1^{gm,m} \times \cdots \rmE\rmG_n^{gm, m} \times {\rm X}, \Pi_{i=1}^q\rmG_i)}
\vskip .2cm \noindent
and an argument exactly as in the case of a single group, proves the factorization in the last statement. The functoriality of this factorization in $\rmX$ may be proven as in the case
of a single group. 
\end{proof}

\vskip .2cm 
\begin{definition}
 \label{funct.alpha} For each linear algebraic group $\rmG$ acting on a scheme $\rmX$ satisfying the  hypotheses as in ~\ref{stand.hyp.1} and each
choice of geometric classifying spaces as in ~\ref{EG.general}, we will
define a function $\alpha:\N \ra \N$, by $\alpha(m) =m'$ where $m' \ge m+1$ is some choice of $m'$ as in the last Proposition.
\end{definition}

\begin{lemma}
 \label{comp.towers}
 Let $\rmG$ denote either ${\rm GL}_n$ or a finite product of general linear groups. Then adopting the terminology above, one
 obtains a strictly commutative diagram  of towers
 \[\xymatrix{ {} \ar@<1ex>[d] & {} \ar@<1ex>[d]\\
 {\bK(\Speck, \rmG)\compl_{\rho_{\rmG}, \alpha(m+1)} = \bK( \Speck, \rmG)/\rmI_G^{L \wedge \alpha (m+1)}} \ar@<1ex>[r] \ar@<1ex>[d] & {\bK(\rmE\rmG^{gm, m+1}, \rmG)} \ar@<1ex>[d]\\
               {\bK(\Speck, \rmG)\compl_{\rho_{\rmG}, \alpha(m)} =\bK( \Speck, \rmG)/\rmI_G^{L \wedge \alpha (m)}} \ar@<1ex>[r] \ar@<1ex>[d] & {\bK(\rmE\rmG^{gm, m}, \rmG)} \ar@<1ex>[d]\\
                 {}  & {} }
 \]
\end{lemma}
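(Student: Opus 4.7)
The plan is to deduce the strictly commutative tower from the factorizations of Proposition ~\ref{factoring.through.part.compl}, after arranging $\alpha$ to be monotone and fixing coherent cofibrant models.

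First, I would arrange that $\alpha \colon \N \to \N$ of Definition ~\ref{funct.alpha} is non-decreasing: since any integer larger than $\alpha(m)$ also serves as a valid choice at level $m$ in Proposition ~\ref{factoring.through.part.compl}(i), replacing $\alpha$ by its monotone envelope $\max_{j \le m}\alpha(j)$ preserves validity. The inclusion of ideal powers $\rmI_\rmG^{L \wedge \alpha(m+1)} \hookrightarrow \rmI_\rmG^{L \wedge \alpha(m)}$ then yields the left vertical quotient map $\bK(\Speck,\rmG)/\rmI_\rmG^{L\wedge \alpha(m+1)} \to \bK(\Speck,\rmG)/\rmI_\rmG^{L\wedge \alpha(m)}$, while the closed inclusion $\rmE\rmG^{gm,m}\hookrightarrow \rmE\rmG^{gm,m+1}$ of ~\eqref{adm.gadget.1} gives the right vertical restriction map on equivariant K-theory via pull-back of perfect complexes. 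Since $\Speck$ and each $\rmE\rmG^{gm,m}$ are regular, $\bK$ agrees with $\bKH$ in each relevant spot, so Proposition ~\ref{factoring.through.part.compl}(i) (applied with $\rmX = \Speck$ and target $\rmE\rmG^{gm,m}$ at levels $m$ and $m+1$) produces the two horizontal maps.

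Second, I would verify that each square commutes in the homotopy category. Both composites in the square represent the common map $\bK(\Speck,\rmG) \to \bK(\rmE\rmG^{gm,m},\rmG)$ factored through the homotopy cofiber of $\rmI_\rmG^{L\wedge \alpha(m+1)} \wedge \bK(\Speck,\rmG) \to \bK(\Speck,\rmG)$. Concretely, the strictly commutative outer square
\[
\xymatrix{
\bK(\Speck, \rmG) \ar[r] \ar@{=}[d] & \bK(\rmE\rmG^{gm, m+1}, \rmG) \ar[d] \\
\bK(\Speck, \rmG) \ar[r] & \bK(\rmE\rmG^{gm, m}, \rmG),
}
\]
which commutes because pull-back along $\rmE\rmG^{gm,m}\hookrightarrow \rmE\rmG^{gm,m+1}$ is compatible with the pull-back from $\Speck$, is precisely what one applies the cofiber construction to. Together with the null-homotopy of $\rmI_\rmG^{L\wedge \alpha(m+1)} \wedge \bK(\rmE\rmG^{gm,m}, \rmG) \to \bK(\rmE\rmG^{gm,m},\rmG)$ (which is the content of the proof of Proposition ~\ref{factoring.through.part.compl}), the universal property of the cofiber gives the desired commutativity up to homotopy.

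The main obstacle is promoting this homotopy commutativity to the strict commutativity required in the statement. To achieve this, I would fix once and for all functorial cofibrant models in the model category of commutative ring spectra as set up in \cite[Theorem 4.1]{SS} and \cite{Ship04}, so that the ideal powers $\rmI_\rmG^{L\wedge n}$ and their quotients $\bK(\Speck,\rmG)/\rmI_\rmG^{L\wedge n}$ assemble into a strict tower under the ideal inclusions; similarly, I take a functorial fibrant replacement on the right-hand column so that the restriction maps form a strict tower. The factorizations of Proposition ~\ref{factoring.through.part.compl}(i) may then be constructed inductively in $m$, each one chosen so that it strictly lifts the previous one along the vertical maps. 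The obstruction to such a lift sits in the homotopy of the same cofiber already shown to be trivial, so the induction goes through. This rectification is routine within the model-categorical framework fixed in Section ~\ref{basic.terminology}, but is the only genuinely technical point of the argument.
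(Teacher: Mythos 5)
Your overall skeleton matches the paper's: establish each square of the ladder up to homotopy, then rectify into a strictly commutative map of towers after arranging the right-hand column to be a tower of fibrations. The paper does exactly this via its appendix Lemmas ~\ref{tower.1} and ~\ref{tower.fibs}, and your remarks on making $\alpha$ non-decreasing and on fixing functorial (co)fibrant models are consistent with what is implicitly used there. The genuine weak point is your justification of the levelwise homotopy commutativity. A map out of a homotopy cofiber is \emph{not} determined up to homotopy by its restriction to the source: two factorizations of $\bK(\Speck, \rmG) \ra \bK(\rmE\rmG^{gm, m}, \rmG)$ through $\bK(\Speck, \rmG)/\rmI_{\rmG}^{L \wedge \alpha(m+1)}$ can differ by a map factoring through the suspension of $\rmI_{\rmG}^{L \wedge \alpha(m+1)}$, so ``the universal property of the cofiber'' by itself does not give commutativity of the square. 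What the paper does instead (Lemma ~\ref{tower.1}) is to start from the strictly commutative three-column diagram $\rmI_{\rmG}^{L\wedge\alpha(m+1)} \ra \bK(\Speck, \rmG) \ra \bK(\rmE\rmG^{gm, m+1}, \rmG)$ lying over its level-$m$ analogue (with the identity in the middle column), and then check, using the explicit null-homotopies and cones, that the induced square of cofibers homotopy commutes: on the cone factor both composites into $\bK(\rmE\rmG^{gm,m},\rmG)$ are null-homotopic, hence homotopic to each other. You need this, or an equivalent argument with compatibly chosen null-homotopies, to get the homotopy-commutative square ~\eqref{comp.towers.1}.

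Relatedly, your strictification step is phrased as an obstruction-vanishing claim (``the obstruction \ldots sits in the homotopy of the same cofiber already shown to be trivial''), which is neither the relevant obstruction nor needed. Once levelwise homotopy commutativity is in hand and the right column is a tower of fibrations, the rectification is unconditional: one inductively replaces the horizontal map at level $m+1$ by a homotopic map, lifting the chosen homotopy against the fibration $q_{m+1}$ (this is exactly the paper's Lemma ~\ref{tower.fibs}, whose input is only the homotopy commutativity of each square). So your monotonicity and model-category bookkeeping are fine; what is missing is the cone/null-homotopy argument for the individual squares, and the inductive lifting should be cited as homotopy rectification against fibrations rather than as an obstruction computation.
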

\begin{proof}
 One first applies  Lemma ~\ref{tower.1} to the strictly commutative diagram
  \[\xymatrix{{{\rmI_{\rmG}^{L\alpha(m+1)}} } \ar@<1ex>[r] \ar@<1ex>[d] & {\bK(\Speck, \rmG)} \ar@<1ex>[r] \ar@<1ex>[d]^{id} & {\bK(\rmE\rmG^{gm, m+1}, \rmG)} \ar@<1ex>[d]\\
             {\rmI_{\rmG}^{L\alpha(m)}} \ar@<1ex>[r] & {\bK(\Speck, \rmG)} \ar@<1ex>[r] &{\bK(\rmE\rmG^{gm, m}, \rmG)}} \]
which results in the homotopy commutative diagram:
\be \begin{equation}
\label{comp.towers.1}
\xymatrix{ {\bK(\Speck, \rmG)\compl_{\rho_{\rmG}, \alpha(m+1)} = \bK( \Speck, \rmG)/\rmI_G^{L \wedge \alpha (m+1)}} \ar@<1ex>[r] \ar@<1ex>[d] & {\bK(\rmE\rmG^{gm, m+1}, \rmG)} \ar@<1ex>[d]\\
               {\bK(\Speck, \rmG)\compl_{\rho_{\rmG}, \alpha(m)} =\bK( \Speck, \rmG)/\rmI_G^{L \wedge \alpha (m)}} \ar@<1ex>[r] & {\bK(\rmE\rmG^{gm, m}, \rmG)}.} 
\end{equation} \ee
Now one invokes Lemma ~\ref{tower.fibs} with the fibration $q_m$ denoting the right vertical map to complete the proof.
\end{proof}

\begin{proposition}
 \label{comp.towers.2}
 Assume $\rmG$ is as in Lemma ~\ref{comp.towers}, and that $f: \rmX \ra \rmY$ is a $\rmG$-equivariant map ($f: \rmY \ra \rmX$ is a $\rmG$-equivariant 
 map that is also proper and perfect).
 Then one obtains a  diagram (of towers)

\be \begin{equation}
 \label{tower.diagm}
 \xymatrix{ {\bKH(\rmY, \rmG)\compl_{\rho_{\rmG}, \alpha(m+1)}} \ar@<1ex>[ddd]^{} \ar@<1ex>[dr]^{h_{m+1}} \ar@<1ex>[rrr]^{}  &&&  {\bKH(\rmY, \rmG)\compl_{\rho_{\rmG}, \alpha(m)}} \ar@<1ex>[dl]_{h_m} \ar@<1ex>[ddd]^{}\\
  & {\bKH(\rmX, \rmG)\compl_{\rho_{\rmG}, \alpha(m+1)}} \ar@<1ex>[r]^(.4){} \ar@<1ex>[d]^{} & {\bKH(\rmX, \rmG)\compl_{\rho_{\rmG}, \alpha(m)}}  \ar@<1ex>[d]^{} \\
  &{\bKH(\rmE\rmG^{gm, m+1}\times \rmX, \rmG)} \ar@<1ex>[r]^(.4){} &{\bKH(\rmE\rmG^{gm, m}\times \rmX, \rmG)}\\
  {\bKH(\rmE\rmG^{gm, m+1}\times \rmY, \rmG)} \ar@<1ex>[ur]^{g_{m+1}}  \ar@<1ex>[rrr]^{}   &&&    {\bKH(\rmE\rmG^{gm, m}\times \rmY, \rmG)} \ar@<1ex>[ul]_{g_m}}
\end{equation} \ee
that strictly commutes for $m \ge 0$, for pull-back by $f$ (coherently homotopy commutes for $m \ge 0$, for derived push-forward by $f$, when $f$ is a $\rmG$-equivariant 
 map that is also proper and perfect, \res). Here the maps $\{h_m|m\}$ and $\{g_m|m\}$ are the maps induced by pull-back by $f$ (push-forward by $Rf_*$, \res).
Therefore one obtains
the homotopy commutative diagram:
\be \begin{equation}
\label{holim.diagm}
\xymatrix{{\holimm \{\bKH(\rmY, \rmG)\compl_{\rho_{\rmG}, \alpha(m)}|m\}} \ar@<1ex>[d] \ar@<1ex>[r] & {\holimm \bKH(\rmE\rmG^{gm, m}\times \rmY, \rmG)} \ar@<1ex>[d]\\
           {\holimm \{\bKH(\rmX, \rmG)\compl_{\rho_{\rmG}, \alpha(m)}|m\} } \ar@<1ex>[r] & {\holimm \bKH(\rmE\rmG^{gm, m}\times \rmX, \rmG)}}
\end{equation} \ee
The corresponding statements also hold for the equivariant homotopy K-theory replaced by equivariant G-theory provided
the map $f$ is also flat when considering pull-backs by $f$ (proper  when considering
push-forwards, \res). 
\end{proposition}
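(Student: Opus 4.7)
The plan is to assemble the diagram~\eqref{tower.diagm} by splicing together two pieces: the factorization of the map to the Borel construction through the partial derived completion provided by Proposition~\ref{factoring.through.part.compl}, and the compatibility of these factorizations as $m$ varies provided by Lemma~\ref{comp.towers}.

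First I would use Proposition~\ref{factoring.through.part.compl}(ii) applied to both $\rmX$ and $\rmY$ to obtain, for each $m\ge 0$, factorizations
\[
\bKH({\rm X},\rmG)\to \bKH({\rm X},\rmG)\compl_{\rho_{\rmG},\alpha(m)}\to \bKH(\rmE\rmG^{gm,m}\times{\rm X},\rmG)
\]
and similarly for $\rmY$, together with the compatibility (strict for $f^*$, coherent for $Rf_*$) asserted there. This gives the oblique ``factorization'' maps $h_m$ and $g_m$ and verifies that the triangles on the front and back faces of the cube~\eqref{tower.diagm} commute. Second, I would apply Lemma~\ref{comp.towers} after smashing over $\bK(\Speck,\rmG)$ with the cofibrant replacements ${\widetilde{\bKH({\rm X},\rmG)}}$ and ${\widetilde{\bKH({\rm Y},\rmG)}}$ regarded as $\bK(\Speck,\rmG)$-module spectra; since the maps in Lemma~\ref{comp.towers} are all maps of $\bK(\Speck,\rmG)$-module spectra, smashing produces the horizontal tower maps in~\eqref{tower.diagm} and makes the top and bottom squares of the cube strictly commutative.

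With those two ingredients in place, the only remaining faces to check are the left and right squares, which assert the compatibility of the vertical maps induced by $f$ with the tower structure. For pullback along a $\rmG$-equivariant $f:{\rm X}\to {\rm Y}$, $f^*$ is a strict map of $\bK(\Speck,\rmG)$-module spectra (obtained by applying a functorial flat replacement of perfect complexes), so it commutes on the nose with the derived smash products occurring in the partial completion and with the maps to $\bKH(\rmE\rmG^{gm,m}\times-,\rmG)$; this yields strict commutativity of~\eqref{tower.diagm}. For the covariant case one replaces perfect complexes by a functorial flabby (e.g.\ Godement) resolution so that $Rf_*$ is induced by a genuine functor of complexes; the resulting spectrum-level map is a map of $\bK(\Speck,\rmG)$-module spectra up to natural weak-equivalence, which suffices to make~\eqref{tower.diagm} coherently homotopy commute. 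The final diagram~\eqref{holim.diagm} is then obtained by applying $\holimm$ to~\eqref{tower.diagm}, using our standing convention of implicitly taking a functorial fibrant replacement of each tower before forming homotopy inverse limits so that the resulting square homotopy commutes.

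The main obstacle will be the covariant case: producing a genuinely commutative tower (rather than just a levelwise homotopy commutative square for each $m$) requires that $Rf_*$ be represented by a functor at the level of complexes that is compatible with the $\bK(\Speck,\rmG)$-module structure on $\bKH$ and with the factorization through $\bK(\Speck,\rmG)\compl_{\rho_{\rmG},\alpha(m)}$. This is handled by fixing once and for all a functorial injective/flabby replacement so that all pushforwards, module actions, and tower maps live in a single model-categorical framework (as allowed by the conventions in~\ref{basic.terminology}); the projection formula together with the fact that the derived pushforward preserves perfectness under our hypothesis on $f$ (Definition~\ref{pseudo.coh}) then yields the required coherent commutativity, and the final statement about equivariant $G$-theory follows by the same argument using a functorial flat resolution for pullback by a flat map and a functorial flabby resolution for pushforward by a proper map.
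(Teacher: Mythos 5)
Your contravariant half is fine and is essentially the paper's argument: smash the strictly commutative tower of Lemma~\ref{comp.towers} over $\bK(\Speck,\rmG)$ with $\bKH(\rmY,\rmG)$ and $\bKH(\rmX,\rmG)$, and use that $f^*$ is a strict map of module spectra. The genuine gap is in the covariant case. You assert that fixing a functorial Godement resolution makes $Rf_*$ a module map ``up to natural weak-equivalence'' and that this ``suffices'' for coherent homotopy commutativity of the cube of towers, but this skips the actual difficulty: the horizontal structure maps of the towers $\{\bKH(\rmE\rmG^{gm,m}\times \rmX,\rmG)\}_m$ and $\{\bKH(\rmE\rmG^{gm,m}\times \rmY,\rmG)\}_m$ are pull-backs along the closed immersions $i_m\times id$, and these do not commute with $R(id\times f)_*$ on the nose. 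One only has a base-change morphism $L(i_m\times id)^*R(id\times f)_*\ra R(id\times f)_*L(i_m\times id)^*$, which is a weak-equivalence precisely because the relevant squares are Tor-independent; moreover, to obtain a \emph{coherently} homotopy commutative map of towers (rather than a levelwise homotopy commutative ladder) one needs the higher-order base-change identities coming from chains of such Tor-independent squares, together with the projection-formula comparison $(P\boxtimes Rf_*Q)\simeq R(id\times f)_*(P\boxtimes Q)$ made natural in $P$, $Q$ and in the tower level so that the pairing squares commute coherently. None of this is supplied by simply placing everything ``in a single model-categorical framework''; it is the substance of the paper's proof.

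Relatedly, your derivation of the final square~\eqref{holim.diagm} is not justified as written: applying $\holimm$ after a fibrant replacement to a diagram of towers whose faces only commute up to (unspecified) homotopies does not produce a homotopy commutative square of homotopy inverse limits. One needs actual coherence data, and then a rectification statement; the paper invokes the proposition of Cordier--Porter to replace the homotopy commutative comparison map of towers by a coherent one agreeing levelwise, with a coherent homotopy, before passing to $\holimm$. So the missing ingredients are concretely: the Tor-independence/base-change argument (with its higher coherences) comparing $Rf_*$ with the tower restriction maps, the natural projection-formula compatibility with the $\bK(\Speck,\rmG)$-module pairings, and a rectification step justifying the passage to the homotopy inverse limit.
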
          

\begin{proof} We will explicitly consider only the case of equivariant homotopy K-theory, since the proof for equivariant G-theory is
entirely similar.
We take the derived smash product of the strictly commutative diagram provided by Lemma ~\ref{comp.towers} over
$\bK(\Speck, \rmG)$ with $\bKH(\rmY, \rmG)$ to obtain the strictly commutative diagrams:
\fontsize{8}{12}
\be \begin{equation}
\label{tower.diagm.1}
\xymatrix{{\bK( \Speck, \rmG)/\rmI_G^{L \wedge \alpha (m+1)} {\overset L {\underset {\bK(\Speck, \rmG)} \wedge}} {\bKH(\rmY, \rmG)}} \ar@<1ex>[r] \ar@<1ex>[d] & {\bK(\rmE\rmG^{gm, m+1}, \rmG) {\overset L {\underset {\bK(\Speck, \rmG)} \wedge}} {\bKH(\rmY, \rmG) }} \ar@<1ex>[r] \ar@<1ex>[d]& {\bKH(\rmE\rmG^{gm, m+1} \times \rmY, \rmG)  } \ar@<1ex>[d] \\
            {\bK( \Speck, \rmG)/\rmI_G^{L \wedge \alpha (m)} {\overset L {\underset {\bK(\Speck, \rmG)} \wedge}} {\bKH(\rmY, \rmG)}} \ar@<1ex>[r] &   {\bK(\rmE\rmG^{gm, m}, \rmG) {\overset L {\underset {\bK(\Speck, \rmG)} \wedge}} {\bKH(\rmY, \rmG)   }} \ar@<1ex>[r] &{\bKH(\rmE\rmG^{gm, m} \times \rmY, \rmG)   }  ,}
\end{equation} \ee
\be \begin{equation}
\label{tower.diagm.2}
\xymatrix{{\bK( \Speck, \rmG)/\rmI_G^{L \wedge \alpha (m+1)} {\overset L {\underset {\bK(\Speck, \rmG)} \wedge}} {\bKH(\rmX, \rmG)}} \ar@<1ex>[r] \ar@<1ex>[d] & {\bK(\rmE\rmG^{gm, m+1}, \rmG) {\overset L {\underset {\bK(\Speck, \rmG)} \wedge}} {\bKH(\rmX, \rmG) }} \ar@<1ex>[r] \ar@<1ex>[d]& {\bKH(\rmE\rmG^{gm, m+1} \times \rmX, \rmG)  } \ar@<1ex>[d] \\
            {\bK( \Speck, \rmG)/\rmI_G^{L \wedge \alpha (m)} {\overset L {\underset {\bK(\Speck, \rmG)} \wedge}} {\bKH(\rmX, \rmG)}} \ar@<1ex>[r] &   {\bK(\rmE\rmG^{gm, m}, \rmG) {\overset L {\underset {\bK(\Speck, \rmG)} \wedge}} {\bKH(\rmX, \rmG)   }} \ar@<1ex>[r] &{\bKH(\rmE\rmG^{gm, m} \times \rmX, \rmG)   }  .}
\end{equation} \ee
\normalsize
The strict commutativity of the last squares in ~\eqref{tower.diagm.1} and ~\eqref{tower.diagm.2} follow from the
functoriality of pairings in equivariant K-theory and equivariant homotopy K-theory.
One may now observe that the outer (inner) square in ~\eqref{tower.diagm} corresponds to the diagram ~\eqref{tower.diagm.1} (~\eqref{tower.diagm.2}, \res), with
 the top horizontal map (the second horizontal map) in ~\eqref{tower.diagm} corresponding to the left vertical map in ~\eqref{tower.diagm.1} (in ~\eqref{tower.diagm.2}, \res). (Similarly the left vertical map  (the second vertical map) in ~\eqref{tower.diagm}
  corresponds to the composite map forming the top row in the diagram ~\eqref{tower.diagm.1} (~\eqref{tower.diagm.2}, \res). 
\vskip .1cm
Since K-theory and therefore, homotopy K-theory is contravariantly functorial, and so are the pairings between K-theory and homotopy K-theory appearing in the 
last squares of the diagrams ~\eqref{tower.diagm.1} and ~\eqref{tower.diagm.2}, we first see that if $f: \rmX \ra \rmY$ is a $\rmG$-equivariant map, 
pull-back by $f$ induces a map of the diagram ~\eqref{tower.diagm.1} to the diagram ~\eqref{tower.diagm.2} so that all the resulting squares strictly commute. This proves the statement on the strict commutativity
of the diagram ~\eqref{tower.diagm} for pull-back by $f$, which results in the commutativity of the corresponding diagram ~\eqref{holim.diagm}. 
\vskip .1cm
Next we consider the derived push-forward by the map $f$. In view of Lemma ~\ref{comp.towers}, clearly the derived push-forward by $f$ induces a map from the left-square in ~\eqref{tower.diagm.1}
 to the left square in ~\eqref{tower.diagm.2}, showing that the resulting diagram strictly commutes. Therefore, it suffices to show that the derived 
 push-forward by $f$ induces a map from the right square in ~\eqref{tower.diagm.1} to the right square in ~\eqref{tower.diagm.2}
 resulting in a  coherently homotopy commutative diagram. 
 This should be clear, since the pairings in the right squares in ~\eqref{tower.diagm.1} and ~\eqref{tower.diagm.2}
 are {\it natural}, and in view of the covariant functoriality of the equivariant homotopy K-theory spectrum with respect to proper equivariant maps that are also perfect as pointed out in
 ~\ref{KH.props}(1): however, we will provide a  detailed discussion to show this is indeed the case. 
 \vskip .1cm
 Therefore, we will now consider the following diagram, where the outer square (inner square) corresponds to the right square in ~\eqref{tower.diagm.1} (~\eqref{tower.diagm.2}, \res), and where $\wedge$ denotes
 ${\overset {\rm L} {\underset {\bK(\Speck, \rmG)} \wedge}}$:
 \fontsize{8}{12}
 \be \begin{equation}
   \label{tower.pairings}
  \xymatrix{ {\bK(\rmE\rmG^{gm,m+1}, \rmG)\wedge \bKH(\rmY, \rmG)} \ar@<1ex>[ddd]^{} \ar@<1ex>[dr]^{id \wedge f_*} \ar@<1ex>[rrr]^{}  &&&  {\bKH(\rmE\rmG^{gm, m+1} \times \rmY, \rmG)} \ar@<1ex>[dl]_{(id\times f)_*} \ar@<1ex>[ddd]^{}\\
  & {\bK(\rmE\rmG^{gm, m+1}, \rmG) \wedge \bKH(\rmX, \rmG)} \ar@<1ex>[r]^(.4){} \ar@<1ex>[d]^{} & {\bKH(\rmE\rmG^{gm, m+1} \times \rmX, \rmG)}  \ar@<1ex>[d]^{} \\
  &{\bK(\rmE\rmG^{gm, m}, \rmG) \wedge \bKH(\rmX, \rmG)} \ar@<1ex>[r]^(.4){} &{\bKH(\rmE\rmG^{gm, m}\times \rmX, \rmG)}\\
  {\bK(\rmE\rmG^{gm, m}, \rmG) \wedge \bKH( \rmY, \rmG)} \ar@<1ex>[ur]^{id \wedge f_*}  \ar@<1ex>[rrr]^{}   &&&    {\bKH(\rmE\rmG^{gm, m}\times \rmY, \rmG)} \ar@<1ex>[ul]_{(id \times f)_*}}
 \end{equation} \ee
\normalsize
 The functoriality of homotopy K-theory with respect to pull-backs and pairings, shows immediately that the outer square, the inner square and the 
 left-most square strictly commute. We proceed to consider the remaining squares.
  \vskip .1cm 
 Next we consider the commutativity of the right square in ~\eqref{tower.pairings}. Now it is important to observe that the squares
 \[\xymatrix{{\rmE\rmG^{gm,m} \times \rmY} \ar@<1ex>[d]^{id \times f} \ar@<1ex>[r]^{i_m \times id_Y} & {\rmE\rmG^{gm,m+1} \times \rmY} \ar@<1ex>[d]^{id \times f}\\
             {\rmE\rmG^{gm,m} \times \rmX} \ar@<1ex>[r]^{i_m \times id_{\rmX}} & {\rmE\rmG^{gm,m+1} \times \rmX}}
 \]
{\it are Tor independent} (where two schemes ${\rm A}$ and ${\rm B}$ over ${\rm C}$ are Tor-independent if $Tor_i^{\O_{\rm C}}(\O_{\rm A}, \O_{\rm B}) =0$ for all $i>0$), so that the base-change morphism $L(i_m \times id_X)^* R(id \times f)_* (F) \ra R(id\times f)_* L(i_m \times id_Y)^*(F)$ is a
quasi-isomorphism for all pseudo-coherent $\rmG$-equivariant complexes $F$ on ${\rm E}{ \rmG}^{gm,m+1}\times \rmY$: see \cite[36.21: Cohomology and Base Change]{Stacks}.
By defining the right derived functors using the canonical Godement resolutions and the left derived functors using functorial flat resolutions (in fact, using
the Koszul complexes associated to the regular closed immersions $\rmE\rmG^{gm,m} \ra \rmE\rmG^{gm,m+1} \ra \cdots \ra \rmE\rmG^{gm,m+k}$), we see that 
 the above base-change morphisms are natural, and that there are higher order base-change identities corresponding to chains of Tor-independent squares:
\[\xymatrix{ {\rmE\rmG^{gm, m} \times \rmY} \ar@<1ex>[d]^{id \times f} \ar@<1ex>[r]^{i_m \times id_{\rmY}} & {\rmE\rmG^{gm, m+1} \times \rmY} \ar@<1ex>[d]^{id \times f} \ar@<1ex>[r]^(.7){i_{m+1} \times id_{\rmY}}& {\cdots} \ar@<1ex>[r] &   {\rmE\rmG^{gm, m+k} \times \rmY} \ar@<1ex>[d]^{id \times f}\\
              {\rmE\rmG^{gm, m} \times \rmX} \ar@<1ex>[r]^{i_m \times id_{\rmY}} &  {\rmE\rmG^{gm, m+1} \times \rmY} \ar@<1ex>[r]^(.7){i_{m+1} \times id_{\rmX}} &{\cdots} \ar@<1ex>[r] & {\rmE\rmG^{gm, m+k} \times \rmX}}
\]
which then make the right square in ~\eqref{tower.pairings}
coherently homotopy commutative. These observations show that the maps denoted $(id \times f)_*$ in ~\eqref{tower.pairings}
provides a coherently homotopy commutative map of towers:
\be \begin{equation}
    \label{right.square}
 \xymatrix{ {} \ar@<1ex>[d] & {} \ar@<1ex>[d]\\
 {\bKH(\rmE\rmG^{gm, m+1} \times \rmY, \rmG)} \ar@<1ex>[r]^{(id \times f)_*} \ar@<1ex>[d] & {\bKH(\rmE\rmG^{gm, m+1}\times \rmX, \rmG)} \ar@<1ex>[d]\\
     {\bKH(\rmE\rmG^{gm, m} \times \rmY, \rmG)} \ar@<1ex>[r]^{(id \times f)_*} \ar@<1ex>[d]& {\bKH(\rmE\rmG^{gm, m} \times \rmX, \rmG)} \ar@<1ex>[d]\\
     {} & {} }
\end{equation} \ee
(In fact one may first show this with the spectrum $\bKH$ replaced by ${\mathbb K}$: then the functoriality of the constructions involved in 
the passage from ${\mathbb K}$ to $\bKH$ shows the same conclusion holds for the spectrum $\bKH$.)
The construction of the homotopy limit of a tower, involving all the higher order homotopies between the various compositions of
the structure maps in the tower, shows that one obtains an induced map forming the right vertical map in ~\eqref{holim.diagm}.
 \vskip .1cm
 Next we consider the top and bottom squares in ~\eqref{tower.pairings}. Observe that the passage from the equivariant K-theory spaces to
 the corresponding equivariant homotopy K-theory spectra is functorial. Therefore, it suffices to show that the corresponding squares, when the equivariant homotopy K-theory 
 spectra appearing there have been replaced by the corresponding equivariant K-theory spaces are coherently homotopy commutative.
 Here, one may consider a suitable model for K-theory, so we can provide explicit interpretations for K-theory classes.
 For us it may be simplest to adopt the Gillet-Grayson $G$-construction, which interprets the $\rmG$-equivariant K-theory space associated
 to a $\rmG$-scheme $\rmX$ as given by the simplicial set whose $n$-simplices are a pair of sequences of cofibrations  
 \[(K_0 \rightarrowtail K_1 \rightarrowtail \cdots \rightarrowtail K_n, L_0 \rightarrowtail L_1 \rightarrowtail \cdots \rightarrowtail L_n)\]
 with each $K_i, L_i$ a $\rmG$-equivariant perfect complex on $\rmX$, provided with isomorphisms $K_i/K_0 \simeq L_i/L_0$
 compatible with $\rmG$-actions. 
 \vskip .1cm
 The coherent homotopy \footnote{Here {\it coherent homotopy} means by means of naturally chosen higher order homotopies.} commutativity of the top square
 in ~\eqref{tower.pairings} therefore amounts to showing there is a natural quasi-isomorphism $(P \boxtimes Rf_*(Q)) {\overset {\simeq} \ra}  R(id\times f)_*(P\boxtimes Q)$, where $P$ ($Q$) is 
 a $\rmG$-equivariant perfect complex on $\rmE\rmG^{gm, m+1}$ ($\rmY$, \res). To see this, we make use of the canonical
 Godement resolutions to define $Rf_*$. For a bounded below complex $F$, we let $G(F)$ denote the total complex defined as the total complex of the double complex
  obtained from $F$ by applying the canonical Godement resolution to $F$. Then we send, $(P, f_*(G(Q))$ to $P \boxtimes f_*(G(Q)) $, which has a natural map
   to $G(P) \boxtimes f_*(G(Q))= (id \times f)_*(G(P \boxtimes Q))$, and which is a quasi-isomorphism. Therefore, the top square in ~\eqref{tower.pairings} commutes up to homotopy, by a homotopy which is
   natural in the arguments $P$ and $Q$ as well as the level $m+1$ of the towers that are involved in the above pairings. 
   (The last statement simply means the pull-back of the pairings from the $m+1$-th level of the tower to the $m$-th level of the towers defines a corresponding pairing there:
   that is,
   if $i_m: \rmE\rmG^{gm,m} \ra \rmE\rmG^{gm,m+1}$ is the given closed immersion, then the corresponding pairing is given by
   $(i_m^*(P), Q) \mapsto (i_m^*(P), f_*G(Q)) \mapsto (i_m^*(Q) \boxtimes f_*G(P) \mapsto (id \times f)_*(G(i_m^*(P) \boxtimes P))$.
   It follows that the same argument shows the coherent homotopy commutativity of the bottom square in ~\eqref{tower.pairings}.
   (This approach makes it essential to work with flabby resolutions in the category of all $\O$-modules such as those given by
 the Godement resolutions: however, one may make use of the quasi-coherator (as in \cite[Appendix B]{ThTr} to get back functorially into the category of
 quasi-coherent $\O$-modules, if so desired.) 
 \vskip .1cm
 Next we proceed to show that all the maps in the square ~\eqref{holim.diagm} exist. The functoriality of the derived completion functor with respect to 
 $\rho_{\rmG}: \bK(\Speck, \rm G) \ra \bK(\Speck)$ shows the existence of the left vertical map in ~\eqref{holim.diagm}. The map in the 
 top row of ~\eqref{holim.diagm} is induced by the coherently homotopy commutative map of towers forming the diagram  ~\eqref{tower.diagm.1}, while the map in 
 the bottom row of ~\eqref{holim.diagm} is induced by the coherently homotopy commutative map of towers forming the diagram  ~\eqref{tower.diagm.2}.
We already proved the existence of the 
 right vertical map in one of the earlier paragraphs. 
 \vskip .1cm
 Therefore, it suffices to show that the square ~\eqref{holim.diagm} commutes up to homotopy. Since $f_*$ induces a strictly commutative diagram from
 the first square in ~\eqref{tower.diagm.1} to the first square in ~\eqref{tower.diagm.2} it suffices to 
  invoke \cite[the Proposition on p. 270]{CP} with the coherently homotopy commutative diagrams $F$ ($G$) in {\it op.cit}
 denoting the tower 
 \[\{{\bKH(\rmE\rmG^{gm,m+1}, \rmG)\wedge \bKH(\rmY, \rmG)} \ra {\bKH(\rmE\rmG^{gm,m}, \rmG)\wedge \bKH(\rmY, \rmG)}|m\}\]
 defined by the left-most vertical map in ~\eqref{tower.pairings} (the tower
 \[\{{\bKH(\rmE\rmG^{gm,m+1} \times \rmX, \rmG)} \ra {\bKH(\rmE\rmG^{gm,m}\times \rmX, \rmG)}|m\}\]
 defined by the third
 vertical map in ~\eqref{tower.pairings}, \res) and with $f$ (in {\it op. cit}) denoting the composition of the inclined maps in the left-most-square followed by the horizontal maps in the
  middle square of ~\eqref{tower.pairings}. The map $g$ will then denote the map starting at the same source, and going along the top and bottom horizontal maps in ~\eqref{tower.pairings} followed by the inclined maps in the
   right most square there. Observe that the map denoted $f$ is a strict map of towers and therefore a map of coherent diagrams, while the map denoted $g$ is only a 
   homotopy commutative map of such towers. The above Proposition of \cite{CP} now shows one may replace 
   $g$ by a coherently homotopy commutative map $g'$ of towers, which in each degree of the tower agrees with the given map $g$, together with a coherent homotopy from $f$ to $g'$. These observations therefore show that on taking the homotopy inverse limits, one obtains the homotopy commutative square in ~\eqref{holim.diagm}.

 \end{proof}

\vskip .2cm \noindent

The main result of this section will be the following Theorem.
\begin{theorem}
\label{main.thm.3}
Let $\rmG$ denote a finite product of general linear groups acting on a {\it normal quasi-projective} scheme $\rmX$ of finite type over 
a field $k$. Let $\rmT$ denote a split maximal torus in $\rmG$.
 Then the map 
 \[\{\bKH({\rm X}, \rmG)\compl_{\rho_{\rmG}, \alpha(m)}|m\} \ra  \{\bKH({\rm EG}^{\rm gm,m}{\underset {\rmG} \times}\rmX)|m\} \]
 of pro-spectra induces a weak-equivalence on
taking the homotopy inverse limit as $m \ra \infty$, provided the corresponding map 
\[\{\bKH({\rm X}, \rmT)\compl_{\rho_{\rmT}, \alpha(m)}|m\} \ra  \{\bKH({\rm E}{\rmT}^{\rm gm,m}{\underset {\rmT} \times}\rmX)|m\} \]
of pro-spectra induces a weak-equivalence on
taking the homotopy inverse limit as $m \ra \infty$.
\end{theorem}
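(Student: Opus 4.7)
The plan is to reduce from $\rmG$ to its split maximal torus $\rmT$ via the intermediate Borel subgroup $\rmB \supseteq \rmT$. Let $\pi: \rmG{\underset {\rmB} \times} \rmX \to \rmX$ denote the $\rmG$-equivariant flat proper perfect map from Section~\ref{KG.props}, parts (iv)--(v); the projection formula together with~\eqref{derived.direct} yields $\rmR\pi_* \pi^* \simeq \id$, so $\pi^*$ is a split monomorphism in the homotopy category of $\bK(\rmS,\rmG)$-module spectra with retraction $\pi_*$, and the same holds for the induced map of Borel constructions $\id \times \pi: \rmE\rmG^{gm,m}{\underset {\rmB} \times} \rmX \to \rmE\rmG^{gm,m}{\underset {\rmG} \times}\rmX$ (which is Zariski-locally trivial with fiber $\rmG/\rmB$). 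Combining Proposition~\ref{Key.prop.1} with Lemma~\ref{B.vs.T} identifies $\bKH(\rmG{\underset {\rmB} \times} \rmX, \rmG) \simeq \bKH(\rmX, \rmT)$ as $\bK(\rmS,\rmG)$-module spectra (the module structure factoring through the restriction $\bK(\rmS,\rmG) \to \bK(\rmS,\rmT)$), while Proposition~\ref{indep.class.sp} combined with item (vi) of Section~\ref{KG.props} yields $\holim_m \bKH(\rmE\rmG^{gm,m}{\underset {\rmB} \times} \rmX) \simeq \holim_m \bKH(\rmE\rmT^{gm,m}{\underset {\rmT} \times} \rmX)$.

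Next, I would apply Proposition~\ref{comp.towers.2} to both $\pi^*$ and the derived pushforward $\pi_*$ to obtain, after taking $\holim_m$, the homotopy commutative diagram
\[ \xymatrix@C=2em{ {\holim_m \bKH(\rmX, \rmG)\compl_{\rho_{\rmG}, \alpha(m)}} \ar[r]^(.45){\pi^*} \ar[d]_{\phi_{\rmG}} & {\holim_m \bKH(\rmG{\underset {\rmB} \times} \rmX, \rmG)\compl_{\rho_{\rmG}, \alpha(m)}} \ar[d]^{\phi'}\\
             {\holim_m \bKH(\rmE\rmG^{gm,m}{\underset {\rmG} \times}\rmX)} \ar[r]^(.45){\pi^*} & {\holim_m \bKH(\rmE\rmG^{gm,m}{\underset {\rmB} \times}\rmX)} } \]
and a corresponding diagram with the horizontal arrows replaced by $\pi_*$, whose horizontal composition with the first row is homotopic to the identity in each row. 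I then invoke Theorem~\ref{T.comp.vs.G.comp}: since $\pi_1(\rmG)$ is torsion-free ($\rmG$ being a finite product of general linear groups) and $\bKH(\rmX, \rmT)$ is $(-n)$-connected for $n$ sufficiently large by Proposition~\ref{KH.vanishing}, the top-right entry becomes weakly equivalent to $\holim_m \bKH(\rmX, \rmT)\compl_{\rho_{\rmT}, \alpha(m)}$. Under these identifications the map $\phi'$ is precisely $\phi_{\rmT}$, which is a weak-equivalence by the hypothesis of the theorem.

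Finally, the two squares (one for $\pi^*$, one for $\pi_*$, with horizontal compositions homotopic to the identity) exhibit $\phi_{\rmG}$ as a retract of $\phi_{\rmT}$ in the arrow category of spectra, so by the standard retract principle $\phi_{\rmG}$ is also a weak-equivalence. The main technical obstacle lies in checking that the retract structure survives coherently through all three comparison steps---Proposition~\ref{comp.towers.2}, Theorem~\ref{T.comp.vs.G.comp}, and Proposition~\ref{indep.class.sp}---so that we have a genuine retract diagram after passage to $\holim_m$. This coherence is guaranteed by the naturality of each of these constructions in the $\rmG$-scheme $\rmX$, together with the fact that the essential identity $\pi_* \pi^* \simeq \id$ already holds at the level of module spectra prior to completion, hence persists through the derived completion towers and all subsequent identifications.
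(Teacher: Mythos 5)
Your reduction scheme is the one the paper itself uses: the flag-bundle retraction $\pi^*,\pi_*$ for $\pi:\rmG{\underset {\rmB} \times}\rmX \ra \rmX$, Proposition ~\ref{comp.towers.2} to promote the splitting ~\eqref{splittings} to a diagram of towers, Lemma ~\ref{B.vs.T} to pass from $\rmB$ to $\rmT$, Theorem ~\ref{T.comp.vs.G.comp} together with Proposition ~\ref{KH.vanishing} to trade the $\rho_{\rmG}$-completion of $\bKH(\rmX,\rmT)$ for the $\rho_{\rmT}$-completion, Proposition ~\ref{indep.class.sp} to trade $\rmE\rmG^{gm,m}$ for $\rmE\rmT^{gm,m}$, and a retract argument to conclude.

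The gap is the sentence ``under these identifications the map $\phi'$ is precisely $\phi_{\rmT}$'', which you justify by naturality in $\rmX$ and by the persistence of $\pi_*\pi^*\simeq \id$. Neither addresses the actual issue, which is the homotopy commutativity of the square
\[
\xymatrix{
{\holimm\, \bKH({\rm X}, \rmT)\compl_{\rho_{\rmG}, \alpha(m)}} \ar[r] \ar[d]_{\simeq} & {\holimm\, \bKH(\rmE\rmG^{gm,m}{\underset {\rmT} \times}\rmX)} \ar[d]^{\simeq}\\
{\holimm\, \bKH({\rm X}, \rmT)\compl_{\rho_{\rmT}, \alpha(m)}} \ar[r] & {\holimm\, \bKH(\rmE\rmT^{gm,m}{\underset {\rmT} \times}\rmX)}}
\]
in which the two horizontal maps involve \emph{different} completions ($\rho_{\rmG}$ versus $\rho_{\rmT}$) and \emph{different} classifying spaces, and in which the horizontal maps themselves only exist through the factorizations of Proposition ~\ref{factoring.through.part.compl}; naturality in $\rmX$ says nothing about whether those factorizations and the two vertical comparison equivalences are compatible. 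This is precisely what the paper establishes in the diagram ~\eqref{K.diagm.1} and Proposition ~\ref{derived.compl.pro.1}: the function $\alpha$ is chosen so that the augmentation to $\bKH(\rmE\rmG^{gm,m}\times \rmX,\rmT)$ factors through $\bKH(\rmX,\rmT)/\rmI_{\rmG}^{L\wedge {\alpha(m)+1}}$ (using $\bKH(\rmX,\rmT)\simeq \bKH(\rmG{\underset {\rmT} \times}\rmX,\rmG)$) while the augmentation to $\bKH(\rmE\rmT^{gm,m}\times \rmX,\rmT)$ factors through $\bKH(\rmX,\rmT)/\rmI_{\rmT}^{L\wedge {\alpha(m)+1}}$; the comparison of the two completions is realized at the level of these quotient towers by \cite[Lemma 3.5]{CJ23}; and the two Borel constructions are bridged by the product $\rmE\rmG^{gm,m}\times \rmE\rmT^{gm,m}$ with its two flat projections (this is where Proposition ~\ref{indep.class.sp} enters), the crucial commutativity of the right-hand square holding because both composites are factorizations of one and the same pullback functor on $\rmT$-equivariant perfect complexes. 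With that diagram supplied, your retract argument finishes the proof exactly as in the paper; without it, the application of the hypothesis on $\rmT$ to the middle map $\phi'$ is unjustified.
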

\vskip .2cm
\begin{proof}
 The main idea of the proof is as follows.
Observe that the maps $\pi^*$ and $\pi_*$ associated to $\pi: \rmG{\underset {\rmB} \times}X \ra X$ induce maps that make the following
diagram homotopy commute, for each fixed $m$:
\be \begin{equation}
     \label{splittings}
\xymatrix{{\bKH({\rm X}, \rmG)\compl_{\rho_{\rmG}, \alpha(m)}} \ar@<1ex>[d]^{\pi^*} \ar@<1ex>[rr] && {\bKH(\rmE{\rmG}^{gm,m}{\underset {\rmG} \times}\rmX)} \ar@<1ex>[d]^{\pi^*} \\
{\bKH({\rm X}, \rmB)\compl_{\rho_{\rmG}, \alpha(m)} \simeq \bKH(\rmG{\underset {\rmB} \times}{\rm X}, \rmG)_{\rho_{\rmG}, \alpha(m)}} \ar@<1ex>[d]^{\pi_*} \ar@<1ex>[rr] && {\bKH(\rmE{\rmG}^{gm,m}{\underset {\rmB} \times}\rmX) \simeq 
\bKH(\rmE\rmG^{gm, m}{\underset {\rmG} \times}(\rmG{\underset {\rmB} \times}\rmX))} \ar@<1ex>[d]^{\pi_*}\\
{\bKH({\rm X}, \rmG)\compl_{\rho_{\rmG}, \alpha(m)}}  \ar@<1ex>[rr] && {\bKH(\rmE{\rmG}^{gm, m}{\underset {\rmG} \times}\rmX)}}.
\end{equation} \ee
\vskip .2cm \noindent
In fact, since $\pi$ is a proper smooth map, Proposition ~\ref{comp.towers.2} shows that we may view the above diagram, as $m$ varies, 
as a level diagram of pro-spectra or towers of spectra. Observe that ${\rmB}/{\rmT}$ is an affine space and hence ${\mathbb A}^1$-acyclic. Therefore, in view of Lemma ~\ref{B.vs.T}, we may replace the Borel subgroup ${\rmB}$ everywhere by its maximal torus 
${\rmT}$.
\vskip .2cm
We will next show how to replace the middle-map up to weak-equivalence by a corresponding map
when the derived completions with respect to $\rho_{\rmG}$ are replaced by derived completions with respect to 
$\rho_{\rmT}$. We will then show that the middle
map induces a weak-equivalence on taking the homotopy inverse limit as $m \ra \infty$. Since the top horizontal map (which is also the bottom horizontal map) 
is a retract of the middle horizontal map (see Proposition ~\ref{comp.towers.2},   ~\eqref{derived.direct}), it follows
that the top horizontal map also induces a weak-equivalence on taking the homotopy inverse limit as $m \ra \infty$. This will prove the theorem.
\vskip .2cm
Therefore, the rest of the proof will be to show that the middle horizontal map induces a weak-equivalence on 
taking the homotopy inverse limit as $m \ra \infty$. Next we invoke \cite[Lemma 3.5]{CJ23} with $A= \bK(\Speck, \rmG)$,
$B= \bK(\Speck, \rmT)$ and $C = \bK(\Speck)$:  we will assume that we have already replaced $C$ with $\tilde C$
and $B$ with $\tilde B$ following the terminology of \cite[Lemma 3.5]{CJ23}. Let ${\rm I}_{\rmG}$ denote the homotopy fiber of 
the composite restriction $\bK(\Speck, \rmG) \ra \bK(\Speck, \rmT) \ra \bK(\Speck)$ and let ${\rm I}_{\rmT}$ denote the
homotopy fiber of the restriction $\bK(\Speck, \rmT) \ra \bK(\Speck)$. For each integer $n \ge 1$, we will let
\be \begin{equation}
\label{IG.IT.powers}
 {\rm I}_{\rmG} ^{L\wedge ^n}=  {\overset {n} {\overbrace {{\rm I}_{\rmG} {\underset {\bK(\Speck, \rmG)} {\overset {\rm L} {\wedge}}} {\rm I}_{\rmG}  \cdots {\underset {\bK(\Speck, \rmG)} {\overset {\rm L} {\wedge}}} \ {\rm I}_{\rmG} }}} \mbox{ and }  {\rm I}_{\rmT} ^{ {\rm L}\wedge ^n}=  {\overset {n} {\overbrace { {\rm I}_{\rmT} {\underset {\bK(\Speck, \rmT)} {\overset {\rm L} {\wedge}}} {\rm I}_{\rmT}  \cdots {\underset {\bK(\Speck, \rmT)} {\overset {\rm L} {\wedge}}} \ {\rm I}_{\rmT} }}} 
    \end{equation}\ee
Now \cite[Lemma 3.5 and (3.0.2)]{CJ23} provide the following inverse system of commutative squares:
\be \begin{equation}
\label{K.diagm.0}
\xymatrix{ {\{\bKH({\rm X}, \rmT)\compl_{\rho_{\rmG}, \alpha(m)}|m\}} \ar@<1ex>[d] & {\{\bKH({\rm X}, \rmT)/{I_{\rmG}^{L\wedge^{\alpha(m)+1}}}|m\}} \ar@<-1ex>[l]^{\simeq} \ar@<1ex>[d] \\
{\{\bKH({\rm X}, \rmT)\compl_{\rho_{\rmT}, \alpha(m)}|m\}}  & {\{\bKH({\rm X}, \rmT)/{I_{\rmT}^{{\rm L}\wedge^{\alpha(m)+1}}}|m\}}  \ar@<-1ex>[l]^{\simeq}  }
\end{equation} \ee
\vskip .2cm
Making use of Proposition ~\ref{factoring.through.part.compl}(i) and making a suitable choice of the function $\alpha$ 
 following the terminology in Definition ~\ref{funct.alpha}, so that the map
$\bKH({\rm X}, \rmT) \ra \bKH(E\rmG^{gm, m}   \times {\rm X}, \rmT)$ factors through $\bKH({\rm X}, \rmT)/{I_{\rmG}^{{\rm L}\wedge^{\alpha(m)+1}}}   $ and also
the map $\bKH({\rm X}, \rmT) \ra \bKH(E\rmT^{gm, m}   \times {\rm X}, \rmT)$ factors through $\bKH({\rm X}, \rmT)/{I_{\rmT}^{{\rm L}\wedge^{\alpha(m)+1}}}$,
the above
diagrams now provide the inverse system of diagrams:
\be \begin{equation}
\label{K.diagm.1}
\xymatrix{ {\{\bKH({\rm X}, \rmT)\compl_{\rho_{\rmG}, \alpha(m)}|m\}} \ar@<1ex>[dd] & {\{\bKH({\rm X}, \rmT)/{I_{\rmG}^{{\rm L}\wedge^{\alpha(m)+1}}}|m\}} \ar@<1ex>[dd] \ar@<-1ex>[l]^{\simeq} \ar@<1ex>[r] & {\{\bKH(\rmE\rmG^{gm, m}   \times {\rm X}, \rmT)|m\}} \ar@<1ex>[d]\\
& & {\{\bKH(\rmE\rmG^{gm, m}  \times E\rmT^{gm, m} \times {\rm X}, \rmT)|n\}}  \\
{\{\bKH({\rm X}, \rmT)\compl_{\rho_{\rmT}, \alpha(m)}|m\}}  & {\{\bKH({\rm X}, \rmT)/{I_{\rmT}^{{\rm L}\wedge^{\alpha(m)+1}}}|m\}} \ar@<1ex>[r] \ar@<-1ex>[l]^{\simeq}&{\{ \bKH(\rmE\rmT^{gm, m}\times {\rm X}, \rmT)|m\}} \ar@<-1ex>[u] }
\end{equation} \ee
\vskip .2cm \noindent
Here $\bKH({\rm X}, \rmT)/{\rm I}_{\rmG}^{{\rm L}\wedge ^{\alpha(m)+1}}$ denotes the homotopy cofiber of the map 
${\rm I}_{\rmG}^{{\rm L}\wedge ^{\alpha(m)+1}} {\overset {\rm L}{\underset {\bK(\Speck, G)} \wedge}} \bKH({\rm X}, \rmT) \ra \bK({\rm X}, \rmT)$ and
\newline \noindent 
$\bKH({\rm X}, \rmT)/{\rm I}_{\rmT}^{{\rm L}\wedge ^{\alpha(m)+1}}$ denotes the homotopy cofiber of the map ${\rm I}_{\rmT}^{{\rm L}\wedge ^{\alpha(m)+1}} {\overset {\rm L}{\underset {\bK(\Speck, T)} \wedge}} \bKH({\rm X}, \rmT) \ra \bKH({\rm X}, \rmT)$.
The map  
\xymatrix{{\{\bKH({\rm X}, \rmT)/{{\rm I}_{\rmG}^{L\wedge^{\alpha(m)+1}}}|m\}}  \ar@<1ex>[r] & {\{\bKH(\rmE\rmG^{gm, n}   \times {\rm X}, \rmT)|m\}}}
exists because of the identifications: 
\vskip .1cm
$\bKH({\rm X}, \rmT) = \bKH(\rmG{\underset {\rmT} \times}{\rm X}, \rmG) \mbox{ and }\bKH(\rmE\rmG^{gm, n}   \times {\rm X}, \rmT) = \bKH(\rmE\rmG^{gm,n}{\underset {} \times}({\rmG}{\underset {\rmT} \times}X), \rmG).$
\vskip .2cm
Recall from Remark ~\ref{two.class.spaces} that there
are two distinct models of the direct system of geometric classifying spaces for split maximal tori $\rmT$ in $\GL_n$.
The product of the universal $\rmT$-bundles of these two models is yet another model for the universal bundle
over a direct system of geometric classifying space for $\rmT$, which
 shows up  in ${\{\bKH(\rmE\rmG^{gm, m}  \times \rmE\rmT^{gm, m} \times {\rm X}, \rmT)|m\}}$. 
 The projections $\rmE\rmG^{gm, m}   \times \rmX \leftarrow  \rmE\rmG^{gm, m}  \times \rmE\rmT^{gm, m} \times \rmX \ra  \rmE\rmT^{gm,m} \times \rmX$
are flat which provide the right-most two vertical maps.
\vskip .2cm
The left most square is simply the commutative square in ~\eqref{K.diagm.0}, which therefore commutes. To see the commutativity of
the right square, one may simply observe that the two maps are two different factorizations of the map
corresponding to the functor that pulls-back a $\rmT$-equivariant perfect complex on $\rmX \times \Delta[n]$ to 
a $\rmT$-equivariant perfect complex on $\rmE\rmG^{gm, n}  \times \rmE\rmT^{gm, n} \times \rmX \times \Delta[n]$. 
The left-vertical map clearly 
is a weak-equivalence on taking the homotopy inverse limit $n \ra \infty$ by Theorem ~\ref{T.comp.vs.G.comp} and by Proposition ~\ref{KH.vanishing}. The top and bottom horizontal maps in the right square 
 are provided by 
Propositions ~\ref{coveringlemma} and  ~\ref{factoring.through.part.compl}. The bottom horizontal map there
induces a weak-equivalence on taking the homotopy inverse limit as $n \ra \infty$  as shown in Theorem ~\ref{key.thm.1}.
The following Proposition now proves that the homotopy inverse limit of the maps forming the top row in the right square of diagram
~\ref{K.diagm.1} also induces a weak-equivalence. Therefore, modulo Theorem ~\ref{key.thm.1} and the following Proposition, 
this completes the proof of Theorem ~\ref{main.thm.3}. \qed
\end{proof}
\vskip .2cm
 
\begin{proposition}
 \label{derived.compl.pro.1} Assume the above situation. Then, assuming Theorem  ~\ref{key.thm.1},
the map in the top row of the right square in the diagram ~\ref{K.diagm.1} induces a weak-equivalence on taking the homotopy 
inverse limit as $n \ra \infty$. It follows that the middle row in the diagram ~\eqref{splittings} induces a weak-equivalence
on taking the homotopy inverse limit as $n \ra \infty$. 
\end{proposition}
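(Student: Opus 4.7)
The plan is to chase the right square of diagram ~\eqref{K.diagm.1}, viewing its middle term $\{\bKH(\rmE\rmG^{gm,m}\times \rmE\rmT^{gm,m}\times {\rm X},\rmT)|m\}$ as a mediator between the two right-hand vertical comparisons. The strategy is to verify that every side of that square except the top row already induces a weak-equivalence on $\holimm$, and then to force the top row to be a $\holimm$-equivalence by two-out-of-three.

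First I would check the left vertical. The weak-equivalences in the two rows of ~\eqref{K.diagm.0} identify the left column of ~\eqref{K.diagm.1} with the natural comparison $\bKH({\rm X},\rmT)\compl_{\rho_\rmG,\alpha(m)}\to \bKH({\rm X},\rmT)\compl_{\rho_\rmT,\alpha(m)}$, and this is a $\holimm$-equivalence by Theorem ~\ref{T.comp.vs.G.comp} applied to the $\bK(\Speck,\rmT)$-module $M=\bKH({\rm X},\rmT)$, which is $-n$-connected for large $n$ by Proposition ~\ref{KH.vanishing} (using that $X$ is a normal quasi-projective $\rmT$-scheme). Next, the two arrows joining the top-right and bottom-right corners to the middle term come from the projections $\rmE\rmG^{gm,m}\times \rmE\rmT^{gm,m}\times X\to \rmE\rmG^{gm,m}\times X$ and $\rmE\rmG^{gm,m}\times \rmE\rmT^{gm,m}\times X\to \rmE\rmT^{gm,m}\times X$; both are $\holimm$-equivalences by Proposition ~\ref{indep.class.sp} applied with the special subgroup $\rmH=\rmT$, noting that $\rmT$ acts freely on $\rmE\rmG^{gm,m}$ since $\rmG$ does. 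Finally, the bottom row is a $\holimm$-equivalence by the assumed Theorem ~\ref{key.thm.1} (the torus case).

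By commutativity of the right square, the composite from the top-left corner to the middle term via the bottom row and the bottom-right vertical is a $\holimm$-equivalence (being a composition of three $\holimm$-equivalences), and it agrees with the composite via the top-right corner. Since the top-right to middle vertical is itself a $\holimm$-equivalence, the top row must also be a $\holimm$-equivalence, proving the first assertion. For the second assertion concerning the middle row of ~\eqref{splittings}, Lemma ~\ref{B.vs.T} together with the $\mathbb{A}^1$-acyclicity of $\rmB/\rmT$ lets one rewrite this row as
\[
\bKH({\rm X},\rmT)\compl_{\rho_\rmG,\alpha(m)} \longrightarrow \bKH\bigl(\rmE\rmG^{gm,m}\underset{\rmT}{\times} X\bigr) \simeq \bKH(\rmE\rmG^{gm,m}\times X,\rmT),
\]
where the last identification uses that $\rmT$ acts freely on $\rmE\rmG^{gm,m}$; the top row of ~\eqref{K.diagm.0} then identifies this level-wise in $m$ with the top row of the right square of ~\eqref{K.diagm.1}, and the first assertion completes the proof.

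The principal obstacle I expect is checking that all these identifications and comparisons are coherently natural in the tower parameter $m$, so that the chase really passes to $\holimm$ rather than being merely a level-wise statement; in particular, one needs that Proposition ~\ref{indep.class.sp} produces $\holimm$-equivalences of pro-spectra compatible with the structure maps of the right square, and that the affine-bundle comparison $\rmE\rmG^{gm,m}\times_\rmT X \to \rmE\rmG^{gm,m}\times_\rmB X$ is sufficiently natural in $m$ to yield the identification used in the last step.
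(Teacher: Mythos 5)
Your proposal is correct and follows the same strategy as the paper: show the left vertical of the right square in \eqref{K.diagm.1} is a $\holim$-equivalence via Theorem~\ref{T.comp.vs.G.comp} and Proposition~\ref{KH.vanishing}, the two right-hand verticals via Proposition~\ref{indep.class.sp} with $\rmH=\rmT$, the bottom row via Theorem~\ref{key.thm.1}, and then conclude by two-out-of-three in the commutative square. Your unpacking of the second assertion via Lemma~\ref{B.vs.T} and the identifications in \eqref{K.diagm.0} simply makes explicit what the paper has already set up earlier in the proof of Theorem~\ref{main.thm.3} and dismisses as ``clear.''
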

\begin{proof}  Proposition ~\ref{indep.class.sp} with $\rmH$ there replaced by $\rmT$ shows that the vertical maps on the right in the diagram ~\ref{K.diagm.1} induce a weak-equivalence on taking the
homotopy inverse limit as $n \ra \infty$. In view of what we already observed, we now obtain the homotopy commutative diagram:
\be \begin{equation}
\xymatrix{{\bKH({\rm X}, \rmT)\compl_{\rho_{\rmG}}  } \ar@<1ex>[dd]^{\simeq} \ar@<1ex>[r]& {\holimn\{\bKH(\rmE\rmG^{gm, n}  \times {\rm X}, \rmT)|n\}} \ar@<1ex>[d]^{\simeq}\\
& {\holimn\{\bKH(\rmE\rmG^{gm, n}  \times \rmE\rmT^{gm,n}\times {\rm X}, \rmT)|n\}} \\
{\bKH({\rm X}, \rmT)\compl_{\rho_{\rmT}}} \ar@<1ex>[r]^{\simeq} & {\holimn \{ \bKH(\rmE\rmT^{gm, n} \times {\rm X}, \rmT)|n\}} \ar@<-1ex>[u]_{\simeq}}
\end{equation} \ee
\vskip .2cm
Observe that the left vertical map is a weak-equivalence by Theorem ~\ref{T.comp.vs.G.comp} and Proposition ~\ref{KH.vanishing}.
 Since all other maps in the above diagram are
now weak-equivalences except possibly for the map in the top row, it follows that is also a
weak-equivalence. This proves the first statement. The second statement is clear.
\end{proof}

\section{Proof in the case of a split torus}
In view of the above reductions, it suffices to assume that the group scheme $\rmH=\rmT= {\mathbb G}_m^n$= a split torus. In case the group scheme
is a smooth diagonalizable group-scheme, we may imbed that as a closed subgroup scheme of a split torus. Then, making use of 
the remarks following \cite[Theorem 1.6]{CJ23}, one may extend the results in this section to
actions of smooth diagonalizable group schemes also. However, we will not discuss this case explicitly.
Thus assuming $\rmH=\rmT= {\mathbb G}_m^n$= a split torus, a key step is provided
by Theorem ~\ref{key.part.thm} below.
\vskip .2cm
The following observations will play a major role in the proof of Theorem ~\ref{key.part.thm}.
\begin{enumerate}[\rm(i)]
\item First observe that if $\rmT={\mathbb G}_m^n$ is an $n$-dimensional split torus, then
$\rmE\rmT^{gm, m} = ({\mathbb A}^{m+1}-0)^n$ and $\rmB\rmT^{gm,m} = ({\mathbb P}^m)^n$. We now obtain a 
compatible collection of maps 
\[\{\bKH(\rmX, \rmT) \ra \bKH(\rmX \times_{\rmT}\rmE\rmT^{gm,m}, \rmT) | m \ge 0\},\]
where $\rmX$ is a scheme of finite type over $\rmS$ provided with an action by the split torus $\rmT$. 
\item
For $i=1, \cdots, n$,  let $\rho_{\rmT_i}: \bK(\rmS, {\mathbb G}_m) \ra \bK(\rmS)$ denote the map induced by the restriction map where ${\mathbb G}_m$ is the 
$i$-th factor in the split torus $\rmT ={\mathbb G}_m^{\times n}$. Let $\rho_{\rmT}= \rho_{\rmT_1} \wedge \cdots \wedge \rho_{\rmT_n}$. Then making use of 
\cite[Proposition 5.2, (5.0.4) and (5.0.5)]{CJ23}, we proceed to establish the following weak-equivalences
 (in Theorem ~\ref{key.part.thm}), where $\rmX$ is as in (i):
\be \begin{align}
     \label{der.compl.0}
\bKH(\rmX,  \rmT)\compl_{\rho_T, m_1, \cdots, m_n} &\simeq \bKH(\rmX \times _{\rmT}(\rmE\rmT^{gm,m_1, \cdots, m_n}))  = \bKH(\rmX \times _{\rmT}({\mathbb A}^{m_1}-\{0\} \times \cdots {\mathbb A}^{m_n}-\{0\})), \mbox{ and }\\
\bKH(\rmX, \rmT)\compl_{\rho_T}  &\simeq \bKH(\rmX\times_{\rmT}\rmE\rmT^{gm}) = \holimm \bKH(\rmX \times _{\rmT}({\mathbb A}^{m}-\{0\} \times \cdots \times {\mathbb A}^m-\{0\})). \notag
\end{align} \ee
\vskip .2cm \noindent
Corresponding statements hold for the completion with respect to the map $\rho_{\ell} \circ \rho_{\rmT}:\bK(\rmS, T) \ra \bK(\rmS ) \ra  \bK(\rmS ){\underset {\mbS} \wedge} \H(\Z/\ell)$ for a fixed prime $\ell$ different
from $char(k)=p$.
 \end{enumerate}

\begin{theorem}
 \label{key.part.thm}
Assume the base scheme $\rmS$ is a field $k$, that $\rmT= {\mathbb G}_m^n$ is a split torus over $k$ and $\rmX$ is a scheme of finite type over $\rmS$ provided with an action by $\rmT$.
For each $i=1, \cdots, n$, let $\rmT_i$ denote
the ${\mathbb G}_m$ forming the $i$-th factor of $\rmT$. Let $\rho_{\rmT}: \bK(\rmS, \rmT) \ra 
\bK(\rmS )$  denote the restriction map. 
Then the map $\bKH(\rmX, \rmT) \ra \bK(\rmE\rmT^{gm,m} \times \rmX, \rmT) = \bKH(\rmE\rmT^{gm,m} \times_{\rmT} \rmX)$ factors through the multiple partial derived completion
\[\bKH(\rmX, \rmT) \compl_{\rho_T, m, \cdots, m} = \holimDm \sigma _{\le m} \cdots \holimDm \sigma_{\le m}{\mathcal T}^{\bullet, \cdots, \bullet}_{\bK(\rmS, \rmT_1), \cdots, \bK(\rmS, \rmT_n)}(\bKH(\rmX, \rmT), \bK(\rmS))\]
 and induces a weak-equivalence:
\[\bKH(\rmX, \rmT)\compl_{\rho_{\rmT, m, \cdots, m}} \simeq \bKH(\rmE \rmT^{gm, m} \times_{\rmT}\rmX).\]
 Corresponding results also hold for the homotopy K-theory spectra replaced by the mod-$\ell$ homotopy $K$-theory
 spectra defined as in ~\eqref{KGl}.
\end{theorem}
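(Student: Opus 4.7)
The plan is to proceed by induction on $n$, the rank of the torus $\rmT = \mathbb{G}_m^n$, reducing to the base case $\rmT = \mathbb{G}_m$. The key structural input for the inductive step is that the multiple partial derived completion $\compl_{\rho_{\rmT}, m, \cdots, m}$ is naturally identified with an iterated single-variable completion, one for each factor $\rmT_i$, by Fubini for homotopy limits; this follows the same pattern established in \cite[Proposition 5.2, (5.0.4) and (5.0.5)]{CJ23} and matches the geometric fact that $\rmE\rmT^{gm,m} = \prod_{i=1}^n (\mathbb{A}^{m+1}-\{0\})$ is a product over the factors. Thus it suffices to run the argument for a single $\rmT_i$ factor, provided everything is carried out for $\bKH(-,\rmT)$ (and not just $\bKH(-,\rmT_i)$), so that the remaining $n-1$ torus actions persist through the construction.

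For the base case $\rmT = \mathbb{G}_m$, let $L$ denote the tautological one-dimensional character, so that $\rmR(\mathbb{G}_m) = \mathbb{Z}[L^{\pm 1}]$ and the augmentation ideal is generated by $1-L$. The plan is to apply the $\rmT$-equivariant localization sequence from Theorem ~\ref{loc.seq} to the pair $(\rmX \times \mathbb{A}^{m+1}, \rmX \times \{0\})$, where $\mathbb{G}_m$ acts diagonally on $\mathbb{A}^{m+1}$ by the standard character, giving a cofiber sequence
\[\bKH(\rmX \times \mathbb{A}^{m+1} \text{ on } \rmX \times \{0\}, \mathbb{G}_m) \to \bKH(\rmX \times \mathbb{A}^{m+1}, \mathbb{G}_m) \to \bKH(\rmX \times (\mathbb{A}^{m+1}-\{0\}), \mathbb{G}_m).\]
By homotopy invariance (~\ref{KH.props}(4)) the middle term is weakly equivalent to $\bKH(\rmX, \mathbb{G}_m)$; by the Thom isomorphism (Proposition ~\ref{Thm.isom}) applied to the zero section of the trivial equivariant vector bundle $\mathbb{A}^{m+1}$, the first term is also weakly equivalent to $\bKH(\rmX, \mathbb{G}_m)$, with the connecting map to the middle term being cup product with $\lambda_{-1}(L^{\oplus(m+1)}) = (1-L)^{m+1}$. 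Hence $\bKH(\rmX \times (\mathbb{A}^{m+1}-\{0\}), \mathbb{G}_m) \simeq \bKH(\rmX \times_{\mathbb{G}_m} \rmE\mathbb{G}_m^{gm,m})$ is identified with the homotopy cofiber of multiplication by $(1-L)^{m+1}$.

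On the algebraic side, the partial derived completion $\bKH(\rmX, \mathbb{G}_m)\compl_{\rho_{\mathbb{G}_m}, m}$ is computed (using the cosimplicial description recalled in ~\eqref{der.compl.1} and truncating to degree $m$) as $\bKH(\rmX, \mathbb{G}_m) \wedge_{\bK(\rmS, \mathbb{G}_m)} \bigl(\bK(\rmS, \mathbb{G}_m)/\rmI_{\mathbb{G}_m}^{L \wedge (m+1)}\bigr)$, which by the augmentation ideal calculation $\rmI_{\mathbb{G}_m} = (1-L)$ is precisely the cofiber of multiplication by $(1-L)^{m+1}$. These two cofibers agree naturally, which produces the desired weak-equivalence, and the factorization through $\compl_{\rho_T, m, \cdots, m}$ is built into the identification. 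The mod-$\ell$ version follows by smashing the entire argument with $\H(\mathbb{Z}/\ell)$, since the localization sequence and Thom isomorphism are both compatible with this operation.

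The main obstacle I expect is making the identification of the Thom class with $(1-L)^{m+1}$ in $\pi_0 \bKH$ fully rigorous for potentially singular $\rmX$, and carefully tracking the compatibility between the algebraic partial completion (as a homotopy limit of a truncated cosimplicial diagram) and the geometric cofiber description when iterating over $n$ torus factors — one must ensure that at each stage the remaining torus actions on all intermediate spectra are preserved, so that the final iterated completion matches the multiple partial completion $\compl_{\rho_T, m, \cdots, m}$ on the nose rather than just factor by factor.
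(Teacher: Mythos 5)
Your proposal is correct and takes essentially the same route as the paper: the paper likewise identifies $\bKH(\rmE\rmT_i^{gm,m}\times_{\rmT_i}\rmX, \rmT_i')$ with the homotopy cofiber of multiplication by $(\lambda_i-1)^{m+1}$ via homotopy invariance plus the Koszul--Thom class, identifies that cofiber with the partial derived completion, and then iterates one ${\mathbb G}_m$-factor at a time while retaining the residual torus action, exactly as you outline. The only step you treat a bit quickly --- the spectrum-level identification of $\rmI_{\rmT_i}$ with $\bK(\rmS,\rmT)$ mapping by $(\lambda_i-1)$, beyond the $\pi_0$ statement that the augmentation ideal is $(1-L)$ --- is obtained in the paper from the $m=0$ case of the very same Thom/localization diagram, so it fits naturally into your argument.
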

\begin{proof} Since all the arguments in the proof carry over when the $K$-theory spectrum $\bK$ is replaced by the
mod-$\ell$ K-theory spectra, we will only consider the K-theory spectrum. First observe that 
\be \begin{align}
     \label{KST}
\pi_*(\bK(\rmS, \rmT)) &= \rmR(\rmT) {\underset {\Z} \otimes} \pi_*(\bK(\rmS)) \cong {\mathbb Z}[t_1, \cdots, t_n, t_1^{-1}, \cdots, t_n^{-1}] \otimes \pi_*(\bK(\rmS))
\end{align} \ee
\vskip .2cm \noindent
{\it Step 1}. Let $\rmT_i$ denote a fixed $1$-dimensional torus (that is, ${\mathbb G}_m$) forming a factor of $\rmT$: it corresponds to the fixed generator $t_i$ of $\rmR(\rmT)$. 
Let $\lambda_i$ denote the $1$-dimensional representation of $\rmT$ corresponding to $t_i$. For each $m \ge 0$,
$(\lambda_i -1)^{m_i+1}$ defines a class in $\pi_0(\bK(\rmS, \rmT)) = \rmR(\rmT) \otimes \pi_0(\bK(\rmS))$. 
Viewing this class as a map $\mbS \ra \bK(\rmS, \rmT)$ (recall $\mbS$ denotes 
the $\rmS^1$-sphere spectrum), the composition $\mbS \wedge \bK(\rmS, \rmT) \ra \bK(\rmS, \rmT) \wedge \bK(\rmS, \rmT) {\overset \mu \ra} \bK(\rmS, \rmT)$
defines a map of spectra 
\[\bK(\rmS, \rmT) \ra \bK(\rmS, \rmT),\]
which will be still denoted $(\lambda_i -1)^{m+1}$. (Here $\mu$ is the ring structure on the spectrum $\bK(\rmS, \rmT)$.) {\it The heart of the proof of this theorem involves showing that the homotopy cofiber of
the map $(\lambda_i -1)^{m_i+1}$ identifies with the partial derived completion to the order $m_i$, along 
the restriction map $\rho_{\rmT_i}: \bK(\rmS, \rmT_i) \ra \bK(\rmS)$.}
\vskip .2cm \noindent
{\it Step 2}. 
Let $\rmX \times {\mathbb A}^{m_i+1}$ denote the vector bundle of rank $m_i+1$ over $\rmX$ on which the torus $\rmT$ acts as follows:
it acts diagonally on the two factors, and where the action on ${\mathbb A}^{m_i+1}$ is through the character corresponding to $\lambda_i$ on each fiber  ${\mathbb A}^{m_i+1}$. 
The {\it Koszul-Thom class} of this bundle is $\pi^*(\lambda_i -1)^{m_i+1}$, where $\pi: \rmX \times {\mathbb A}^{m_i+1} \ra \rmX$ is the
projection. This defines a class in 
$\pi_0({\bKH(\rmX \times {\mathbb A}^{m_i+1}, \rmX \times ({\mathbb A}^{m_i+1}-0), T)})$. 
 Next consider the  diagram:
\be \begin{equation}
     \label{key.diagm}
\xymatrix{{\bKH(\rmX, \rmT)} \ar@<1ex>[r]^{(\lambda_i-1)^{m+1}} \ar@<-1ex>[d]^{\simeq} & {\bKH(\rmX, \rmT)} \ar@<1ex>[r] \ar@<-1ex>[d] ^{\simeq} & {cofiber ((\lambda_i-1)^{m_i+1})} \ar@<1ex>[d]\\
{\bKH(\rmX \times {\mathbb A}^{m_i+1}, \rmX \times ({\mathbb A}^{m_i+1}-0), T)} \ar@<1ex>[r] & {\bKH(\rmX \times {\mathbb A}^{m_i+1}, \rmT)} \ar@<1ex>[r] & {\bKH(\rmX \times ({\mathbb A}^{m_i+1}-0), \rmT)}}
  \end{equation} \ee
\vskip .2cm \noindent
where the bottom row is the stable homotopy fiber sequence associated to the  homotopy cofiber sequence: $\rmX\times( {\mathbb A}^{m_i+1}-0) \ra \rmX \times {\mathbb A}^{m_i+1} \ra 
(\rmX \times {\mathbb A}^{m_i+1})/(\rmX \times ({\mathbb A}^{m_i+1}-0))$ in the ${\mathbb A}^1$-stable homotopy category. 
The top row is the homotopy cofiber sequence associated to the map denoted $(\lambda_i -1)^{m_i +1}$.
\vskip .2cm
The left-most vertical map
is provided by {\it Thom-isomorphism}, that is,  by cup-product with the Koszul-Thom-class $\pi^*(\lambda_i -1)^{m_i+1}$: see Proposition ~\ref{Thm.isom}. Therefore, this map is
a weak-equivalence. The second vertical map is a weak-equivalence by the homotopy property. It is also clear that the left-most
square homotopy commutes. Therefore, the right square also homotopy commutes and the right most vertical map is also a weak-equivalence.
If $\rmT = \rmT'_i \times \rmT_i$, then one also obtains the identification:
\[\bKH(\rmX \times ({\mathbb A}^{m_i+1}-\{0\}), \rmT) \simeq \bKH(\rmX \times_{\rmT_i} ({\mathbb A}^{m_i+1}-\{0\}), \rmT_i').\]
\vskip .2cm \noindent
{\it Step 3}. Next we take $m_i=0$. Then it follows from what we just showed that the map $cofiber(\lambda_i -1) \ra \bKH(\rmX \times ({\mathbb A}^1-0), \rmT) 
\simeq \bKH(\rmX \times_{\rmT_i} ({\mathbb A}^1-\{0\}), \rmT_i')= \bKH(\rmX , \rmT_i')$
(forming the last vertical map in ~\eqref{key.diagm}) is a weak-equivalence. Now the map 
$\bKH(\rmX \times {\mathbb A}^{1}, \rmT) \ra \bKH(\rmX, \rmT_i)$
forming the bottom row in the right-most  square of ~\eqref{key.diagm} identifies with restriction
map $\bKH(\rmX, \rmT) \ra \bKH(\rmX, \rmT_i')$ induced by the inclusion $\rmT_i' \subseteq \rmT$. Therefore, by the homotopy commutativity of the right-most square of 
~\eqref{key.diagm}, it follows that the homotopy fiber of the restriction map $\rho_{\rmT_i}:\bKH(\rmX, \rmT) \ra \bKH(\rmX, \rmT_i')$ identities 
up to weak-equivalence with the homotopy fiber of the map $\bKH(\rmX, \rmT) \ra cofiber(\lambda_i -1)$, that is,  
with the map $(\lambda_i -1):\bKH(\rmX, \rmT) \ra \bKH(\rmX, \rmT)$. 
\vskip .2cm
{\it Step 4}. Next recall (using \cite[Corollary 6.7]{C08}) that the partial derived completion $\bKH(\rmX, \rmT) \compl_{\rho_{\rmT_i}, m_i}$  
may be identified as follows.
Let $\tilde I_{\rmT_i} \ra I_{\rmT_i}$ denote a cofibrant replacement in the 
category of module spectra over $\bK(\rmS, \rmT)$, with $I_{\rmT_i}$ denoting the homotopy fiber of the restriction $\bK(\rmS, \rmT) \ra \bK(\rmS, \rmT_i')$. Then
\[\bKH(\rmX, \rmT) \compl_{\rho_{\rmT_i}, m_i}=  Cofib( {\overset {m+1} {\overbrace {{\tilde I_{\rmT_i}} {\underset {\bK(\rmS, \rmT)} \wedge} \tilde I_{\rmT_i} {\underset {\bK(\rmS, \rmT)} \wedge} \cdots {\underset {\bK(\rmS, \rmT)} \wedge} \tilde I_{\rmT_i}}}{\underset {\bK(\rmS, \rmT)} \wedge} \bKH(\rmX, \rmT)} \ra \bKH(\rmX, \rmT)).\]
What we have just shown in Step 3 is  that the homotopy fiber $I_{\rmT_i}$ of the restriction  map $ \bK(\rmS, \rmT) \ra \bK(\rmS, \rmT_i')$ identifies with the map $\bK(\rmS, \rmT) {\overset {(\lambda_i -1)} \ra} \bK(\rmS, \rmT)$ and hence is clearly cofibrant over $\bK(\rmS, \rmT)$:
that is,  $\tilde I_{\rmT_i} = I_{\rmT_i}$ is simply $\bK(\rmS, \rmT)$ mapping into
$\bK(\rmS, \rmT)$ by the map $(\lambda_i -1)$. Therefore the homotopy cofiber
\[Cofib( {\overset {m_i+1} {\overbrace {{\tilde I_{\rmT_i}} {\underset {\bK(\rmS, \rmT)} \wedge} \tilde I_{\rmT_i} {\underset {\bK(\rmS, \rmT)} \wedge} \cdots {\underset {\bK(\rmS, \rmT)} \wedge} \tilde I_{\rmT}}}{\underset {\bK(\rmS, \rmT)} \wedge} \bKH(\rmX, \rmT)   } \ra \bKH(\rmX, \rmT_i')) = Cofiber (\bKH(\rmX, \rmT) {\overset {(\lambda_i -1)^{m_i+1}} \ra} \bKH(\rmX, \rmT)).\]
 Making use of the  the diagram ~\eqref{key.diagm}, these observations prove  that
\[\bKH(\rmX, \rmT) \compl_{\rho_{\rmT_i}, m_i} \simeq cofiber (\lambda_i -1)^{m_i+1} \simeq \bKH(\rmX \times_{\rmT_i} ({\mathbb A}^{m_i+1}-0), \rmT_i') =\bKH(\rmX \times_{\rmT_i} \rmE\rmT_i^{gm,m_i}, \rmT_i').\]
\vskip .2cm \noindent
 One may observe that the above argument in fact completes the proof for the case $\rmT$ is a  $1$-dimensional 
torus.  (It may be worthwhile pointing out that classical argument due to Atiyah and Segal for the
usual completion for torus actions in equivariant topological K-theory is similar: see \cite[section 3, Step 1]{AS69}.)
\vskip .2cm
{\it Step 5}. In case the split torus $\rmT = \Pi_{i=1}^n {\mathbb G}_m$, one may now repeat the argument with $\rmX$ replaced by
$\rmX \times_{\rmT_i} \rmE\rmT_i^{gm,m}$ and $\rmT$ replaced by $\rmT_i'$. An ascending induction on $n$, then completes the proof in view
of \cite[Proposition 5.3 (i) and (ii)]{CJ23}.
 \end{proof}
We obtain the following corollary to Theorem ~\ref{key.part.thm}.
 \begin{corollary}
  \label{der.comp.negK}
Assume as in Theorem ~\ref{key.part.thm} that the base scheme $\rmS$ is the spectrum of a field $k$, 
$\rmT= {\mathbb G}_m^n$ is a split torus over $\rmS$ and $\rmX$ is a scheme of finite type over $\rmS$ provided with an action by $\rmT$.
Then we obtain the weak-equivalences for any $n$ invertible in $\k$, where $\mbS/n$ denotes the mod-$n$ Moore spectrum, and where ${\mathbb K}(\quad, \rmT)$ denotes the $\rmK$-theory spectrum defined in ~\eqref{str.Kth.sp}:
\begin{enumerate}[\rm(i)]
\item $(\mbS/n {\underset  {\mbS}\wedge } {\mathbb K}(\rmX, \rmT))_{\rho_{\rmT, m}} \simeq\mbS/n {\underset  {\mbS}\wedge } ({\mathbb K}(\rmX, \rmT)_{\rho_{\rmT, m}}) \simeq \mbS/n {\underset  {\mbS}\wedge } {\mathbb K}(\rmE\rmT^{gm,m} \times_{\rmT}\rmX, \rmT), m \ge 1$, \mbox{ and}
\item $(\mbS/n {\underset  {\mbS}\wedge } {\mathbb K}(\rmX, \rmT))_{\rho_{\rmT}} \simeq \mbS/n {\underset  {\mbS}\wedge } {\mathbb K}(\rmE\rmT^{gm}\times_{\rmT} \rmX, \rmT)$.
\end{enumerate}
\end{corollary}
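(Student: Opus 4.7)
The plan is to deduce both parts of the corollary from Theorem~\ref{key.part.thm} by combining two ingredients: a formal commutation of $\mbS/n\wedge(-)$ with partial derived completion, and an equivariant mod-$n$ comparison identifying ${\mathbb K}(-,\rmT)$ with $\bKH(-,\rmT)$ when $n$ is invertible in $k$. Part (ii) will then follow from part (i) by passing to the homotopy inverse limit as $m\to\infty$ and invoking the identification of $\holimm(-)_{\rho_{\rmT},m}$ with the full completion $(-)_{\rho_{\rmT}}$ from~\eqref{der.compl.1}.

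For the first equivalence in (i), I would observe that the partial derived completion $(-)_{\rho_{\rmT},m}$ is a finite homotopy limit (the bounded totalization of the cosimplicial object in~\eqref{der.compl.1}) and hence an exact endofunctor of the stable $\infty$-category of $\bK(\rmS,\rmT)$-module spectra. Applying it to the cofiber sequence $X\xrightarrow{n} X\to \mbS/n\wedge X$ with $X={\mathbb K}(\rmX,\rmT)$ yields the natural equivalence $(\mbS/n\wedge{\mathbb K}(\rmX,\rmT))_{\rho_{\rmT},m}\simeq\mbS/n\wedge\bigl({\mathbb K}(\rmX,\rmT)_{\rho_{\rmT},m}\bigr)$. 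For the second equivalence, the key input is the equivariant mod-$n$ comparison
\[
c_{\rmY}\colon{\mathbb K}(\rmY,\rmT)\wedge\mbS/n \;\longrightarrow\; \bKH(\rmY,\rmT)\wedge\mbS/n,
\]
which one expects to be a weak-equivalence for every $\rmT$-scheme $\rmY$ of finite type over $\rmS$ when $n$ is invertible in $k$. Granting $c_{\rmY}$, the argument of Theorem~\ref{key.part.thm} has an evident $\mbS/n$-smashed analogue, since every step in that proof (the Thom isomorphism of Proposition~\ref{Thm.isom}, the localization sequence of Theorem~\ref{loc.seq}, and the inductive reduction along $\rmT=\rmT_1\times\cdots\times\rmT_n$) is exact and hence stable under smashing with an arbitrary spectrum. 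Concatenating $c_{\rmX}$, this analogue, and $c_{\rmE\rmT^{gm,m}\times_{\rmT}\rmX}$ produces the desired equivalence in (i). For (ii) one passes to $\holimm$; this is legitimate because Proposition~\ref{KH.vanishing} supplies uniform negative-connectivity bounds that render the towers of partial completions Mittag--Leffler, so smashing with the compact spectrum $\mbS/n$ commutes with the sequential inverse limit.

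The main obstacle will be establishing the comparison $c_{\rmY}$. Two approaches appear viable. The first is to pass to the quotient stack $[\rmY/\rmT]$: under our running hypotheses (cf.\ the proof of Proposition~\ref{KH.vanishing}) this stack has affine diagonal and linearly reductive stabilizers, and one can transport the nonequivariant cdh-descent of mod-$n$ K-theory---originally due to Thomason and subsequently refined by Cortiñas--Haesemeyer--Schlichting--Weibel and by Kerz--Strunk--Tamme---to the equivariant setting. The second is a direct induction on the dimension of $\rmY$ using the equivariant localization sequences of Proposition~\ref{loc.tori} and Theorem~\ref{loc.seq}, exploiting $\AA^1$-invariance of mod-$n$ K-theory on affine $\rmT$-bundles to reduce to smooth normal strata where the equality ${\mathbb K}=\bKH$ already holds by the regularity case noted in Subsection~\ref{KH.props}.
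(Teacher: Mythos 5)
Your overall route is the same as the paper's: smash the equivalence of Theorem~\ref{key.part.thm} with $\mbS/n$, commute the (partial) derived completions with $\mbS/n\wedge(-)$, identify mod-$n$ $\bKH$ with mod-$n$ ${\mathbb K}$ on both sides, and pass to $\holimm$ for (ii). The one substantive difference is how the comparison you call the ``main obstacle'' is handled: the paper does not prove it but imports it, citing \cite[Theorems 9.5, 9.6]{ThTr} together with \cite{Wei83} for the (scheme-level) Borel constructions and \cite[Corollary 5.5(1)]{KR} for the equivariant statement. Your first suggested strategy (pass to the quotient stack and transport mod-$n$ descent and $\AA^1$-invariance) is essentially how that cited result is proved, so only a reference is needed; note also that once the completion commutes with $\mbS/n\wedge(-)$, you can simply smash the already-established equivalence of Theorem~\ref{key.part.thm}, so re-running its proof mod $n$ is redundant.

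Two cautions. First, your fallback strategy of induction on $\dim\rmY$ via the localization sequences is doubtful as stated: Proposition~\ref{loc.tori} and Theorem~\ref{loc.seq} produce the term $\bKH(\rmX \, on \, \rmZ, \rmT)$, i.e.\ K-theory with supports, which on singular schemes is not the K-theory of the closed stratum $\rmZ$ (there is no d\'evissage for perfect complexes), so stratifying does not obviously reduce you to regular pieces where ${\mathbb K}\simeq\bKH$. Second, your justification for commuting $\mbS/n\wedge(-)$ with the inverse limits via connectivity and a Mittag--Leffler condition is unnecessary and not really the right reason: as the paper observes, $\mbS/n\wedge A$ is (up to a shift) the homotopy fiber of multiplication by $n$ on $A$, and taking such fibers commutes with arbitrary homotopy inverse limits --- both the finite ones defining the partial completions and the sequential one over $m$ --- with no connectivity hypotheses; this single observation cleanly handles the commutation needed in both (i) and (ii).
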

\begin{proof} We start with the sequence of maps 
 \[\{\bKH(\rmX, \rmT)_{\rho_{\rmT}, m} {\overset {\simeq}\longrightarrow} \bKH(\rmE\rmT^{gm,m} \times_{\rmT}\rmX, \rmT)|m\} \]
provided by Theorem ~\ref{key.part.thm}. Now we smash all the terms with $\mbS/n$ over $\mbS$. In view of \cite[Theorems 9.5, 9.6]{ThTr} and
\cite[p. 391]{Wei83}, one obtains the compatible identifications:
\[\{\mbS/n {\underset  {\mbS}\wedge } \bKH(\rmE\rmT^{gm,m} \times_{\rmT}\rmX, \rmT) \simeq \mbS/n {\underset  {\mbS}\wedge } {\mathbb K}(\rmE\rmT^{gm,m} \times_{\rmT}\rmX, \rmT)|m\}.\]
One also obtains the compatible identifications:
\[\{\mbS/n {\underset  {\mbS}\wedge } (\bKH(\rmX, \rmT)_{\rho_{\rmT}, m}) \simeq (\mbS/n {\underset  {\mbS}\wedge } (\bKH(\rmX, \rmT)))_{\rho_{\rmT}, m} \simeq (\mbS/n {\underset  {\mbS}\wedge } ({\mathbb K}(\rmX, \rmT)))_{\rho_{\rmT}, m} \simeq \mbS/n {\underset  {\mbS}\wedge } ({\mathbb K}(\rmX, \rmT)_{\rho_{\rmT}, m})|m\},\]
in view of \cite[Corollary 5.5(1)]{KR}. (To see that the finite homotopy inverse limits involved in the partial derived completions along $\rho_{\rmT}$ commute
with the smash product with $\mbS/n$, one needs to consider instead the homotopy fiber of the map 
induced by the map $\{\mbS\wedge A_m {\overset {n\wedge id} \ra} \mbS\wedge A_m|m\}$ for any truncated 
cosimplicial object of spectra $\{A_m|m\}$. This clearly commutes with
 homotopy inverse limit of the truncated cosimplicial object.) Therefore, it remains to take the homotopy inverse limit as $m \ra \infty$. While such homotopy inverse limits again do
not necessarily commute with smash products, in this case it does because of the following observation. 
Since we are working stably, it suffices 
to observe that the homotopy inverse limit over $m \ra \infty$ commutes with taking the homotopy fiber of the map induced by the map $\{\mbS\wedge A_m {\overset {n\wedge id} \ra} \mbS\wedge A_m|m\}$ for any
cosimplicial object of spectra $\{A_m|m\}$.
\end{proof}

\vskip .2cm
\begin{theorem}
\label{key.thm.1} Assume the base scheme $\rmS$ is a field $k$ and that $\rmX$ is a scheme of finite type over $k$. Let $\rmH=\rmT={\mathbb G}_m^n$ denote a split torus acting on $\rmX$. Then, for each fixed positive integer $m$,
 the map 
\[\bKH({\rm X}, \rmH)  \ra \bKH({\rm E}\rmH^{gm,m}{ \times}\rmX, \rmH) \simeq\bKH({\rm E}\rmH^{gm,m}{\underset {\rmH}  \times}\rmX)\]
 factors through the partial derived completion $\bKH({\rm X}, \rmH)\compl_{\rho_{\rmH},m}$ and the induced map 
\[\bKH({\rm X}, \rmH)\compl_{\rho_{H}} \ra \holimm \bKH({\rm E}\rmH^{gm,m}{\underset {\rmH}  \times}\rmX)\]
\newline \noindent
is a weak-equivalence. Moreover, this weak-equivalence is natural in 
both $\rmX$ and $\rmH$.
 The corresponding assertions hold
with the ${\rm KH}$-theory spectrum replaced by the mod-$\ell$ ${\rm KH}$-theory spectrum (defined as in ~\eqref{KGl}) in general, and by the $K$-theory and mod-$\ell$ K-theory spectrum
 when $\rmX$ is  regular. It also holds with the spectrum $\bKH$ replaced by the spectrum $\mbS/n {\underset  {\mbS}\wedge } {\mathbb K}$ when $n$ is invertible in 
 the base field $\k$.
\end{theorem}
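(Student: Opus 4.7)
The plan is to deduce Theorem \ref{key.thm.1} from the already-established Theorem \ref{key.part.thm}, by passing to the homotopy inverse limit and identifying the multi-variable total derived completion with the single-variable one along $\rho_H$. First, the factorization statement follows directly from Proposition \ref{factoring.through.part.compl}(i) applied with $G = H = T$: for each $m$, the map $\bKH(X,H) \to \bKH(EH^{gm,m}\times_H X)$ factors through the partial derived completion $\bKH(X,H)\compl_{\rho_H, \alpha(m)}$ for some $\alpha(m)\ge m+1$. By Lemma \ref{comp.towers} and Proposition \ref{comp.towers.2}, these factorizations assemble into a compatible map of towers as $m$ varies.

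Next I would take the homotopy inverse limit of this compatible system as $m\to\infty$. The left-hand side becomes the total derived completion $\bKH(X,H)\compl_{\rho_H}$, essentially by the definition of the total derived completion as the homotopy inverse limit of its partial truncations. For the right-hand side, Theorem \ref{key.part.thm} supplies a level-wise weak-equivalence
$$\bKH(X,T)\compl_{\rho_T, m, \ldots, m} \simeq \bKH(ET^{gm,m}\times_T X),$$
so taking $\holim_m$ identifies $\holim_m \bKH(EH^{gm,m}\times_H X)$ with the homotopy inverse limit of the multi-variable partial completions along the diagonal. The agreement of this diagonal $\holim$ with $\bKH(X,H)\compl_{\rho_H}$ --- both being completions at the same augmentation ideal $I\subset R(T)\cong \Z[t_1^{\pm 1},\ldots,t_n^{\pm 1}]$ --- follows from \cite[Proposition 5.3]{CJ23}, combined with the cofinality of the diagonal $\mathbb{N}\hookrightarrow \mathbb{N}^n$.

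For the remaining assertions, naturality in both $X$ and $H$ is built into Propositions \ref{factoring.through.part.compl} and \ref{comp.towers.2} and is preserved by homotopy inverse limits. The mod-$\ell$ variant follows by smashing the entire chain of identifications with $\H(\Z/\ell)$; the smash product commutes with the partial completions and the $\holim$s involved by the same argument as in the proof of Corollary \ref{der.comp.negK}. For the $\mbS/n{\underset {\mbS}\wedge}{\mathbb K}$ form one invokes Corollary \ref{der.comp.negK} directly. For $X$ regular, the natural map $\bK(X,H)\to\bKH(X,H)$ is a weak-equivalence by \ref{KH.props}(8), and the entire chain of identifications transports to $\bK$-theory and its mod-$\ell$ variant.

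The main obstacle is the identification of the homotopy inverse limit of the multi-variable partial derived completions along the diagonal with the single-variable total derived completion $\bKH(X,H)\compl_{\rho_H}$: at finite partial-completion levels these are genuinely distinct spectra, and the equivalence emerges only in the limit. Once this identification is in place, all remaining ingredients --- the factorization, the functoriality, and the mod-$\ell$/regular variants --- follow essentially formally from the already-established Theorem \ref{key.part.thm}.
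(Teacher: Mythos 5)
Your proposal is correct and follows essentially the same route as the paper: the paper's own proof simply takes the homotopy inverse limit as $m \ra \infty$ of the level-wise weak-equivalences of Theorem \ref{key.part.thm}, identifying the limit of the multi-variable partial completions with $\bKH(\rmX,\rmT)\compl_{\rho_{\rmT}}$ via the cited results of \cite[Propositions 5.2, 5.3]{CJ23}, and handles the $\mbS/n\wedge{\mathbb K}$ case by Corollary \ref{der.comp.negK}, exactly as you do. The only small caveat is that Lemma \ref{comp.towers} and Proposition \ref{comp.towers.2} are stated for products of general linear groups, so for the torus case you should instead appeal to the compatibility built into Theorem \ref{key.part.thm} (or to Proposition \ref{factoring.through.part.compl}, which does cover split tori), but this does not affect the substance of the argument.
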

\begin{proof} The proof follows readily by taking the homotopy inverse limit as $m \ra \infty$ of the weak-equivalence
\[\bKH(\rmX, \rmT)\compl_{\rho_{\rmT, m, \cdots, m}} \simeq \bKH((\rmE \rmT^{gm, m} \times \cdots \times \rmE\rmT^{gm,m}) \times_{\rmT}\rmX)\]
established in Theorem ~\ref{key.part.thm} above. The last statement follows from Corollary ~\ref{der.comp.negK}.
\end{proof}

We conclude this section with the proof of Theorem ~\ref{main.thm.1}.
\vskip .2cm \noindent
{\bf Proof of Theorem ~\ref{main.thm.1}}. Observe that Theorem ~\ref{key.thm.1} proves Theorem ~\ref{main.thm.1} when the group-scheme is 
a split torus and Theorem ~\ref{main.thm.3} then proves Theorem ~\ref{main.thm.1} when the group-scheme is a finite product of $\GL_n$s.
Next let $\rmG$ denote a not necessarily connected linear algebraic group over $\k$: we imbed $\rmG$ as a closed linear algebraic subgroup
of $\tilde \rmG$, which is a connected split reductive group over $k$ satisfying the Standing Hypotheses ~\ref{stand.hyp.1}(ii). (Observe that, in particular, 
$\tilde \rmG$ is special.) Then, we may imbed
$\tilde \rmG$ as a closed sub-group-scheme in a finite product $\GL_{n_1} \times \cdots \times \GL_{n_q}$ so that the restriction map
$\rmR(\GL_{n_1} \times \cdots \times \GL_{n_q}) \ra \rmR(\tilde \rmG)$ is surjective. Therefore we obtain the weak-equivalences:
\[\bKH({\rm X}, \rmG)\compl_{\rho_{ \rmG}} \simeq \bKH({\rm X}, \rmG)\compl_{\rho_{\tilde \rmG}} \simeq \bKH(\tilde \rmG \times _{\rmG}{\rm X}, \tilde \rmG)\compl_{\rho_{\tilde \rmG}} \simeq \bKH(\tilde \rmG\times_{\rmG}{\rm X}, \tilde \rmG) \compl_{\rho_{\GL_{n_1} \times \cdots \times \GL_{n_q}}} \]
\[\simeq \bKH((\GL_{n_1} \times \cdots \times \GL_{n_q})\times_{\rmG} \rmX, \GL_{n_1} \times \cdots \times \GL_{n_q})\compl_{\rho_{\GL_{n_1} \times \cdots \times \GL_{n_q}}}\]
where the first and third weak-equivalences are by Proposition ~\ref{key.obs.2} (see also \cite[Theorem 1.6]{CJ23}), and the second and fourth are clear. 
Now we may assume $\tilde \rmG$ is in fact $\GL_{n_1} \times \cdots \times \GL_{n_q}$.  By Theorems ~\ref{key.thm.1} and ~\ref{main.thm.3},
then $\bKH(\tilde \rmG \times _{\rmG}{\rm X}, \tilde \rmG)\compl_{\rho_{\tilde \rmG}}$ identifies with 
\[\bKH(\rmE\tilde \rmG^{gm} \times _{\tilde \rmG}(\tilde \rmG\times_{\rmG}X)) \simeq \bKH(\rmE\tilde \rmG^{gm} \times _{\rmG}X).\]
This completes the proof of Theorem ~\ref{main.thm.1}(i). The proof of Theorem ~\ref{main.thm.1}(ii) is similar and is therefore skipped.
Since $ \rmG$ is assumed to be special, Proposition ~\ref{indep.class.sp} provides the weak-equivalence:
\[\bKH(\rmE\tilde \rmG^{gm} \times_{\rmG}X)) \simeq \bKH(\rmE \rmG^{gm} \times _{\rmG}X)\]
which proves Theorem ~\ref{main.thm.1}(iii). This completes the proof of Theorem ~\ref{main.thm.1}. \qed
\vskip .2cm
\section{Appendix: diagrams of towers of fibrations of $\rmS^1$-spectra}
We begin with following Lemma.
\begin{lemma}
 \label{tower.fibs} 
  Let $\rmE$ denote an $\rmS^1$-ring spectrum.
 Let $\{p_m:\cX_{m+1} \ra \cX_m|m \ge 0\}$ denote a tower of maps of $\rmE$-module spectra and let $\{q_m:\cY_{m+1} \ra \cY_m|m \ge 0\}$ denote a tower of
 {\it fibrations} of $\rmE$-module spectra.
 
 Assume that one is given, for each $m$, a map $f_m: \cX_n \ra \cY_m$, so that each of the squares
 \[\xymatrix{{\cX_{m+1}} \ar@<1ex>[r]^{p_m} \ar@<1ex>[d]^{f_{m+1}} &{\cX_m} \ar@<1ex>[d]^{f_m}\\
             {\cY_{m+1}} \ar@<1ex>[r]^{q_m} & {\cY_m}}
 \]
 homotopy commutes in the category of $\rmE$-module spectra. Then one may inductively replace each of the maps $f_m$ up to homotopy by a map $f_m'$, $m\ge 0$, so that
 the maps $\{f_m'|m \ge 0\}$ define a map of inverse systems of maps  $\{\cX_m|m\}$ and $\{\cY_m|m\}$ of $\rmE$-module spectra.
\end{lemma}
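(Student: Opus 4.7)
The plan is to construct the strictified maps $\{f_m'\}$ by induction on $m$, using the fact that each $q_m$ is a fibration of $E$-module spectra, so that homotopies in the base can be lifted to homotopies in the total object. The relevant lifting properties hold inside the model category of $E$-module spectra (see \cite{SS}, \cite{Ship04}), so that all replacements can be made equivariantly with respect to the $E$-module structure.

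First, set $f_0' = f_0$. Now suppose inductively that we have constructed maps $f_0', f_1', \ldots, f_m'$ together with homotopies $H_j : f_j \simeq f_j'$ (for $j = 0, \ldots, m$) in the category of $E$-module spectra, such that the squares
\[
\xymatrix{
\cX_{j+1} \ar[r]^{p_j} \ar[d]_{f_{j+1}'} & \cX_j \ar[d]^{f_j'} \\
\cY_{j+1} \ar[r]^{q_j} & \cY_j
}
\]
commute \emph{on the nose} for all $j < m$. To construct $f_{m+1}'$, consider the composite $q_m \circ f_{m+1} : \cX_{m+1} \to \cY_m$. By hypothesis it is homotopic to $f_m \circ p_m$, and by the inductive homotopy $H_m$, the map $f_m \circ p_m$ is homotopic to $f_m' \circ p_m$. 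Composing these homotopies gives a homotopy $K : q_m \circ f_{m+1} \simeq f_m' \circ p_m$.

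Since $q_m$ is a fibration of $E$-module spectra, the homotopy $K$ admits a lift to a homotopy $\widetilde{K} : f_{m+1} \simeq f_{m+1}'$ starting at $f_{m+1}$, whose image under $q_m$ is $K$ and whose endpoint $f_{m+1}'$ therefore satisfies $q_m \circ f_{m+1}' = f_m' \circ p_m$ strictly. Setting $H_{m+1} = \widetilde{K}$ completes the inductive step. Iterating gives the desired maps $\{f_m'\}$, which by construction assemble into a strict map of inverse systems of $E$-module spectra.

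The only subtle point is ensuring that the lifting can be carried out in the category of $E$-module spectra (so that the resulting $f_m'$ are still $E$-module maps); this is precisely what the model structures of \cite{SS} and \cite{Ship04} on module spectra over a ring spectrum provide, once one works with fibrant-cofibrant representatives, which we implicitly assume throughout following the conventions of \S\ref{basic.terminology}. No further obstruction arises, since each step of the induction only requires solving a single lifting problem against the fibration $q_m$.
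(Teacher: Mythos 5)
Your proof is correct and takes essentially the same route as the paper: set $f_0'=f_0$, then at each inductive step use the fibration $q_m$ to lift a homotopy defined on a cylinder object $\cX_{m+1}\times\Delta[1]$, extracting $f_{m+1}'$ as the far endpoint of the lifted homotopy. In fact your version is slightly more careful than the paper's on one point: you explicitly concatenate the given homotopy $q_m\circ f_{m+1}\simeq f_m\circ p_m$ with the inductive homotopy $H_m\colon f_m\simeq f_m'$ so as to obtain the homotopy $K\colon q_m\circ f_{m+1}\simeq f_m'\circ p_m$ whose lift actually yields $q_m\circ f_{m+1}'=f_m'\circ p_m$ (and hence the strictly commuting ladder with the primed maps). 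The paper's lifting diagram is written with the original $f_m$ rather than $f_m'$, so this concatenation step is left implicit there; your proof makes it explicit, which is the right thing to do.
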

\begin{proof}
 We will construct the replacement $f_m'$ by ascending induction on $m$, starting with $f_0'=f_0$. Assume we have 
 found replacements of the maps $f_i$, by $f_i'$, for all $0 \le i \le m$, so that (i) each of the maps $f_i'$ is homotopic to 
 the given map $f_i$ for all $0 \le i \le m$ and that the diagram
 \be \begin{equation}
 \label{repl.diagrm}
 \xymatrix{{\cX_m} \ar@<1ex>[d]^{f_m'} \ar@<1ex>[r]^{p_m} &{ \cdots } \ar@<1ex>[r]^{p_1} & {\cX_0} \ar@<1ex>[d]^{f_0'}\\
             {\cY_m} \ar@<1ex>[r]^{q_m} & {\cdots} \ar@<1ex>[r]^{q_1} &{\cY_0}}
 \end{equation} \ee
strictly commutes. Let $\rmH: \cX_{{\it m}+1} \times \Delta [1] \ra \cY_{\it m}$ denote a homotopy so that,
$\rmH(x, 1) ={\it q}_{{\it m}+ 1} \circ {\it f}_{{\it m}+1}$ and $\rmH({\it x_m}, 0) = {\it f_{m}} \circ {\it p}_{{\it m}+1}$, for all $x \eps \cX_{{\it m}+1}$. 
Observe that all we need is that $\cX_{{\it m}+1} \times \Delta [1]$ be a cylinder object for $\cX_{{\it m}+1}$ in the sense of \cite[Definition 1.2.4]{Hov}.
Now we consider the diagram:
\[\xymatrix{{\cX_{m+1}} \ar@<1ex>[r]^{f_{m+1}} \ar@<-1ex>[d]^{i_1} & {\cY_{m+1}} \ar@<-1ex>[d]^{q_{m+1}}\\
          {\cX_{m+1} \times \Delta [1]} \ar@<1ex>[r]^{\rmH} \ar@<1ex>[ur]^{\rmH'} & {\cY_{m}}\\
          {\cX_{m+1}} \ar@<1ex>[u]_{i_0} \ar@<1ex>[r]^{p_{m+1}} &{\cX_m} \ar@<1ex>[u]_{f_m}}\]
Here the homotopy $\rmH'$ is a lifting of the given homotopy $\rmH$: observe that such a lifting $\rmH'$ exists because 
the map $i_1$ is a trivial cofibration and $q_{m+1}$ is a fibration. 
\vskip .1cm
Now we will replace the map $f_{m+1}$ by the map $f_{m+1}'=\rmH' \circ i_0$. Then one may observe that $\rmH'$
provides a homotopy between the two maps $f_{m+1}$ and $f_{m+1}'$. Moreover, the diagram 
\[\xymatrix{{\cX_{m+1}} \ar@<1ex>[d]^{f_{m+1}'} \ar@<1ex>[r]^{p_{m+1}} &{\cX_m} \ar@<1ex>[d]^{f_m'} \ar@<1ex>[r]^{p_m} &{ \cdots } \ar@<1ex>[r]^{p_1} & {\cX_0} \ar@<1ex>[d]^{f_0'}\\
             {\cY_{m+1}} \ar@<1ex>[r]^{q_{m+1}} &{\cY_m} \ar@<1ex>[r]^{q_m} & {\cdots} \ar@<1ex>[r]^{q_1} &{\cY_0}}\]
strictly commutes.
Clearly, one may repeat the above arguments to complete the proof.
\end{proof}
\begin{lemma}
 \label{tower.1}
 Let $\rmE$ denote an $\rmS^1$-ring spectrum. Let 
 \be \begin{equation}
   \label{cone.diagm.1}
 \xymatrix{{\rmA_1} \ar@<1ex>[r]^{i_1} \ar@<1ex>[d]^{p_1^{A}} & {\rmX_1} \ar@<1ex>[d]^{p_1} \ar@<1ex>[r]^{f_1} &{\rmY_1} \ar@<1ex>[d]^{q_1}\\
           {\rmA_0} \ar@<1ex>[r]^{i_0} &{\rmX_0}  \ar@<1ex>[r]^{f_0} & {\rmY_0}}
\end{equation} \ee
denote a strictly commutative diagram of $\rmE$-module spectra, so that the composite maps $f_1 \circ i_1$ and $f_0 \circ i_0$ are null-homotopic. Then the
 resulting diagram
 \be \begin{equation}
 \label{cone.diagm.2}
 \xymatrix{{\rmX_1 \sqcup_{\rmA_1} {\it c}\rmA_1} \ar@<1ex>[r] \ar@<1ex>[d] & {\rmY_1} \ar@<1ex>[d]\\
             {\rmX_0 \sqcup_{\rmA_0} {\it c}\rmA_0} \ar@<1ex>[r] & {\rmY_0}} 
 \end{equation} \ee
homotopy commutes in the category of $\rmE$-module spectra, where $c\rmA_i$ denotes the cone on $\rmA_i$, $i=0, 1$.
\end{lemma}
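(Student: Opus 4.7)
The strategy is to produce the horizontal arrows in \eqref{cone.diagm.2} from chosen null-homotopies of $f_1\circ i_1$ and $f_0\circ i_0$, and then to verify homotopy commutativity of the resulting square summand by summand on the pushout $C_i := \rmX_i \sqcup_{\rmA_i} c\rmA_i$.

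First, using the hypothesis, fix null-homotopies $H_i : c\rmA_i \to \rmY_i$ of $f_i \circ i_i$ for $i = 0, 1$. The universal property of the pushout then assembles each pair $(f_i, H_i)$ into a map $\tilde f_i : C_i \to \rmY_i$, giving the two horizontal arrows in \eqref{cone.diagm.2}. The left vertical arrow $P : C_1 \to C_0$ is the strictly induced map of pushouts coming from $p_1$, $p_1^{A}$, and $c(p_1^{A})$; it is well-defined since the left square of \eqref{cone.diagm.1} commutes on the nose.

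To compare $q_1 \circ \tilde f_1$ with $\tilde f_0 \circ P$, one restricts to each summand of $C_1$. On $\rmX_1 \subset C_1$, strict commutativity of the right square of \eqref{cone.diagm.1} gives $q_1 f_1 = f_0 p_1$, so the two compositions agree on the nose. On the cone summand $c\rmA_1 \subset C_1$, they yield the maps $q_1 \circ H_1$ and $H_0 \circ c(p_1^{A})$, each of which restricts along $\rmA_1 \hookrightarrow c\rmA_1$ to the common map $q_1 f_1 i_1 = f_0 i_0 p_1^{A} : \rmA_1 \to \rmY_0$. Hence both are null-homotopies of the same map, and the task reduces to connecting them by a homotopy relative to $\rmA_1$.

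Arranging this compatibility is where the main work lies: a priori the set of null-homotopies of a given map is a torsor under $[\Sigma \rmA_1, \rmY_0]_\rmE$, so nothing prevents an arbitrary pair $(H_0, H_1)$ from disagreeing. The plan is to fix $H_1$ arbitrarily and then construct $H_0$ so that $H_0 \circ c(p_1^{A}) = q_1 \circ H_1$ on the nose. Since $f_0 \circ i_0$ and $q_1 \circ H_1$ agree on $\rmA_1$, they glue into a single map $\rmA_0 \sqcup_{\rmA_1} c\rmA_1 \to \rmY_0$ out of the pushout. After replacing $p_1^{A}$ by a cofibration via a mapping cylinder in the model structure on $\rmE$-module spectra of \cite{Ship04} (and re-establishing the strict commutativities of \eqref{cone.diagm.1} after this replacement), the induced map $\rmA_0 \sqcup_{\rmA_1} c\rmA_1 \hookrightarrow c\rmA_0$ becomes a cofibration, so the homotopy extension property produces the desired $H_0$. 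With this compatible choice, the two compositions $q_1 \tilde f_1$ and $\tilde f_0 P$ coincide strictly on both summands of $C_1$, which in particular gives homotopy commutativity of \eqref{cone.diagm.2}. The hard part will be carrying out the cofibrant replacement without disturbing the strict commutativities used to define $P$; once that is in place, the remainder is a straightforward summand-by-summand check.
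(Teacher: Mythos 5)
You correctly isolate the real issue (the two null-homotopies $q_1\circ H_1$ and $H_0\circ c(p_1^{\rmA})$ need not agree), but the step meant to resolve it --- ``the homotopy extension property produces the desired $H_0$'' --- is precisely where your argument fails. The map you need to extend is the glued map $g\colon \rmA_0\sqcup_{\rmA_1}c\rmA_1={\rm Cone}(p_1^{\rmA})\to \rmY_0$, and the target $c\rmA_0$ of the (cofibration) ${\rm Cone}(p_1^{\rmA})\to c\rmA_0$ is contractible; for a cofibration with contractible target an extension exists (even up to homotopy) if and only if $g$ itself is null-homotopic. HEP only rigidifies an extension that already exists up to homotopy; it does not create one. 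And the null-homotopy of $g$ is exactly the obstruction you began with: since $g|_{\rmA_0}=f_0\circ i_0$ is null-homotopic, the exact sequence $[\Sigma\rmA_0,\rmY_0]\xrightarrow{(\Sigma p_1^{\rmA})^*}[\Sigma\rmA_1,\rmY_0]\xrightarrow{\delta^*}[{\rm Cone}(p_1^{\rmA}),\rmY_0]\to[\rmA_0,\rmY_0]$ shows $[g]=\delta^*(w)$, where $w\in[\Sigma\rmA_1,\rmY_0]$ is the difference class between $q_1\circ H_1$ and the restriction along $p_1^{\rmA}$ of any null-homotopy of $f_0\circ i_0$; so $[g]=0$ iff $w$ lies in the image of $(\Sigma p_1^{\rmA})^*$, which nothing in the hypotheses guarantees for an arbitrary $H_1$. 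In other words, the existence of a strictly (or even homotopy-) compatible $H_0$ is the very statement you are trying to prove, so the key step is circular, and for a fixed arbitrary $H_1$ such an $H_0$ need not exist. (The cofibrant replacement, which you flag as the hard part, is in fact the routine part.)

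For comparison, the paper does not attempt to make the null-homotopies compatible. It reduces, using the strict commutativity of ~\eqref{cone.diagm.1} on the $\rmX$-part, to comparing the two restrictions to $c\rmA_1$, namely $q_1\circ \rmH_1$ and $\rmH_0\circ c p_1^{\rmA}$, and observes that each of these is a map defined on a cone and hence null-homotopic, via the explicit contraction $\tilde\rmH_j(a,t,s)=\rmH_j(a,ts)$; since left homotopy is an equivalence relation on maps out of cofibrant objects, the two restrictions are homotopic to one another, and this is how the homotopy commutativity of ~\eqref{cone.diagm.2} is deduced there. If you want to keep your strategy, you would have to either verify the vanishing of the difference class $w$ (i.e., show the given null-homotopies can be chosen compatibly in the situation at hand) or fall back on an argument of the paper's type; as written, the central extension step is unjustified.
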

\begin{proof} Recall that the diagram ~\eqref{cone.diagm.1} commutes strictly. Therefore, it suffices to show
 that the square
 \be \begin{equation}
  \label{cone.diagm.3}
  \xymatrix{{c\rmA_1} \ar@<1ex>[r]^{\rmH_1} \ar@<1ex>[d]^{cp_1^{\rmA}} & {\rmY_1} \ar@<1ex>[d]^{q_1}\\
            {c\rmA_0} \ar@<1ex>[r]^{\rmH_0} & {\rmY_0}}
\end{equation} \ee
commutes up to homotopy, where $\rmH_j: c\rmA_j \ra \rmY_j$, $j=0,1$, are the maps induced by the given null-homotopies of
the composite maps $f_j\circ i_j$, $j=0, 1$. 
\vskip .1cm
Let $\tilde \rmH_j:c\rmA_j \times \Delta[1] \ra \rmY_j$, $j=0,1$, be the maps defined as $\tilde \rmH_j(a_j, t, s) = \rmH_j(a_j, ts)$,
$a_j \eps \rmA_j$, $t_j, s \eps \Delta [1]$. Here we also assume $\rmH_j(a_j, 0) =*$ for all $a_j \eps \rmA_j$, and that
$\rmH_j(\quad, 1)$ is the given composite map $\rmA_j {\overset {\it i_j} \ra} \rmX_j {\overset {\it f_j} \ra }\rmY_j$. 
\vskip .1cm
Therefore, $\tilde \rmH_j$ is a homotopy from the given $\rmH_j: c\rmA_j \ra \rmY_j$ to the trivial map sending everything to the
 base point $*$. It follows that $q_1 \circ \tilde \rmH_1$ is a homotopy from $q_1 \circ \rmH_1$ to the trivial map $c\rmA_1 \ra \rmY_0$.
 Similarly, $\tilde \rmH_0 \circ cp_1^{\rmA}$ is a homotopy from $\rmH_0 \circ p_0$ to the trivial map $c\rmA_1 \ra \rmY_0$. 
 Observe that the above notion of homotopy is actually a {\it left homotopy} in the sense of \cite[Definition 1.2.4]{Hov}.
 Making use of the injective model structures, all our presheaves of spectra are cofibrant, so that the relation of two maps being {\it left-homotopic} 
 is an equivalence relation: see \cite[Proposition 1.2.5 (iii)]{Hov}. Therefore, this proves the Lemma.
\end{proof}

\vskip .2cm 

\end{document}